\let\oldsection=
\renewcommand{\subsection}[1]{\par\vspace{.18in}\noindent\addtocounter{subsection}{1}\setcounter{equation}{0}{\bf\thesubsection\hspace{9pt}#1}}
\newtheorem{thm}{Theorem}[subsection]
\let\c@fact\c@theorem\makeatother\newtheorem{lem}[thm]{Lemma}
\newtheorem{cor}[thm]{Corollary}
\newtheorem{prop}[thm]{Proposition}
\theoremstyle{definition}
\newtheorem{defn}[thm]{Definition}
\newtheorem{rem}[thm]{Remark}
\newtheorem{rems}[thm]{Remarks}
\numberwithin{equation}{subsection}
\numberwithin{thm}{section}
\newcommand{\hd}{{\text{\rm hd}}}
\newcommand{\wnabla}{{\widetilde\nabla}}
\newcommand{\wrnabla}{{{\widetilde\nabla}}_{\text{\rm red}}}
\newcommand{\wF}{{\widetilde F}}
\newcommand{\wOmega}{{\widetilde\Omega}}
\newcommand{\grHom}{\text{\rm hom}}
\newcommand{\grExt}{\text{\rm ext}}
\newcommand{\opH}{{\text{\rm H}}}
\newcommand{\wfa}{\widetilde{\mathfrak a}}
\newcommand{\wrDelta}{{\widetilde{\Delta}}^{\text{\rm red}}}
\newcommand{\wX}{{\widetilde{X}}}
\newcommand{\ck}{{\text{\rm coker}}}
\newcommand{\wXi}{{\widetilde\Xi}}
\newcommand{\wgr}{\widetilde{\text{\rm gr}}\,}
\newcommand{\coker}{{\text{\rm coker}}}
\newcommand{\wrad}{\widetilde{\text{\rm rad}}\,}
\newcommand{\sU}{{\mathscr U}}
\newcommand{\whQ}{\widehat Q}
\newcommand{\gr}{\text{\rm gr}}
\newcommand{\wGr}{{\widetilde{\text{\rm Gr}}\,}}
\newcommand{\sA}{{\mathscr A}}
\newcommand{\Ext}{{\text{\rm Ext}}}
\newcommand{\fa}{{\mathfrak a}}
\newcommand{\Amod}{A\mbox{--mod}}
\newcommand{\Hom}{\text{\rm Hom}}
\newcommand{\sE}{\operatorname{{\mathscr E}}}
\newcommand{\soc}{\operatorname{soc}}
\newcommand{\sO}{{\mathscr{O}}}
\newcommand{\rad}{\operatorname{rad}}
\newcommand{\Dist}{\operatorname{Dist}}
\newcommand{\rDelta}{\Delta^{\text{\rm red}}}
\newcommand{\rnabla}{\nabla_{\text{\rm red}}}
\newcommand{\wM}{{\widetilde{M}}}
\newcommand{\wA}{{\widetilde{A}}}
\newcommand{\wB}{{\widetilde{B}}}
\newcommand{\wR}{{\widetilde{R}}}
\newcommand{\wU}{{\widetilde U}}
\newcommand{\wDelta}{{\widetilde{\Delta}}}
\newcommand{\wN}{{\widetilde{N}}}
\newcommand{\wP}{{\widetilde{P}}}
\newcommand{\wQ}{{\widetilde{Q}}}
\newcommand{\wJ}{{\widetilde{J}}}
\newcommand{\Jan}{\Gamma_{\text{\rm Jan}}}
\newcommand{\wL}{{\widetilde{L}}}
\newcommand{\sD}{{\mathscr{D}}}
\newcommand{\Tor}{{\text{\rm Tor}}}
\newcommand{\Gmod}{{G{\text{\rm --mod}}}}
\newcommand{\blist}{\begin{list}{\rom{(\roman{enumi})}}{\setlength
{\leftmarg in}{0em} \setlength{\itemindent}{7ex}
\setlength{\labelsep}{2ex}\setlength{\listparindent}{\parindent}
\usecounter{enumi}}}
\newcommand{\elist}{\end{list}}
\begin{document}
\begin{abstract} Let $G$ be a semisimple algebraic group over an algebraically closed field $k$ of
positive characteristic $p$. Under some restrictions on the size of $p$ (which in some cases require
validity of the Lusztig character formula), the present paper establishes
new results on the $G$-module structure of $\Ext^\bullet_{G_1}(V,W)$ when $V,W$ belong to several
important classes of rational $G$-modules, and $G_1$ denotes the first Frobenius kernel of $G$. For example,
it is proved that, if $L,L'$ are ($p$-regular) irreducible $G_1$-modules, then $\Ext^n_{G_1}(L,L')^{[-1]}$ has a good
filtration with computable multiplicities. This and many other results depend on the entirely new technique of using methods
of what we call forced gradings
 in the representation theory of $G$, as developed by the authors in   \cite{PS9}, \cite{PS10} and \cite{PS11}, and extended here.

 In addition to providing proofs, these methods lead effectively to a new conceptual framework for the study
 of rational $G$-modules, and, in this context, to the introduction of a new class of graded finite
 dimensional algebras, which we call Q-Koszul algebras. These algebras are similar to Koszul algebras, but
 are quasi-hereditary, rather than semisimple, in grade 0. \end{abstract}

 \title[New graded methods in the homological algebra of semisimple groups]{New graded methods in the homological algebra of semisimple algebraic groups}\author{Brian J. Parshall}
\address{Department of Mathematics \\
University of Virginia\\
Charlottesville, VA 22903} \email{bjp8w@virginia.edu {\text{\rm
(Parshall)}}}
\author{Leonard L. Scott}
\address{Department of Mathematics \\
University of Virginia\\
Charlottesville, VA 22903} \email{lls2l@virginia.edu {\text{\rm
(Scott)}}}

\thanks{Research supported in part by the National Science
Foundation}
\maketitle
\section{Introduction}
Let $G$ be a semisimple, simply connected algebraic group over an algebraically closed field $k$ of
positive characteristic $p$. The irreducible rational $G$-modules $L(\lambda)$ are indexed by the
set $X(T)_+$ of dominant weights. When $\lambda\in X(T)_+$, $L(\lambda)$ occurs as the
head (resp., socle) of the Weyl module $\Delta(\lambda)$ (resp., dual Weyl module $\nabla(\lambda)$).
The structure and cohomology of the modules $\Delta(\lambda)$ and $\nabla(\lambda)$, for all $\lambda\in X(T)_+$,
occupy a central place in the modular representation theory of semisimple groups. To give a recent
example, write $\lambda=\lambda_0+p\lambda_1$,
where $\lambda_0$ is a restricted dominant weight and $\lambda_1$ is dominant, and define $\Delta^p(\lambda):=L(\lambda_0)
\otimes \Delta(\lambda_1)^{[1]}$, where $\Delta(\lambda_1)^{[1]}$ denotes the Frobenius ``twist" of $\Delta(\lambda_1)$.  In 1980, Jantzen \cite{Jan} asked
if any Weyl module $\Delta(\lambda)$ has a $\Delta^p$-filtration, i.~e., a filtration by $G$-submodules with sections $\Delta^p(\gamma)$, for various $\gamma\in X(T)_+$.  In \cite{PS11}, the authors answered positively Jantzen's question under the hypothesis that the Lusztig character formula (LCF) holds, and $p\geq 2h-2$ is odd, where
$h$ is the Coxeter number of $G$.
The LCF is known to hold for very large $p$
depending on the root system of $G$ (see \cite{AJS} and \cite{F}).\footnote{Williamson \cite{Wil} has recently posted results stating that the original Lusztig conjecture with its proposed
bound 
$p\geq h$ can fail for primes $p$ of this size---i.~e., without stronger conditions on $p$. 
 Williamson has stated that $p\geq f(h)$
is insufficient when $f(h)$ is linear in $h$, and he even proposes that a sufficient $f(h)$ must be
exponential in $h$. To put this in perspective, however, the Weyl group order of $SL_n$ is $n!=h!$
which is exponential in $h$, but not ``huge" in the sense of the sufficient bounds on $p$ given by Fiebig \cite{F}.} When
it holds, the modules $\Delta^p(\lambda)$ (resp., $\nabla_p(\lambda)$), $\lambda\in X(T)_+$, identify with certain
modules $\rDelta(\lambda)$ (resp., $\rnabla(\lambda)$) arising from ``reduction mod $p$" of the quantum
enveloping algebra at a $p$th root of unity associated to $G$; see \S2.4. This connection with quantum enveloping algebras plays an essential role in \cite{PS11}, fitting in well with the new forced-graded methods developed by the authors there and in \cite{PS9} and \cite{PS10}.

The present paper builds on
 these methods, extending their scope from the module structure theory to the study of homological
 resolutions. Many new results for the homological algebra of rational $G$-modules emerge, as well as some promising forced-graded structures. Before elaborating on the latter,  we briefly mention three specific new results.

  First,
  let $G_1$ be the first Frobenius kernel
of $G$. The representation theory of $G_1$ coincides with the representation theory of the restricted enveloping algebra $u=u({\mathfrak g})$ of $G$. Given rational $G$-modules $V,W$, the
spaces $\Ext^n_{G_1}(V,W)$, $n\geq 0$, carry the natural structure of twisted $G$-modules, that is,
the natural action of $G_1$ through its containment $G_1\subset G$ is trivial.
Except
in special cases, e.~g., $n=1$, little is known about the structure of the untwisted $G$-modules $\Ext^n_{G_1}(V,W)^{[-1]}$,
even when $V$ and $W$ are taken to be irreducible, Weyl or dual Weyl modules. Now assume that
$p\geq 2h-2$ is odd and that the LCF holds for $G$. Let $V=L$ and $W=L'$  be
irreducible $G_1$-modules. Then $L\cong L(\lambda)|_{G_1}$ and $L'\cong L(\mu)|_{G_1}$ for restricted dominant weights $\lambda,\mu$ which we assume are $p$-regular.  Theorem \ref{nextmainresult} establishes that $\Ext^n_{G_1}(L(\lambda),L(\mu))^{[-1]}$ has a ``good" or  $\nabla$-filtration---that is, a filtration by
 $G$-submodules with sections of the form $\nabla(\gamma)$, for  various $\gamma\in X(T)_+$. In addition,
 the multiplicity of any $\nabla(\gamma)$
as a section in this filtration can be combinatorially determined in terms of coefficients of Kazhdan-Lusztig polynomials for the affine Coxeter group of $G$; see Theorem \ref{calcthm}.\footnote{The conclusion of Theorem \ref{nextmainresult}  may fail if $p$ is small.
 For example, if $G$ has type $F_4$
and $p=2$, then according to \cite[4.11]{Sin94},
$$\Ext^1_{G_1}(L(0), L(\varpi_2))^{[-1]}
\cong L(0)\oplus L(\varpi_1),$$
does not have a $\nabla$-filtration, since $L(\varpi_1)\not=\nabla(\varpi_1)$. We thank Peter Sin for
pointing out his paper to us. See also David Stewart \cite{Stew}, which largely extends Sin's $F_4$ calculations
to twisted $F_4$.} To our knowledge,  Theorem \ref{nextmainresult} and
Theorem \ref{calcthm} give the first general results in the literature on the $G$-module structure of $\Ext_{G_1}$-groups between
irreducible modules.

Second, Theorem \ref{Jantzen} proves, under the same assumptions about $p$, that, given any $p$-regular weight
$\lambda\in X(T)_+$, restricted dominant  weight $\mu$,  and integer $n\geq 0$, the rational $G$-modules $\Ext^n_{G_1}(\Delta(\lambda),L(\mu))^{[-1]}$ and  $\Ext^n_{G_1}(L(\mu),\nabla(\lambda))^{[-1]}$ both have $\nabla$-filtrations. Again, the multiplicities of any $\nabla(\gamma)$ can be determined in terms
of Kazhdan-Lusztig polynomial coefficients; see Theorem \ref{calcthm2}.\footnote{Theorem \ref{Jantzen} is suggested by the work \cite{KLT} of Kumar, Lauritzen, and Thomsen (improving
earlier work \cite{AJ} of Andersen and Jantzen), showing that, if $p>h$, then $\opH^n(G_1,\nabla(\lambda))^{[-1]}=\Ext^n_{G_1}(k,\nabla(\tau))$ always has a
$\nabla$-filtration. Our result, although it presently requires much larger values of $p$, considerably extends this result and rests on entirely different methods. The general question asking, given a rational $G$-module $V$, whether $H^n(G_r,V)^{[-r]}$
has a $\nabla$-filtration goes back at least to Donkin's paper \cite{Don87} who conjectured a positive answer
if $V$ has
a $\nabla$-filtration. Counterexamples were later given by van der Kallen \cite{vdK93}. }

Third, let $\fa$ be the sum of the $p$-regular blocks in the restricted enveloping algebra $u$ of $G$.
When $p>h$ and the LCF holds, an important result proved in \cite{AJS} establishes that $\fa$ is a Koszul algebra, and so, in particular, it has a natural positive grading. The positive grading exists, inherited from 
the quantum analogue of $\fa$, without the LCF assumption; see \cite{PS10}. Theorem \ref{DeltaKoszul}
proves that, given any $\nu\in X_{\text{\rm reg}}(T)_+$, the Weyl module $\Delta(\nu)$ has the structure of a graded $\fa$-module, provided  $p\geq 2h-2$ is odd. If, in addition, the LCF holds, this graded structure is
{\it linear}. In other words, if $P_\bullet\twoheadrightarrow \Delta(\lambda)$ is a minimal
graded $\fa$-projective resolution, then $\ker(P_{i+1}\to P_i)$ is generated by its terms of grade $i+2$. This
fact plays an important role in other results in this paper on the structure of $G$-module categories (see
Corollary 3.8). In part, it grows out of a related result \cite[Thm. 10.9]{PS9}, establishing $\fa$-gradings
(but not linearity) for dual Weyl modules in some cases. But, surprisingly, it is the quantum version of \cite[Thm. 8.7]{PS9} of these results which we apply to study the $G$-module case here.

Our results on $G$-modules are modeled on (and require in a strong way) similar results for quantum enveloping algebras
at roots of unity established by the authors in \cite{PS9}.

Underlying these results are gradings forced upon the algebras controlling
the representation theory of the modules we study.  Our philosophy has been
that it is likely to be extremely difficult, if not impossible, to impose
actual positive gradings\footnote{By a slight abuse of terminology, a positively graded algebra has, by
definition,
nonzero grades only in grades $n\geq 0$.} on all of these algebras,  although (as noted above) they do exist,
under various assumptions, in the restricted enveloping algebra case. So, using filtrations related to the restricted enveloping
algebra gradings, we pass to the associated graded algebras in all cases,
thus forcing a grading.  Once this is done, we do not immediately know,
if any of the nice properties, e.~g.,  quasi-heredity, carry over to the new
graded algebras, or even if it is possible to work with them as a
substitute for the original algebras in any meaningful way. However, from the start, a recent
goal,
continued in this paper, is to show that this is possible, thereby giving a
genuinely viable alternative to finding from the start a positive grading.
Indeed, because  forced-graded structures come with built-in compatibility
properties among the different algebras used, there are
advantages to using them over actual gradings on the original algebras.
There are, of course, some disadvantages. In particular, except in the case
of the restricted enveloping algebra, there is no general ``forget the
grading" functor that allows passage back to  the original algebras and
modules.  However,  such a forgetful functor does exist for some graded
modules,  including all those which are completely reducible for the
restricted enveloping algebra. We are able to use this
functor to communicate from the forced-graded setting back to the original module
categories. The results discussed above, as well as our $p$-Weyl filtration
result \cite{PS11}, demonstrate the success of this approach, providing
genuinely new advances in the structure and homological
algebra of algebraic groups through proofs relying on forced-graded
constructions.\footnote{For a survey of some of the literature in graded representation theory, see the introduction to \cite{PS9}
as well as \cite{PS9a}.}

The three results discussed above were chosen because the statements involve only the
classical language of algebraic groups. But once the forced-graded
 framework is in
place, many further results may be stated. Immediately,  we observe from \cite{PS10} that the new
graded algebras $\wgr A$ associated to standard quasi-hereditary algebras $A=A_\Gamma$ (associated to
a finite  ideal $\Gamma$ of $p$-regular dominant weights) are themselves quasi-hereditary. Their Weyl
modules    $\wgr \Delta(\lambda)$ are forced-graded versions of the Weyl modules $\Delta(\lambda)$ for $A$.  Moreover, the
present paper shows in  Theorem 5.3(b) and Theorem \ref{GExt} ({\it both not} assuming the LCF) that there are graded isomorphisms
$$\begin{cases}\Ext^\bullet_{G}(\Delta(\lambda),\rnabla(\mu))\cong\Ext^\bullet_A(\Delta(\lambda),\rnabla(\mu))\cong
\Ext^\bullet_{\wgr A}(\wgr \Delta(\lambda),\rnabla(\mu)),\\
\Ext^\bullet_G(\rDelta(\lambda),\rnabla(\mu))\cong\Ext^\bullet_A(\rDelta(\lambda),\rnabla(\mu))
\cong\Ext^\bullet_{\wgr A}(\rDelta(\lambda),\rnabla(\mu)),\\
\Ext^\bullet_G(\rDelta(\lambda),\nabla(\mu))\cong\Ext^\bullet_A(\rDelta(\lambda),\nabla(\mu))
\cong\Ext^\bullet_{\wgr A}(\rDelta(\lambda),\wgr^\diamond\nabla(\mu)),\end{cases}
$$
  where $\wgr^\diamond\nabla(\mu)$ denotes the
dual Weyl module for $\wgr A$ of highest weight $\mu$.  {\it Accordingly, the homological algebra of important
classes of rational $G$-modules is placed in the setting of forced-graded algebras.}
 These results just assume that $p\geq 2h-2$ is odd, and regard $\wgr A$ is an ungraded algebra.
 When the LCF is assumed in addition, 
 we prove two new results in this paper
of a graded nature. First, if the ideal $\Gamma$ of
$p$-regular weights is
contained in the Jantzen region, then $\wgr A$ is a Koszul algebra (and even has a graded Kazhdan-Lusztig
theory in the sense of \cite{CPS1a};
see Corollary \ref{KLtheory}).\footnote{A weaker result was established in \cite[Thm. 10.6]{PS9}, and the present result was
promised there in Remark 10.7(a). }

Second,  with $\Gamma$ an arbitrary ideal of $p$-regular weights,
Theorem \ref{QKoszultheorem} shows that $\wgr A$ is a ``Q-Koszul algebra,"
an algebra with a new Koszul-like property defined and studied in this paper; see Definition \ref{QKoszul}. 
In addition, there is the stronger, companion notion of a ``starndard Q-Koszul algebra, also introduced
in \S3, and Theorem \ref{standardtheorem} shows that $\wgr A$ is even standard Q-Koszul under the
same hypotheses.  In part, these results rely on the observation that the algebra $(\wgr A)_0$ (the grade 0 term of
$\wgr A$) is itself quasi-hereditary with Weyl (resp., dual Weyl) modules $\rDelta(\lambda)$ (resp.,
$\rnabla(\lambda)$), $\lambda\in\Gamma$. The algebra $(\wgr A)_0$ replaces the semisimple grade 0
term in the Koszul algebra case.

The Q-Koszul and standard Q-Koszul structures
have been proved for forced-graded algebras $\wgr A$ associated to $G$ only when the characteristic
$p$ is large enough that the LCF holds. Thus, these results (as well as the good filtration results in \S5) 
are presently {\it generic} in nature. However, the authors believe that, independently of the validity of
the LCF, there are tractable versions of the algebras $\wgr A$ for smaller $p$ (including some $p<h$) which
are likely Q-Koszul.\footnote{For example, this appears to be the case for $p=2$ and with $A$ a Schur algebra $S(n,r)$, $r\leq 5$. In another direction, the Humphreys-Verma conjecture on projective indecomposable $G_1$-modules becomes a theorem, valid for all $p$, in a
forced-graded setting \cite{PS12}. At present, this conjecture is only known if $p\geq 2h-2$. This is the main reason it is assumed that $p\geq 2h-2$ in this paper.} The authors intend to return to this topic in a later paper. Also,  another sequel \cite{PS13} obtains some of the Q-Koszul results of this paper under
weaker hypotheses, but still assuming that a version the LCF holds on a given poset of weights.

Many of the main results of this paper assume the validity of the Lusztig character formula (which is presently only known to hold for very large $p$, see footnote 1).  However,  even when the LCF is assumed to hold, many results are established for
dominant weights outside the Jantzen region---giving homological and structural results not covered by the original conjecture or
its immediate consequences. In addition, some results do not assume the LCF. For example, we mention 
again the deep Theorem \ref{DeltaKoszul}(a) which shows that  standard
modules $\Delta(\lambda)$, $\lambda$ $p$-regular, have a natural graded structure for $\fa$. Here we use the (positive) grading on $\fa$ proved by the authors in \cite{PS12}, arising naturally, but non-trivially, from
quantum group considerations when $p\geq 2h-2$ is odd. Other examples include the quite satisfying
identifications of Theorem \ref{nextmainresult} and Theorem \ref{GExt}(b), described above and proved under the same
hypothesis.

\medskip
\subsection{Some Elementary Notation.}

\begin{enumerate}

\item $(K,\sO,k)$: $p$-modular system. Thus, $\sO$ is a DVR with maximal ideal ${\mathfrak m}=(\pi)$,  fraction field $K$, and residue
field $k$. An $\sO$-lattice $\wM$ is, by definition, an $\sO$-module which is free and of finite rank.
A particular $p$-modular system will be required. Let $p>0$ be a fixed odd prime.
$\sO$ will be a DVR with maximal ideal $\mathfrak m=(\pi)$, fraction field $K$ of characteristic 0, and residue field $\sO/{\mathfrak m}
\cong k=\overline{\mathbb F}_p$. We can (and will) assume that $\sO$ is complete and contains a $p$th root $\zeta\not=1$ of unity.
Let $\sA:={\mathbb Z}[v,v^{-1}]_{\mathfrak n}$,
the localization of the ring of integral Laurent polynomials in a indeterminate $v$ at the maximal ideal ${\mathfrak n}:=(v-1,p)$. Regard $\sA$ as a subring of the function field ${\mathbb Q}(v)$.  There is a
natural ring homomorphism $\sA\to\sO$,  $v\mapsto\zeta$.

\item   An $\sO$-order is an $\sO$-algebra $\wA$ which is also a $\sO$-lattice.  If $\wA$ is an $\sO$-order, then an $\wA$-lattice is, by definition, an $\wA$-module $\wM$ which is also a $\sO$-lattice. Let $\wA_K:= K\otimes_{\sO}\wA$ and
$A:=k\otimes_{\sO}\wA$. More generally, if $\wM$ is an $\wA$-module, define $\wM_K:=K\otimes_{\sO}\wM$
and $M= \wM_k:=k\otimes_{\sO}\wM$.

\item For an $\wA$-lattice $\wM$, define $\wrad^n\wM:=\wM\cap \rad^n\wM_K$, where $\rad^n\wM_K$ denotes the $n$th-radical of
the $\wA_K$-module $\wM_K$. Of course, $\rad^n\wM_K=(\rad^n\wA_K)\wM_K$.

Dually, let $\widetilde\soc^{-n}\wM:=\soc^{-n}\wM_K\cap\wM$, $n=0,1,\cdots$, where $\{\soc^{-n}\wM_K\}_{n\geq 0}$
is the socle series of $\wM_K$.

\item If $\wM$ is an $\wA$-lattice, then $\gr \wM:=\bigoplus_{n\geq 0}\wrad^n\wM/\wrad^{n+1}\wM$ is a positively graded lattice for the $\sO$-order
\begin{equation}\label{graded1}\gr\wA:=\bigoplus_{n\geq 0}\wrad^n\wA/\wrad^{n+1}\wA.\end{equation}

\item A $\wA$-lattice $\wM$ is called {\it $\wA$-tight} (or just {\it tight}, if $\wA$ is clear from context) if
\begin{equation}\label{rad}(\wrad^n\wA)\wM=\wrad^n\wM,\quad\forall n\geq 0.
\end{equation}
Clearly, if $\wM$ is $\wA$-projective, then it is tight. (Many other $\wA$-lattices are tight.)

%Looking ahead to \S2.3 (and using the notation there), when $\wB=\wA_\Gamma$ and
%$\wM$ is an $\wA_\Gamma$-lattice, then $\wM$ may also be viewed as a $\wfa$-lattice, whenever $\wfa$
%is an $\sO$-order and $\wfa\to\wA_\Gamma$ is an $\sO$-algebra homomorphism.  Fortunately, in the
%situation of \S4, the lattice $\wM$ is $\wA_\Gamma$-tight if and only if it is $\wfa$-tight, using
%\cite[Cor. 3.8]{PS11} and \cite[Cor. 4.16]{PS10}. In fact, the argument in \cite[Cor. 3.8]{PS11} shows that $%%\wrad^n\wA_\Gamma=(\wrad^n\wfa)\wA_\Gamma
%=\wA_\Gamma(\wrad^n\wfa)$, provided $p\geq 2h-2$ is odd and $\Gamma$ is non-empty ideal of $p$-%regular
%weights. Also, $\wrad^n\wM$ is physically the same submodule of $\wM$, whether
% $\wM$ is viewed as an
%$\wA_\Gamma$-module or an $\wfa$-module.

\item Now let $\wfa$ be an $\sO$-subalgebra of $\wA$. (More generally, we can assume that $\wfa$ is an
order and $\wfa\to\wA$ is a homomorphism.)  Then items (2)--(5) all make perfectly good sense
using $\wfa$ in place of $\wA$. If $\wM$ is an $\wA$-lattice, then it is an $\wfa$-lattice.
 In latter contexts (see, e.~g., \S2.3), it will usually be the case that
$(\rad^n\wfa_K)\wA_K=\rad^n\wA_K$, for all $n\geq 0$. In that case, if $\wM$ is an $\wA$-lattice,
then $\wrad^n\wM$ can be constructed viewing $\wM$ as an $\wA$-lattice or as an $\wfa$-lattice. Both
constructions lead to identical $\sO$-modules.
Ambiguities of a formal nature may still arise as to whether it is more appropriate to use $\wfa$ or $\wA$, but are
generally resolved by context. Similar remarks apply for $\gr\wM$. Often the $\wA$-tightness
of $\wM$ is the same as its $\wfa$-tightness; see \cite[Cor. 3.8]{PS11} and its elaboration at the
end of \S2.5 below.

\item Finally, suppose that $\wfa\to\wA$ is a homomorphism of $\sO$-orders. Assume that the image of
$\wfa$ is normal in $\wA$. Let $A=\wA_k$, $\fa:=\wfa_k$, and consider
an $A$-module $M$.   Define
\begin{equation}\label{gradedzoo}\begin{cases}(1)\,\gr M:=\bigoplus_{n\geq 0}(\rad^nA) M/(\rad^{n+1}A)M;\\
(2)\,\gr_\fa M:=\bigoplus_{n\geq 0}(\rad^n\fa) M/(\rad^{n+1}\fa) M;\\
(3)\,\wgr M:=\bigoplus_{n\geq 0}(\wrad^n\wfa) M/(\wrad^{n+1}\wfa) M.\end{cases}
\end{equation}
Each of these is graded modules for $\gr A$ and $\wgr A$. Though it will not often be used, (3) makes
sense when $M$ is replaced any $\wA$-lattice $\wM$, i.~e., we put $\wgr\wM:=\bigoplus_{n\geq 0}
(\wrad^n\wfa)\wM/(\wrad^{n+1}\wfa)\wM$. It will often be the case that $\wgr\wM\cong\gr\wM$, which 
implies also $\wgr M\cong(\gr\wM)_k$ if $M=\wM_k$. A necessary and sufficient condition for either of these
natural isomorphisms in the context of \S2.3 is the $\wfa$-tightness of $\wM$; see \cite[Lem. 3.5]{PS11}.
\end{enumerate}

\medskip
 For a finite dimensional algebra $A$ (over some field), let $\Amod$ be the category of all finite
 dimensional $A$-modules.
 In the rest of this paragraph assume that $A=\bigoplus_{n\geq 0}A_n$ is a positively graded algebra. Let $A$-grmod be the category of $\mathbb Z$-graded (finite dimensional) $A$-modules. Given graded $A$-modules $M,N$ and $n\in{\mathbb N}$,
 $\grExt^n_A(M,N)$ denotes the space of $n$-fold extensions computed in the category $A$-grmod. See Remark \ref{discussion} for some elementary comments on
 the existence of projective covers in $A$-grmod. When $n=0$, the space of
 homomorphisms $M\to N$ preserving grades is denoted $\grHom_A(M,N)= \grExt^0_A(M,N)$. For
 $M,N\in\Amod$ (not necessarily graded modules) and $n\in{\mathbb N}$, the space of
 $n$-fold extensions is denoted $\Ext^n_A(M,N)$. The bifunctors $\grExt^\bullet$ and $\Ext^\bullet$
 are related as follows.  If $M,N\in A$--grmod,
 then is a natural isomorphism
 \begin{equation}\label{gradedungraded}\Ext^n_A(M,N)\cong\bigoplus_{r\in\mathbb Z}\grExt^n_A(M,N\langle r\rangle ),\quad
 \forall n\in{\mathbb N}.\end{equation}
 In this expression, $N\langle r\rangle \in A$--grmod is the $r$th shift of $N$, i.~e., $N\langle r\rangle _i:=N_{i-r}$.

\section{Varia} This section collects together some useful material on several topics treated in this paper.

\subsection{Algebraic groups.}
Let $G$ be a simple, simply connected algebraic group defined and split over ${\mathbb F}_p$, where $p$ is a prime integer.\footnote{The case when $G$ is semisimple, or even reductive, is easily reduced to the case when $G$ is simple.}  Let $R$ be the root system of $G$ relative to a fixed
maximal split torus $T$. Fix a Borel subgroup $B\supset T$ with opposite Borel subgroup $B^+$ determining
a set $R^+$ of positive roots.  Given
$\lambda\in X(T)_+$ (the set of dominant weights), $\Delta(\lambda)$ (resp., $\nabla(\lambda)$) will denote the Weyl module (resp., dual Weyl module) of highest weight $\lambda$. We generally follow the
standard notation for $G$ and its representation theory as listed in \cite[pp. 569--572]{JanB} (except
that $\Delta(\lambda)$ is denoted $V(\lambda)$ and $\nabla(\lambda)$ is
 denoted $\opH^0(\lambda)$ there).\footnote{Sometimes, in the context of quasi-hereditary algebras, $\Delta(\lambda)$ and $\nabla(\lambda)$ are called the ``standard" and ``costandard" modules, respectively, of highest weight
 $\lambda$.} If $\lambda\in X(T)_+$ and $\lambda^\star:=-w_0\lambda$ (where $w_0$ is the longest
 word in the Weyl group $W$ of $G$),  then $\Delta(\lambda)$ has linear dual $\Delta(\lambda)^*\cong
 \nabla(\lambda^\star)$.

 For any affine algebraic group scheme $H$, let
 $H$--mod be the category of finite dimensional rational (left) $H$-modules. The category $H$--mod fully
 embeds into the category of finite dimensional modules for the distribution algebra $\Dist(H)$ of $H$. See \cite[Chps.
 7,8]{JanB}. In addition, if $H=G$, this embedding is an equivalence of categories; see \cite[p. 171]{JanB}.  In this case, the classical
 Kostant $\mathbb Z$-form (an ``order" of infinite rank) $\Dist_{\mathbb Z}(G):=\sU_{\mathbb Z}({\mathfrak g}_{\mathbb C})$ \cite[Ch. 7]{Hump} provides an
 integral form for $\Dist(G)$, i.~e., $\Dist(G)\cong k\otimes_{\mathbb Z}\Dist_{\mathbb Z}(G)$ (as Hopf algebras).
 For any commutative algebra $\sO$, write $\Dist_\sO(G):=\sO\otimes_{\mathbb Z}\Dist_{\mathbb Z}(G)$.
 In particular, if $\sO=K$ is a field of characteristic 0, then $\Dist_K(G)$ is the universal enveloping algebra
 of the split semisimple Lie algebra ${\mathfrak g}_K$ over $K$, having the same root system as $G$.

  For a positive integer $r$ and a rational $G$-module $V$,  $V^{[r]}$ denotes the pull-back of $V$ through the
  $r$th power $F^r$ of the Frobenius morphism $F:G\to G$.  Let $G_r$ be the scheme-theoretic  kernel of $F^r$,
 and let $G_rT$ be the pull-back of $T$ through $F^r$. For
 $\lambda\in X(T)$,  $\whQ_r(\lambda)$ denotes the injective envelope of the irreducible
$G_rT$-module $\widehat L_r(\lambda)$ of highest weight $\lambda$.

 Throughout this paper, we usually make the assumption that $p\geq 2h-2$. This means that, if $\lambda_0\in X_r(T)$ (the set of $r$-restricted dominant weights), then the $G_rT$-module structure on $\whQ_r(\lambda_0)$ extends uniquely to a rational $G$-module structure. In the special case in
which $r=1$, this rational $G$-module will be denoted by
$Q^\flat(\lambda_0)$;
 it the projective cover of $L(\lambda_0)$ in the subcategory of $\Gmod$ generated by
$L(\gamma)$, with
$\gamma\leq\lambda_0':=2(p-1)\rho+w_0\lambda_0$;  see \cite[Ch. 11]{JanB} for details. We also generally
assume that $p$ is odd, so that previous results can be easily quoted. When $p=2\geq 2h-2$, then $G=SL_2$,
which is usually easy to treat directly.

Given $\lambda\in X(T)_+$, write $\lambda=\lambda_0+p\lambda_1\in X(T)_+$,  where $\lambda_0\in X_1(T)$ and $\lambda_1\in X(T)_+$. The
indecomposable rational $G$-modules
\begin{equation}\label{generalizedQsandPs}
\begin{cases}
Q^\sharp(\lambda):=Q^\sharp(\lambda_0)\otimes\nabla(\lambda_1)^{[1]}\\
P^\sharp(\lambda):=Q^\sharp(\lambda_0)\otimes\Delta(\lambda_1)^{[1]}.
\end{cases}
\end{equation}
will play an important role.
Of course, the restrictions $Q^\sharp(\lambda)|_{G_1T}$ and $P^\sharp(\lambda)|_{G_1T}$ are injective and projective (but not
indecomposable, unless $\lambda_1=0$). By \cite[Prop. 2.3]{PS11}, $Q^\sharp(\lambda)$ (resp.,
$P^\sharp(\lambda)$) has a $\nabla$-filtration (resp., $\Delta$-filtration), namely, a filtration with
sections of the form $\nabla(\gamma)$ (resp., $\Delta(\gamma)$), for $\gamma\in X(T)_+$.

\subsection{Quantum enveloping algebras.}
Let $\widetilde U'_\zeta$ be the (Lusztig) $\sA$-form
of the quantum enveloping algebra ${\mathbb U}_v$ associated to the Cartan matrix of the root system $R$ over the function
field ${\mathbb Q}(v)$.
 Put
$$\widetilde U_\zeta=\sO\otimes_\sA U'_\zeta/\langle K_1^p-1,\cdots, K_n^p-1\rangle.$$
Finally, set $U_\zeta=K\otimes_{\sO}\widetilde U_\zeta$, so that $\widetilde U_\zeta$ is an integral
$\sO$-form of the quantum enveloping algebra $U_\zeta$ at a $p$th root of unity.
Put $\overline U_\zeta=\widetilde
U_\zeta/\pi\widetilde U_\zeta$, and let $I$ be the ideal in
$\overline U_\zeta$ generated by the images of the elements $K_i-1$,
$1\leq i\leq n$. By \cite[(8.15)]{L1},
\begin{equation}\label{hyperalgebra}
\overline U_\zeta/I\cong \Dist(G).\end{equation}
 A rational $G$-module $M$ is said to {\it lift} if there is a
$U_\zeta$-
lattice $\wM$ such that $M\cong \wM/\pi\wM$ as rational $G$-modules.

The category $U_\zeta$--mod of
finite dimensional and integrable type 1 modules is a highest weight category (in the sense
of \cite{CPS-1}) with
irreducible (resp. standard, costandard) modules $L_\zeta(\lambda)$
(resp., $\Delta_\zeta(\lambda)$, $\nabla_\zeta(\lambda)$),
$\lambda\in X(T)_+$. For $\mu\in X(T)_+$, $\text{\rm
ch}\,\Delta_\zeta(\mu)=\text{\rm ch}\,\nabla_\zeta(\mu)=\chi(\mu)$ (Weyl's character formula).

There is a surjective (Hopf) algebra homomorphism
\begin{equation}\label{twistedFrobenius}
\wF:\widetilde U_\zeta\twoheadrightarrow \Dist_\sO (G),\end{equation}
which, after base change to $K$, defines the Frobenius morphism
\begin{equation}\label{Frobenius}F:U_\zeta\twoheadrightarrow \Dist_K(G).\end{equation}
If $M$ is a module for $\Dist_K(G)$, let $M^{[1]}$ be the $U_\zeta$-module obtained by making $U_\zeta$
act through $F$. Similarly, if $\wM$ is a $\Dist_\sO(G)$-module, let $\wM^{[1]}$ be the $\wU_\zeta$-module
obtained by making $\wU_\zeta$ act through $\wF$. In particular, if $\lambda\in X(T)_+$, $\wDelta(\lambda)^{[1]}$
(resp., $\wnabla(\lambda)^{[1]}$) is the $\sU_\zeta$-module obtained from the Weyl (resp., dual Weyl)
lattice $\wDelta(\lambda)$ (resp., $\wnabla(\lambda)$) of the irreducible ${\mathfrak g}_{\mathbb C}$-module
$L_{\mathbb C}(\lambda)$ of highest weight $\lambda$.\footnote{Thus, for example, $\wDelta(\lambda):=
\Dist_\sO(G)\cdot v^+$, if $v^+\in L_K(\lambda)$ is a highest weight vector.}

The rational $G$-modules $P^\sharp(\gamma)$ and $Q^\sharp(\gamma)$ defined
in (\ref{generalizedQsandPs}) lift to $\wU_\zeta$-lattices, denoted $\wP^\sharp(\gamma)$ and
$\wQ^\sharp(\gamma)$, respectively. If $\gamma=\gamma_0$ is restricted, these modules may be
defined as the unique (up to isomorphism) $\wU_\zeta$-lattices lifting $P^\sharp(\gamma_0)$
and $Q^\sharp(\gamma_0)$. We refer ahead to the discussion following display (\ref{Lambda}) for more details. In general, for $\gamma=\gamma_0+p\gamma_1$, with $\gamma_0\in X_1(T)_+$ and $\gamma_1\in X(T)_+$, we have $\wP^\sharp(\gamma)=\wP^\sharp(\gamma_0)\otimes
\wDelta(\gamma_1)^{[1]}$ and $\wQ^\sharp(\gamma)
=\wQ^\sharp(\gamma_0)\otimes\wnabla(\gamma_1)^{[1]}$.

Let $u_\zeta$ be the small quantum enveloping algebra. It is a Hopf subalgebra of $U_\zeta$ and admits
an integral form $\widetilde u_\zeta$, which is a subalgebra of $\wU_\zeta$. As such $\widetilde u_\zeta$
is a lattice of rank $p^{\dim\mathfrak g}$. Let $u'_\zeta$ be the product of the $p$-regular blocks of $u_\zeta$
and define $\widetilde u'_\zeta:=\widetilde u_\zeta\cap u'_\zeta$. Then $\widetilde u'_\zeta$ is a direct
factor of $\widetilde u_\zeta$. In addition, $u':=k\otimes \widetilde u_\zeta'$ is the direct product of the regular
blocks in the restricted enveloping algebra $u$ of $G$.

\subsection{Finite dimensional algebras.} A dominant weight $\lambda$ is $p$-regular if $(\lambda+\rho,\alpha^\vee)\not\equiv 0$ mod$\,p$,  for all roots $\alpha\in R$. The set $X_{\text{\rm reg}}(T)_+$ of
$p$-regular dominant weights is
a poset, setting $\lambda\leq\mu\iff \mu-\lambda\in {\mathbb N}R^+$.  (There is a similar partial order
on entire set $X(T)_+$ of dominant weights, though this paper focuses
on the $p$-regular weights.)  A subset $\Gamma$ of a poset $\Lambda$ is called an ideal if $\Gamma\not=\emptyset$
and, given $\lambda\in\Lambda$ and $\gamma\in\Gamma$,  if $\lambda\leq \gamma$, then $\lambda\in\Gamma$.
Write $\Gamma\trianglelefteq\Lambda$ in this case.

To a finite ideal $\Gamma$ in $X_{\text{\rm reg}}(T)_+$, there is attached two finite dimensional algebras; the
first, denoted $A_\Gamma$, is over $k$, and the second, denoted $A_{\zeta,\Gamma}$, is over $K$. These algebras capture
some of the representation theory of $G$ and $U_\zeta$, respectively. Furthermore, $A_\Gamma$ and
$A_{\zeta,\Gamma}$ are related by an $\sO$-order
$\wA_\Gamma$ with the properties that $\wA_{\Gamma,k}=\wA_\Gamma/\pi\wA_\Gamma\cong A_\Gamma$ and $(\wA_\Gamma)_K\cong A_{\zeta,\Gamma}$.  The ``deforma tion theory" relating the representation theory
of these algebras (and their graded versions) provides
a major theme in earlier work, see \cite{PS10} and \cite{PS11}, and it is continued
in this paper. In the remainder of this subsection,
we will sketch a few details.

Given $\Gamma\subseteq X(T)_+$, let $(\Gmod)[\Gamma]$ be the full subcategory of $\Gmod$ generated by the irreducible modules $L(\gamma)$
 having highest weight $\gamma\in\Gamma$. In particular, if $\Gamma$ is a finite ideal in $X_{\text{\rm reg}}(T)_+$ (or, more generally, of $X(T)_+$), $(\Gmod)[\Gamma]$ is a highest weight category (in the sense of \cite{CPS-1}) with weight poset
 $\Gamma$.
 The category $(\Gmod)[\Gamma]$ identifies with the category $A_\Gamma$-mod of finite dimensional modules for a certain finite dimensional algebra $A_\Gamma$. Specifically, let  $I_\Gamma\trianglelefteq\Dist(G)$ be the annihilator ideal of all the modules  $V\in
 (\Gmod)[\Gamma]$. Then
 $(\Gmod)[\Gamma]\cong A_\Gamma$--mod, the category of finite dimensional $A_\Gamma$-modules, putting  $A_\Gamma:=\Dist(G)/I_\Gamma$.

 There is a similarly constructed algebra $A_{\zeta,\Gamma}$ over $K$. It has the property that $A_{\zeta,\Gamma}$--mod is isomorphic to the full subcategory of $U_\zeta$--mod generated by
 the irreducible modules $L_\zeta(\gamma)$, $\gamma\in \Gamma$. The algebras $A_\Gamma$ and
 $A_{\zeta,\Gamma}$ are related by an $\sO$-order $\wA_\Gamma$ which is defined to
 be the image of $\widetilde U_\zeta$ in $A_{\zeta,\Gamma}$. Necessarily, $\wA_\Gamma/\pi\wA_\Gamma
 \cong A_\Gamma$ and $(\wA_{\Gamma})_K\cong A_{\zeta,\Gamma}$.

 The algebras $A_\Gamma$, $A_{\zeta,\Gamma}$, and $\wA_\Gamma$ are all quasi-hereditary algebras (over
 $k$, $K$ and $\sO$, respectively) with poset $\Gamma$. For more details and properties of these algebras (as well as  of $\wfa$), see \cite{PS9}, \cite{PS10} and
 \cite{PS11}, as well as the earlier papers \cite{CPS-1} and \cite{CPS1a}.  If $\Gamma$ is an ideal in finite ideal $\Lambda$ in the poset $X_{\text{\rm reg}}(T)_+$, then there are surjective
homomorphisms $A_\Lambda\twoheadrightarrow A_\Gamma$, $A_{\zeta,\Lambda}\twoheadrightarrow A_{\zeta,\Gamma}$, and $\wA_\Lambda\twoheadrightarrow \wA_\Gamma$. This induce full embeddings
$i_*:A_\Gamma{\text{\rm --mod}}\to A_{\Lambda}{\text{\rm --mod}}$, etc. which preserve $\Ext^\bullet$-groups
(i.~e., they induce full embeddings at the level of the bounded derived categories).

 Let $\Lambda$ be any finite ideal of $p$-regular weights which contains all restricted $p$-regular weights. Assume also that if $\gamma$ is $p$-regular and restricted, then $2(p-1)\rho+w_0\gamma\in\Lambda$. Then the PIMs for $u'_\zeta$ are all $A_{\zeta,\Lambda}$-modules, so that the natural map
 $u'_{\zeta}\to A_{\zeta,\Lambda}$ is injective.  Similarly, $u'$ maps isomorphically onto its image
 in $A_\Lambda$. It follows that the (isomorphic) image $\wfa$ of $\widetilde u_\zeta'$ in $\wA_\Lambda$ is
pure in $\wA_\Lambda$.

Of course, any poset ideal $\Gamma\subseteq X_{\text{\rm reg}}(T)_+$ is contained in a poset $\Lambda$ as above, and this gives a natural map $\wfa\to\wA_\Gamma$.
By \cite[Cor. 3.9]{PS11}, $\wA_\Gamma$ is a $\wfa$-tight in the sense of (1.1)(5),  so that
$\gr\wA_\Gamma=\wgr \wA_\Gamma$; see \cite[Lem. 3.5]{PS11}. By \cite[Thm. 6.3]{PS10}, the
algebra $\gr\wA_\Gamma$ is quasi-hereditary over $\sO$. It has weight poset $\Gamma$ and standard
objects $\gr\widetilde \Delta(\gamma)=\wgr\widetilde\Delta(\gamma)$. Thus, $\wgr A_\Gamma=(\gr\wA_\Gamma)_k
=(\gr\wA_\Gamma)_k$ is also quasi-hereditary. It is important to observe that $\wgr A_\Gamma$
need {\it not} be the graded algebra $\gr A_\Gamma$ defined in (\ref{gradedzoo})(1). However, see
 Lemma \ref{ideals} below.

If $\Gamma\subseteq\Lambda$ are any finite ideals in $X_{\text{\rm reg}}(T)_+$, the surjective homomorphism
$\wA_\Lambda\twoheadrightarrow\wA_\Gamma$ above induces a surjective homomorphism
$\gr\wA_\Lambda\twoheadrightarrow\gr\wA_\Gamma$. In addition, the corresponding map
$\gr\wA_\Gamma{{\text{\rm --mod}}}\to\gr\wA_{\Lambda}{\text{\rm --mod}}$ induces a full embedding
on the corresponding derived category (and the resulting equality of $\Ext^\bullet$-groups, just as
described above in the ungraded cases. See \cite[Cor. 3.16]{PS9} for more discussion.

Another (more elementary) variant on the deformation theory described above  also will be useful,
replacing the triple $(A_{\zeta,\Gamma},\wA_\Gamma,A_\Gamma)$ by a triple $(A^\heartsuit_\Gamma,
\wA^\heartsuit_\Gamma, A_\Gamma)$.  In fact, define $A_{K,\Gamma}^\heartsuit:=\Dist_K(G)/I^\heartsuit_\Gamma$, where $I^\heartsuit_\Gamma$ is the
 annihilator in $\Dist_K(G)=U({\mathfrak g}_K)$ of the irreducible modules for ${\mathfrak g}_K$ having
 highest weights in $\Gamma$. Thus, $A^{K,\heartsuit}_\Gamma$ is a semisimple algebra over $K$ (in contrast
 to the fact that $A_{\zeta, \Gamma}$ is usually not semisimple). The image $\Dist_\sO(G)$ in $A^{K,\heartsuit}_{\Gamma,K}$ is denoted $\wA^\heartsuit_\Gamma$. It is an order over $\sO$ having the property that
 $(\wA^\heartsuit_\Gamma)/\pi\wA^\heartsuit_\Gamma\cong A_\Gamma$.

 The terminology of \S2.2 also applies in case of $\wA^\heartsuit_\Gamma$ and $A_{K,\Gamma}^\heartsuit$.
 For example, if $\wM$ is an $\wA^\heartsuit_\Gamma$-module, it is also a module for
 $\Dist_\sO(G)$-module, and then, using (\ref{twistedFrobenius}), as a module for $\wU_\zeta$, which is denoted
 $\wM^{[1]}$.

  \subsection{The Lusztig conjecture.}
For $\lambda\in X(T)_+$, the irreducible $U_\zeta$-module $L_\zeta(\lambda)$ has two important ``reductions
mod $p$" from admissible $\wU_\zeta$-lattices $\wrDelta(\lambda)$ and $\wrnabla(\lambda)$. Thus,
$\rDelta(\lambda):=\wrDelta(\lambda)/\pi\wrDelta(\lambda)$ and $\rnabla(\lambda):=\wrnabla(\lambda)/\pi
\wrnabla(\lambda)$. Both $\rDelta(\lambda)$ and $\rnabla(\lambda)$ are finite dimensional rational
$G$-modules. Rather than defining these modules explicitly, see \cite{Lin}, \cite{CPS7}, \cite{PS10}, and \cite{PS11}
for an extensive treatment. (Of course, there are other possible admissible lattices, leading to
other rational $G$-modules, but $\rDelta(\lambda)$ and $\rnabla(\lambda)$ will only be used in this
paper.) If $\lambda=\lambda_0+ p\lambda_1$, $\lambda_0\in X_1(T)$ and
$\lambda_1\in X(T)_+$, then
\begin{equation}\label{Lin}\rDelta(\lambda)\cong\rDelta(\lambda_0)\otimes\Delta(\lambda_1)^{[1]},\,\,
\rnabla(\lambda)\cong\rnabla(\lambda_0)\otimes\nabla(\lambda_1)^{[1]}.\end{equation}
See \cite[Thm. 2.7]{Lin} or \cite[Prop. 1.7]{CPS7}.

In addition, consider the rational $G$-modules $\Delta^p(\lambda):=L(\lambda_0)\otimes\Delta(\lambda_1)^{[1]}$ and $\nabla_p(\lambda):=L(\lambda_0)\otimes\nabla(\lambda_1)^{[1]}$. There are natural surjective
(resp., injective) module homomorphisms $\rDelta(\lambda)\twoheadrightarrow \Delta^p(\lambda)$ (resp.,
$\nabla_p(\lambda)\hookrightarrow \nabla_p(\lambda)$).

The following result indicates the importance of these modules to the representation theory of $G$, and,
in particular, to the validity of the Lusztig modular character formula---a specific formula conjectured to
hold for dominant weights in the Jantzen region. We do not repeat this formula here, but instead refer to \cite{L} and \cite{T}. Recall the Jantzen region is defined
\begin{equation}\label{Jregion}\Jan:=\{\lambda\in X(T)_+\,|\,(\lambda+\rho,\alpha_0^\vee)\leq p(p-h+2)\}\trianglelefteq X(T)_+.
\end{equation}

\begin{prop}\label{LC} If $p\geq 2h-3$ , then the validity of the Lusztig
modular character formula of $G$ for $p$-regular weights $\lambda\in \Jan$  is equivalent to requiring that
\begin{equation}\label{LCF} \rDelta(\lambda)\cong\Delta^p(\lambda),\quad\forall\lambda\in X_{\text{\rm reg}}(T)_+.\end{equation}
\end{prop}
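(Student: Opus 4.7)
The plan is to reduce the proposition, in three steps, to a character equality that is controlled by the known quantum LCF at a $p$-th root of unity. First, under the hypothesis $p \geq 2h - 3$, every restricted dominant weight $\lambda_0 \in X_1(T)$ lies in the Jantzen region, since $\lambda_0 \leq (p-1)\rho$ in the dominance order yields
\[
(\lambda_0 + \rho, \alpha_0^\vee) \leq (p\rho, \alpha_0^\vee) = p(h-1) \leq p(p-h+2).
\]
Hence LCF validity for all $p$-regular weights in $\Jan$ is equivalent (via the Steinberg tensor product theorem) to its validity for all restricted $p$-regular $\lambda_0 \in X_1(T)$. Second, any $p$-regular $\lambda = \lambda_0 + p\lambda_1 \in X(T)_+$ has $\lambda_0$ automatically $p$-regular (since $p$-regularity depends only on $\lambda + \rho$ mod $p$), and (\ref{Lin}) together with the definition of $\Delta^p$ gives
\[
\rDelta(\lambda) \cong \rDelta(\lambda_0) \otimes \Delta(\lambda_1)^{[1]}, \qquad \Delta^p(\lambda) = L(\lambda_0) \otimes \Delta(\lambda_1)^{[1]},
\]
so condition (\ref{LCF}) for all $p$-regular $\lambda$ is equivalent to $\rDelta(\lambda_0) \cong L(\lambda_0)$ for every restricted $p$-regular $\lambda_0$ (forward by specializing to $\lambda_1 = 0$; reverse by tensoring with the Frobenius twist of $\Delta(\lambda_1)$).

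Third, I would use the natural surjection $\rDelta(\lambda_0) \twoheadrightarrow \Delta^p(\lambda_0) = L(\lambda_0)$ (from the excerpt, taking $\lambda_1 = 0$) to reduce the module isomorphism $\rDelta(\lambda_0) \cong L(\lambda_0)$ to the character equality $\ch \rDelta(\lambda_0) = \ch L(\lambda_0)$. Since $\rDelta(\lambda_0) = \wrDelta(\lambda_0)/\pi\wrDelta(\lambda_0)$ is the mod-$p$ reduction of an admissible $\wU_\zeta$-lattice in $L_\zeta(\lambda_0)$, its formal character coincides with $\ch L_\zeta(\lambda_0)$. By the proved quantum LCF at the root of unity $\zeta$ (Kazhdan--Lusztig, Kashiwara--Tanisaki, Lusztig), this character is given by precisely the Kazhdan--Lusztig combinatorics predicted by the modular LCF. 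Therefore $\ch \rDelta(\lambda_0) = \ch L(\lambda_0)$ holds if and only if the modular LCF is valid for $L(\lambda_0)$, closing the chain of equivalences.

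The main obstacle is not a calculation but the appeal to this deep external input: the proved quantum LCF at a $p$-th root of unity is the bridge identifying $\ch \rDelta(\lambda_0)$ with the modular LCF character prediction, and without it the character equality step breaks. A secondary technical point is confirming that the natural map $\rDelta(\lambda_0) \twoheadrightarrow L(\lambda_0)$ genuinely surjects onto $L(\lambda_0)$, but this is forced by $\rDelta(\lambda_0)$ being a highest-weight module of highest weight $\lambda_0$ with one-dimensional $\lambda_0$-weight space, so that $L(\lambda_0)$ must appear as an irreducible quotient. With these in hand, the three reductions chain together to yield the proposition.
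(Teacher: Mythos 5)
The paper gives no proof here---it refers the reader to \cite[Cor.~2.5]{PS11}---so what follows is a judgment of your reconstruction on its own terms. You have assembled the right ingredients: the inequality $p\geq 2h-3$ placing every restricted weight in $\Jan$; the reduction of condition (\ref{LCF}) to restricted weights via (\ref{Lin}); the natural surjection $\rDelta(\lambda_0)\twoheadrightarrow L(\lambda_0)$; and the character identification $\ch\rDelta(\lambda_0)=\ch L_\zeta(\lambda_0)$ fed through the (proved) quantum Lusztig formula. These are indeed the moving parts any proof of this proposition must turn on.

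The one place you are more cavalier than you should be is the opening assertion that the LCF for all $p$-regular $\lambda\in\Jan$ is equivalent, ``via the Steinberg tensor product theorem,'' to the LCF on restricted $p$-regular weights. One implication is just your inclusion $X_1(T)\subseteq\Jan$. The converse---that knowing $\ch L(\lambda_0)$ for restricted $\lambda_0$ forces the full Kazhdan--Lusztig prediction for $\ch L(\lambda)$ at every $\lambda\in\Jan$---rests on the compatibility of the affine KL character formula with the tensor product decomposition, a genuine combinatorial fact and not a one-line consequence of Steinberg alone. You can and should sidestep it. For $\lambda=\lambda_0+p\lambda_1\in\Jan$, the defining inequality $(\lambda+\rho,\alpha_0^\vee)\leq p(p-h+2)$, together with $(\lambda_0+\rho,\alpha_0^\vee)\geq h-1>0$, forces $(\lambda_1+\rho,\alpha_0^\vee)\leq p$, so $\lambda_1$ lies in the closure of the bottom $p$-alcove and $\Delta(\lambda_1)=L(\lambda_1)$; hence $\Delta^p(\lambda)=L(\lambda_0)\otimes L(\lambda_1)^{[1]}\cong L(\lambda)$ by Steinberg. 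Then for $p$-regular $\lambda\in\Jan$,
\[
\text{LCF for }\lambda \iff \ch L(\lambda)=\ch L_\zeta(\lambda)=\ch\rDelta(\lambda)\iff \rDelta(\lambda)\cong L(\lambda)=\Delta^p(\lambda),
\]
the last step via the surjection. Finally, passing between ``all $p$-regular $\lambda\in\Jan$'' and ``all $p$-regular $\lambda$'' on the right-hand side is exactly your (\ref{Lin}) reduction to restricted weights---precisely the remark the paper itself makes immediately after the statement. This rearranged chain isolates the single heavy external input, the quantum character formula, and avoids any separate appeal to Steinberg--Kazhdan--Lusztig compatibility.
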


  See
\cite[Cor. 2.5]{PS11} for the proof. It should be remarked that (\ref{LCF}) holds for all $p$-regular weights if and only if it holds for $p$-regular weights in $\Jan$.  (In addition, if (\ref{LCF}) holds then it also holds, for all $\lambda\in X(T)_+$, not
just at the $p$-regular weights by an elementary translation functor argument.)

The lemma below will be important. It is a consequence of some basic Kazhdan-Lusztig theory \cite{CPS1a}
and homological properties of the modules $\rDelta(\gamma)$ and $\rnabla(\gamma)$, $\gamma\in X_{\text{\rm reg}}(T)_+$.  Write $\gamma=w\cdot \gamma'$ where $w$ belongs to the affine Weyl group $W_p
=W\ltimes p{\mathbb Z} R$ of $G$, and $\gamma'$ belongs to the anti-dominant alcove $C_p^-$ containing $-2\rho$. Then
put $l(\gamma):=l(w)$ (Coxeter length).  It will be convenient to work inside the bounded derived category $\sD:=D^b(\Gmod)$
of $\Gmod$.  Let $[1]$ be shifting functor on $\sD$. If $m>0$, $[m]:=\underbrace{[1]\circ\cdots \circ [1]}_{m}$ (with the standard convention if $m<0$).
The category contains $\Gmod$ as a fully embedded subcategory. For $M,N\in\Gmod$,
$\Ext^n_G(M,N)=\Hom_\sD^n(M,N)=\Hom_\sD(M,N[n])$. We also need the full subcategories $\sE^R$
and $\sE^L$ of $\sD$.  For example, let $\sE_0^R$ be the full subcategory of $\sE$ consisting of
objects which are isomorphic to direct sums $\nabla(\gamma)[r]$, with $r\equiv l(\gamma)$ mod$\,\,2$. Having
defined $\sE^R_i$, define $\sE_{i+1}^R$ to be the full, strict subcategory of $\sD$ consisting of objects $X$
for which there is a distinguished triangle $Y\to X\to Z\to$, with $Y,Z\in \sE^R_i$. Let $\sE^R:=\bigcup_{i\geq 0}
\sE^R_i$. The dual subcategory $\sE^L$ is defined analogously, replacing the $\nabla(\gamma)$ by
$\Delta(\gamma)$.

\begin{lem}\label{ERlemma} Assume that $p\geq 2h-3$ and that condition (\ref{LCF}) holds. Let
$M,N\in \Gmod$. Assume that $M$ or $M[1]$ belongs to $\sE^R$, and that $N$ or $N[1]$ belongs
to $\sE^L$. (Thus, the composition factors of $M$ and $N$ all have $p$-regular highest weights.) For any $\lambda\in X_{\text{\rm reg}}(T)_+$, the natural maps
 \begin{equation}\label{ER}
 \begin{cases}(1)\quad\Ext^n_G(\rDelta(\lambda),M)\longrightarrow\Ext^n_G(\Delta(\lambda),M)\\
(2)\quad \Ext^n_G(N,\rnabla(\lambda))\longrightarrow\Ext^n_G(N,\nabla(\lambda))\end{cases}\end{equation}
 are surjective, for all $n\geq 0$.\end{lem}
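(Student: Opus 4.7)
The plan is to prove (1) by d\'evissage on $M$ along the filtration $\sE^R=\bigcup_{i\geq 0}\sE^R_i$, and (2) by the entirely dual d\'evissage along $\sE^L=\bigcup_{i\geq 0}\sE^L_i$. First I would pin down the two natural maps. Under the LCF (together with $p\geq 2h-3$), $\rDelta(\lambda)\cong L(\lambda_0)\otimes\Delta(\lambda_1)^{[1]}$ has head $L(\lambda)$, so the canonical surjection $\Delta(\lambda)\twoheadrightarrow\rDelta(\lambda)$ exists and induces the map in (1); dually $\rnabla(\lambda)\hookrightarrow\nabla(\lambda)$ induces the map in (2).

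For the base case of (1), take $M=\nabla(\gamma)[r]\in\sE^R_0$. By the classical Ext-orthogonality between Weyl and dual Weyl modules,
\[
\Ext^n_G(\Delta(\lambda),\nabla(\gamma)[r])=\Ext^{n+r}_G(\Delta(\lambda),\nabla(\gamma))=\delta_{n+r,0}\,\delta_{\lambda,\gamma}\,k .
\]
Whenever the right-hand side vanishes, surjectivity is automatic. In the remaining case $n+r=0$ and $\lambda=\gamma$, the target is spanned by the composition $\Delta(\lambda)\twoheadrightarrow L(\lambda)\hookrightarrow\nabla(\lambda)$, which factors through $\rDelta(\lambda)$ since the latter also surjects onto $L(\lambda)$; thus the distinguished generator lifts to an element of $\Hom_G(\rDelta(\lambda),\nabla(\lambda))$. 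Arbitrary direct sums in $\sE^R_0$ are handled termwise.

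For the inductive step, if $M\in\sE^R_{i+1}$ fits into a distinguished triangle $Y\to M\to Z\to Y[1]$ with $Y,Z\in\sE^R_i$, applying the pair of functors $\Ext^\bullet_G(\rDelta(\lambda),-)\to\Ext^\bullet_G(\Delta(\lambda),-)$ yields a commutative ladder of long exact sequences. By induction, the vertical maps with argument $Y$ in degrees $n$ and $n+1$ and with argument $Z$ in degree $n$ are surjective, and a routine five-lemma-style chase then produces surjectivity of the vertical map with argument $M$ in degree $n$. The alternative ``$M[1]\in\sE^R$'' is handled by the same argument after reindexing, since $\Ext^n_G(X,M)=\Ext^{n-1}_G(X,M[1])$. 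Part (2) follows by the dual argument, using the base case $N=\Delta(\gamma)[r]\in\sE^L_0$ and the inclusion $\rnabla(\lambda)\hookrightarrow\nabla(\lambda)$ in place of the surjection $\Delta(\lambda)\twoheadrightarrow\rDelta(\lambda)$.

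I expect the only real subtlety to be identifying precisely where the LCF hypothesis enters: it is LCF (with $p\geq 2h-3$) that guarantees $\rDelta(\lambda)$ has head $L(\lambda)$, and hence that the generator of $\Hom_G(\Delta(\lambda),\nabla(\lambda))$ genuinely factors through $\rDelta(\lambda)$ in the base case. The inductive diagram chase and the reduction for shifted modules are routine; the Kazhdan–Lusztig parity condition $r\equiv l(\gamma)\pmod 2$ built into $\sE^R_0$ plays no essential role in the surjectivity argument itself, entering only through the closure properties of $\sE^R$ under distinguished triangles.
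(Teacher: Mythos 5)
Your base case is correct, and the overall d\'evissage strategy is the same as the paper's. But the inductive step does not close as written, and the reason is precisely the parity condition you dismiss at the end. Consider the ladder of long exact sequences obtained from the triangle $Y\to M\to Z\to Y[1]$ and the natural transformation $\Ext^\bullet_G(\rDelta(\lambda),-)\to\Ext^\bullet_G(\Delta(\lambda),-)$. To conclude surjectivity at the $M$ position in degree $n$, the relevant four-lemma needs the vertical map at the $Z$ position in degree $n$ and at the $Y$ position in degree $n$ to be \emph{surjective}, and the vertical map at the $Y$ position in degree $n+1$ to be \emph{injective}. Your inductive hypothesis only ever furnishes surjectivity. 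Surjectivity at $Y$ in degree $n+1$ is not a substitute: after choosing a preimage $a'\in\Ext^n(\rDelta(\lambda),Z)$ of $\beta'(b)$, one can only show that the boundary image of $a'$ in $\Ext^{n+1}(\rDelta(\lambda),Y)$ lies in the kernel of the vertical map, not that it vanishes, and the chase stalls. (A small abstract example -- three zero terms over two copies of $k$ -- shows that "three surjectivities" genuinely do not imply the fourth.)

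The paper closes this gap using the Kazhdan--Lusztig parity structure that is built into $\sE^R_0$ and $\sE^L_0$ via the normalization $r\equiv l(\gamma)\pmod 2$. From this normalization, $\Hom^n_\sD(\sE^L,\sE^R)$ vanishes for $n$ odd (and hence, after accounting for the possible extra shift $[1]$, both $\Ext^n_G(\Delta(\lambda),M)$ and $\Ext^n_G(\rDelta(\lambda),M)$ can only be nonzero in degrees of a single fixed parity determined by $l(\lambda)$ and the parity class of $M$). Because $Y$, $M$, $Z$ all lie in the same parity class, the troublesome term $\Ext^{n+1}(\Delta(\lambda),Y)$ automatically vanishes whenever $\Ext^n(\Delta(\lambda),M)\not=0$; the exactness of the bottom row then forces $\beta'$ to be onto $\Ext^n(\Delta(\lambda),Z)$, and the chase does close. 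To get the parity constraint on the $\rDelta(\lambda)$ side the paper invokes $\rDelta(\lambda)[-l(\lambda)]\in\sE^R$ and $\rnabla(\lambda)[-l(\lambda)]\in\sE^R$ (from \cite[Thm.\ 6.8(a)]{CPS7}, which is where the hypotheses $p\geq 2h-3$ and (\ref{LCF}) enter in an essential way -- not merely in the identification $\rDelta(\lambda)\cong\Delta^p(\lambda)$ used in your base case). So your closing remark -- that the parity condition "plays no essential role in the surjectivity argument" -- has it exactly backwards: the parity condition is what makes the long exact sequence argument work, and dropping it leaves a genuine gap.
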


 \begin{proof} First, consider statement (\ref{ER})(1).
  It is more convenient to prove (\ref{ER})(1)
 allowing $M$ to be an arbitrary object in $\sE^R$ or $\sE^R[1]$ (rather than just a rational $G$-module).
 The condition $p\geq 2h-3$ means that the restricted dominant weights are contained in the Jantzen
 region $\Jan$. Thus, since (\ref{LCF})  holds,  \cite[Thm. 6.8(a)]{CPS7} implies
 that $\rDelta(\lambda)[-l(\lambda)]\in{\mathscr E}^R$ and $\rnabla(\lambda)[-l(\lambda)]\in\sE^R$,
 for all $\lambda\in X_{\text{\rm reg}}(T)_+$.  Also, (\ref{ER})(1) holds trivially (using \cite[Lem. 2.2]{CPS7}) in case $M=\nabla(\xi)[r]$, for
 some integer $r$. Thus, (\ref{ER}(1)) is valid for $M$ or $M[1]$ in $\sE^R_1$. Now assume that $M$
 or $M[1]$ belongs to $\sE^R_{i+1}$ and the surjectivity of (\ref{ER})(1) holds with $M$ replaced by objects in $\sE^R_i$ or $\sE^R[1]$, $i\geq 1$.
 But there is a distinguished triangle $X\to M\to Y\to$ in which $X$ or $X[1]$ (resp., $Y$ or $Y[1]$)] belongs to $\sE^R_i$, so that surjectivity holds with $M$ replaced
 by $X$ or $Y$. Now a standard long exact sequence argument (see the proof of \cite[Thm. 4.3]{CPS1a})
 completes the argument for (\ref{ER})(1).

 The argument for the dual statement (\ref{ER}(2)) is similar
 and is left to the reader.
   \end{proof}

\subsection{Graded structures.}\label{gradedstructures}
  Suppose $B=\bigoplus_{n\geq 0}B_n$ is a positively graded finite dimensional algebra over a field. Let $M$ be in
the category $B$-grmod of $\mathbb Z$-graded $B$-modules. A resolution\footnote{\label{gradeconvention} In this resolution, the
graded $B$-module
$R_i$ has cohomological degree $-i$. For an integer $j$, $R_{i,j}$ denotes the $j$th grade of $R_i$; thus,
$R_i=\bigoplus R_{i,j}$.}
$$\cdots\longrightarrow R_2\longrightarrow R_1\longrightarrow R_0\longrightarrow M\longrightarrow 0$$
in $B$-grmod  is
called {\it $B$-linear} (or just linear, if $B$ is understood) if, for each nonnegative integer $n$,  the graded
$B$-module $R_n$  generated by its term $R_{n,n}$ in grade $n$. (In particular, $M$ is generated by its
grade 0-component $M_0$.)
Call the graded $B$-module $M$ {\it resolution linear}, or just {\it linear}, if it has a linear projective resolution.\footnote{It is possible to define other useful notions of linearity, e.~g., using graded Ext groups.
While such Ext considerations play a role in this paper, there is no need here for a special terminology for
them. 
  In the Koszul case, these notions all coincide. See the next footnote. } (For the structure of
projective objects in $B$-grmod, see Remark \ref{discussion} in \S8 (Appendix I).)  We remark that every such
linear projective resolution is automatically linear and thus uniquely determined. The algebra $B$ is a (finite dimensional) {\it Koszul
algebra} provided every irreducible $B$-module (regarded as a graded module concentrated in grade 0) is
resolution linear.  In this case, the subalgebra $B_0$ is necessarily semisimple.\footnote{When $B$ is a 
Koszul algebra, a graded $B$-module $M$ is resolution linear if and only if 
$\grExt^n_{B}(M,L\langle r\rangle)\not=0\implies n=r$, for all irreducible $B$-modules $L$ (concentrated
in grade $0$) and all $n\in\mathbb N$, $r\in\mathbb Z$. }

Finally, we mention that the  definitions above of linear resolutions and  modules easily carry
 over
to graded lattices over a graded order (such as $\wfa$ defined below). We leave further details to the reader.

If $p>h$, the sum $\wfa_K=u'_{\zeta,K}$ of the regular blocks in the small quantum group $u_\zeta$ is known to be Koszul \cite{AJS}. Let
$\wfa_K=\bigoplus_{i\geq 0}\wfa_{K,i}$ be the associated Koszul grading. By \cite[\S8]{PS10}, the $\sO$-algebra
$\wfa$ has a positive grading $\wfa=\bigoplus_{i\geq 0} \wfa_i$ such that, for any $i\geq 0$, $K\wfa_i=\wfa_{K,i}$.  Notice this implies that $\wfa\cong\wgr\wfa$.
Putting $\fa_i=k\otimes \wfa_i$,
\begin{equation}\label{grading} \fa=\bigoplus_{r\geq 0} \fa_i\end{equation}
provides a positive grading of the $p$-regular part $u'$ of the restricted enveloping algebra of $G$, for all
$p>h$. Also, $\fa\cong\wgr\fa$. In case (\ref{LCF}) holds for $G$ with $p>h$, then, by \cite{AJS}, the algebras $u'$ and $u'_\zeta$ are Koszul.

Given a finite  ideal $\Gamma$ in $X_{\text{\rm reg}}(T)_+$, any
projective $\wA=\wA_\Gamma$-module is $\wfa$-tight in the sense of (1.1)(5). In particular, $\wA$ is itself  $\wfa$-tight, as is
any projective $\wA$-lattice. See \cite[Cor. 3.9]{PS11}. If $\wX$ is a $\wA$-lattice, it is $\wfa$-tight if and only if it is $\wA$-tight, by
\cite[Cor. 3.8]{PS11}. 
(The quoted result, as stated, applies to $\wA_\Lambda$ for a poset $\Lambda$, which may be assumed
to contain $\Gamma$. In particular, $\wA$ is $\wA_\Lambda$-tight, and now the definitions show that $\wA$-tightness of $\wX$ is equivalent to $\wA_\Lambda$-tightness, and thus to $\wfa$-tightness.)
Thus, in this case, $\gr\wX=\wgr\wX$. {\it In particular, $\wgr\wA=\gr\wA$.} A similar argument, varying the poset, gives the tightness of $\wDelta(\gamma)$, $\gamma\in\Gamma$, and $\wgr\wDelta(\lambda)=
\gr\wDelta(\lambda)$.

\subsection{The Jantzen region.} The following result concerns the quasi-hereditary algebras
$\wgr A_\Gamma$.

\begin{lem}\label{ideals}Assume that $p\geq 2h-2$ is odd, and that (\ref{LCF}) holds. Let $\Gamma\trianglelefteq
\Jan$ consist of $p$-regular weights. Then (in the notation of (\ref{gradedzoo}))
$$\gr A_\Gamma=\gr_\fa A_\Gamma=\wgr A_\Gamma$$
and
$$\gr \Delta(\gamma)=\gr_\fa\Delta(\gamma)=\wgr \Delta(\gamma),\quad\forall\gamma\in\Gamma.$$
\end{lem}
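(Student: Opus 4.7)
The strategy is to derive both chains of equalities from a single algebra-level identity $\rad^n A_\Gamma = (\rad^n \fa) A_\Gamma$, valid for all $n \geq 0$. Granted this, the equality $\gr A_\Gamma = \gr_\fa A_\Gamma$ is immediate from the definitions, and the Weyl-module case $\gr \Delta(\gamma) = \gr_\fa \Delta(\gamma)$ follows by multiplying on the right by $\Delta(\gamma)$. The remaining identification $\gr_\fa M = \wgr M$ is essentially formal: by \S2.5, $\wfa$ carries a positive grading $\wfa = \bigoplus_{i \geq 0} \wfa_i$ with $\wrad^n \wfa = \bigoplus_{i \geq n} \wfa_i$; under the standing hypotheses ($p \geq 2h-2$ odd and LCF), $\fa = u'$ is Koszul \cite{AJS}, so the image of $\wrad^n \wfa$ in $\fa$ is precisely $\bigoplus_{i \geq n} \fa_i = \rad^n \fa$. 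Hence $(\wrad^n \wfa) M = (\rad^n \fa) M$ for any $k$-module $M$.

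For the easy inclusion $(\rad^n \fa) A_\Gamma \subseteq \rad^n A_\Gamma$: for any $p$-regular $\lambda = \lambda_0 + p\lambda_1 \in \Gamma$ the restricted part $\lambda_0$ is also $p$-regular, so $L(\lambda_0)|_\fa$ is irreducible, and therefore $L(\lambda)|_\fa$ is a direct sum of copies of a single $\fa$-irreducible. Thus $\rad \fa$ annihilates every irreducible $A_\Gamma$-module, giving $\rad \fa \subseteq \rad A_\Gamma$ and hence $(\rad^n \fa) A_\Gamma \subseteq \rad^n A_\Gamma$.

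The reverse inclusion is the main obstacle, and is where the Jantzen-region hypothesis is essential. For $\lambda = \lambda_0 + p\lambda_1 \in \Gamma \subseteq \Jan$, the defining inequality $(\lambda + \rho, \alpha_0^\vee) \leq p(p-h+2)$ combined with $(\lambda_0 + \rho, \alpha_0^\vee) \geq h - 1$ forces $(\lambda_1 + \rho, \alpha_0^\vee) \leq p$, placing $\lambda_1 + \rho$ in the closure of the fundamental $p$-alcove; for such $\lambda_1$ the Weyl module $\Delta(\lambda_1)$ is already irreducible. Since $\lambda_0$ is restricted and $p$-regular, Proposition \ref{LC} gives $\rDelta(\lambda_0) = L(\lambda_0)$, so (\ref{Lin}) yields
\[
\rDelta(\lambda) \;=\; \rDelta(\lambda_0) \otimes \Delta(\lambda_1)^{[1]} \;=\; L(\lambda_0) \otimes L(\lambda_1)^{[1]} \;=\; L(\lambda)
\]
for every $\lambda \in \Gamma$. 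As the standard modules of the quasi-hereditary algebra $(\wgr A_\Gamma)_0 = A_\Gamma / (\rad \fa) A_\Gamma$ are the $\rDelta(\lambda)$, $\lambda \in \Gamma$, this algebra is semisimple. Since $A_\Gamma / \rad A_\Gamma$ is the largest semisimple quotient of $A_\Gamma$, it follows that $\rad A_\Gamma = (\rad \fa) A_\Gamma$.

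The extension to higher $n$ relies on the normality of $\fa$ in $A_\Gamma$: because $\fa$ is a normal Hopf subalgebra of $A_\Gamma$ (inherited from the normality of $u \subseteq \Dist(G)$), the $A_\Gamma$-adjoint action acts by algebra automorphisms on $\fa$ and hence preserves $\rad \fa$, yielding $A_\Gamma \cdot \rad \fa = \rad \fa \cdot A_\Gamma$. Iterating the case $n = 1$,
\[
\rad^n A_\Gamma \;=\; (\rad A_\Gamma)^n \;=\; \bigl((\rad \fa) A_\Gamma\bigr)^n \;=\; (\rad \fa)^n A_\Gamma \;=\; (\rad^n \fa) A_\Gamma.
\]
Combined with the automatic identification $\gr_\fa = \wgr$ from the first paragraph, this completes the proof for $A_\Gamma$; the Weyl-module statement follows by applying the identity to $\Delta(\gamma)$ on the right and then invoking $\gr_\fa \Delta(\gamma) = \wgr \Delta(\gamma)$.
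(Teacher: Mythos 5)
Your overall reduction to the single identity $\rad^n A_\Gamma = (\rad^n\fa)A_\Gamma$ matches the paper's strategy, and your route to $\gr_\fa = \wgr$ via Koszulity (rather than the paper's citation of {\rm [Cor.~5.6, PS11]}) is a legitimate shortcut under the LCF hypothesis. However, your treatment of the crucial inclusion $\rad A_\Gamma \subseteq (\rad\fa)A_\Gamma$ contains a genuine gap. You argue that since the standard modules $\rDelta(\lambda)$ of the quasi-hereditary algebra $(\wgr A_\Gamma)_0$ are all irreducible, the algebra is semisimple. This implication is false for general quasi-hereditary algebras: the algebra of $2\times 2$ lower-triangular matrices over $k$, with poset $\{1<2\}$, has $\Delta(1)=L_1$ and $\Delta(2)=P(2)=L_2$ both irreducible, yet it is not semisimple (since $\nabla(2)$ is two-dimensional). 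Semisimplicity requires that the costandard modules be irreducible as well, whereupon $\Ext^1_{(\wgr A_\Gamma)_0}(L(\lambda),L(\mu)) = \Ext^1(\rDelta(\lambda),\rnabla(\mu)) = 0$ forces semisimplicity. In your setting this extra fact is true and the fix is short: the dual formulas in (\ref{Lin}) together with the LCF give $\rnabla(\lambda) = \rnabla(\lambda_0)\otimes\nabla(\lambda_1)^{[1]} = L(\lambda_0)\otimes L(\lambda_1)^{[1]} = L(\lambda)$ for $\lambda\in\Jan$, by the very same alcove computation you used for $\rDelta$; alternatively, you could invoke the duality $\mathfrak d$ (discussed before Theorem \ref{standardtheorem}), which fixes irreducibles and swaps $\rDelta$ with $\rnabla$.

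It is worth noting that the paper's own argument is more elementary here and avoids this pitfall entirely: rather than analyzing the quasi-hereditary structure of $(\wgr A_\Gamma)_0$, it shows directly that an $A_\Gamma$-module is completely reducible over $G$ if and only if it is completely reducible over $u'$, by writing a $G_1$-completely-reducible $M$ as $\Hom_{G_1}(L,M)\otimes L \cong N^{[1]}\otimes L$ and using the Jantzen-region bound to place the highest weights of $N$ in the closure of the bottom alcove. Both arguments pivot on the same numerical estimate $(\lambda_1+\rho,\alpha_0^\vee)\leq p$; yours packages the consequence at the level of standard modules of $(\wgr A_\Gamma)_0$, while the paper's works directly with Steinberg decomposition. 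Your approach, once the costandard case is supplied, is a perfectly good alternative and perhaps fits more naturally with the Q-Koszul framework developed in \S3.
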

\begin{proof}
 We first claim that
 \begin{equation} \label{equality} (\rad\fa) A_\Gamma =\rad A_\Gamma.\end{equation}
  Observe $A_\Gamma$-modules are the same as finite dimensional rational $G$-modules which have composition factors
 $L(\gamma)$, for $\gamma\in\Gamma$.  Thus, to prove (\ref{equality}), it's enough to
 show that, given $M$ in $\Gmod$, $M$ is completely reducible for $G$ if and only if it is completely
 reducible for $u'$.   Because irreducible $G$-modules are completely reducible
 for the restricted enveloping algebra $u$ (or equivalently, for $G_1$), the ``$\implies$" direction is obvious. Conversely, assume that $M$ is completely reducible
for $G_1$. Let $L:=\bigoplus L(\lambda_i)$ be the direct sum of the distinct irreducible $G$-modules having
restricted highest weights which, as $G_1$-modules, appear with nonzero
multiplicity in $M|_{G_1}$. Then
$$\Hom_{G_1}(L,M)\otimes L\overset\sim\longrightarrow M, \quad f\otimes x\mapsto f(x)$$
is an isomorphism of rational $G$-modules. Also, $\Hom_{G_1}(L,M)\cong N^{[1]}$, for a
rational $G$-module $N$. (See \cite[3.16(1)]{JanB}.) Thus, if $L(\tau)$ is a $G$-composition factor
of $N$, then $L(\lambda_i\otimes p\tau)$ is a composition factor of $M$. Thus, by hypothesis, $\lambda_i\otimes p\tau\in\Jan$.  A easy calculation shows that $(\tau+\rho,\alpha_0^\vee)\leq p$, i.~e., $\tau$ belongs to the
closure of the bottom $p$-alcove $C_p$ of $G$. Thus, $L(\tau)\cong\Delta(\tau)\cong\nabla(\tau)$, so that
$N$ is a completely reducible $G$-module because $\Ext^1_G(\Delta(\tau),\nabla(\sigma))=0$
for any $\tau,\sigma\in X(T)_+$.  This proves our claim.

By (\ref{equality}), $(\rad^n\fa) A_\Gamma =\rad^nA_\Gamma $, for all nonnegative integers $n$. This implies that
$\gr A_\Gamma =\gr_\fa A_\Gamma $. On the other hand, $\gr_\fa A_\Gamma =\wgr A_\Gamma $ by \cite[Cor. 5.6]{PS11}. This proves
the first assertion of the lemma. For the second assertion, $\rad^n\Delta(\gamma):=(\rad^n A_\Gamma)\Delta(\gamma)=(\rad^n\fa)\Delta(\gamma)$, so that $\gr\Delta(\gamma)=\gr_\fa\Delta(\gamma)=\wgr\Delta(\gamma)$, as before.
\end{proof}

\section{Q-Koszulity.} Q-Koszul  algebras are
introduced in Definition \ref{QKoszul} of this section. Let $\Lambda$ be an arbitrary finite ideal of $p$-regular
dominant weights, and let $B=\wgr A_\Lambda$ be the algebra defined in \S2.3. Then, under favorable circumstances---which, for the present,
means that $p\geq 2h-2$ is odd and the LCF condition (\ref{LCF}) holds---Theorem \ref{QKoszultheorem} states that $B$ is Q-Koszul.
Its proof is postponed to \S5.
Next,
Definition \ref{standard}  formulates the notion of a ``standard" Q-Koszul algebra, while Theorem \ref{standardtheorem} proves that the algebras $B$ are also standard Q-Koszul
algebras. When $\Lambda$ is contained in the Jantzen region, Corollary \ref{KLtheory} states that
$B$-mod has a graded Kazhdan-Lusztig theory (in the sense of \cite[\S3]{CPS1}). The proofs of these
last two results are presented at the end of \S6. Thus, when $p\geq 2h-2$ is odd, and when (\ref{LCF}) holds, the following picture emerges: the graded algebras $B$  which "model" the representation
theory of $G$ (on $p$-regular weights) are (standard) Koszul inside the Jantzen region $\Jan$, but then become (standard)
Q-Koszul as the weight poset $\Lambda$ expands outside $\Jan$. Ultimately, we expect something similar to hold for small primes, and also for
$p$-singular weights.

   Suppose that $B=\bigoplus_{n\geq 0}B_n$ is a positively graded quasi-hereditary algebra with poset $\Lambda$.    Since $B$ is quasi-hereditary, there is an increasing (``defining") sequence
$0=J_0\subseteq J_1\subseteq J_2\subseteq \cdots\subseteq J_n=B$ of idempotent ideals of
$B$ with the following property: for $1\leq i\leq n$, $J_i/J_{i-1}$ is a heredity ideal in the
algebra $B/J_{i-1}$.\footnote{See \cite{CPS-1}, \cite{CPS1a} and
\cite[\S C.1]{DDPW} for further details. Recall that an idempotent ideal $J$ in a finite dimensional algebra $A$ (over
the field $k$) is heredity provided that, writing $J=AeA$, for an idempotent $e$, the centralizer
algebra $eAe$ is semisimple and multiplication $Ae\otimes_{eAe}eA\longrightarrow AeA=J$
is an isomorphism (of vector spaces).} Because $B$ is graded, \cite[Prop. 4.2]{CPS1a} says that
the idempotent ideals $J_i$ are homogeneous; in fact, $J_i=Be_iB$ for some idempotent $e_i\in B_0$.

Each standard module
$\Delta(\lambda)$, $\lambda\in\Lambda$, has a natural positive grading, described as follows.  $\Delta(\lambda)$ is a projective
(ungraded) module for an appropriate quotient algebra $B/J_i$---it identifies with the projective cover
of $L(\lambda)$ in $B/J_{i-1}$-mod. By the previous paragraph, $B/J_i$ is also a graded quasi-hereditary algebra. Therefore, $\Delta(\lambda)$ is the projective cover in the
$B/J_{i-1}$-grmod of the irreducible module $L(\lambda)$ (viewed as a graded $B/J_{i-1}$-module having pure
grade 0).  See Remark \ref{discussion} in \S8 (Appendix I) for more discussion of PIMs in $B$-grmod.

We have the following elementary result. See also \cite[Cor. 3.2]{PS11}.

\begin{prop}\label{B0}
(a) Suppose $B=\bigoplus_{n\geq 0}B_n$ is a positively graded quasi-hereditary algebra with poset $\Lambda$. Then
the subalgebra $B_0$ is quasi-hereditary with poset $\Lambda$.

(b)  In the special case that $\Lambda$ is a finite ideal of $p$-regular dominant weights, put $B:=\wgr A_\Lambda$. Then the modules $\rDelta(\lambda)$ (resp., $\rnabla(\lambda)$), $\lambda\in\Lambda$, are the standard (resp., costandard) modules
for the quasi-hereditary algebra $B_0$. In particular,  the rational $G$-modules $\rDelta(\lambda)$ and $\rnabla(\lambda)$  are naturally modules for all three algebra $B$, $B_0$ and $A_\Lambda$, all acting through
the common quotient algebra $B_0$. \end{prop}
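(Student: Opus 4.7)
The plan for (a) is to truncate the defining sequence of heredity ideals to grade zero. By \cite[Prop.~4.2]{CPS1a}, since $B$ is graded and quasi-hereditary, a defining sequence may be chosen homogeneous: $0=J_0\subset J_1\subset\cdots\subset J_n=B$ with $J_i=Be_iB$ and each $e_i\in B_0$ an idempotent. Then the chain $0=(J_0)_0\subset(J_1)_0\subset\cdots\subset(J_n)_0=B_0$, with $(J_i)_0=B_0e_iB_0$ (positivity of the grading forces $(Be_iB)_0=B_0e_iB_0$), is a defining heredity sequence for $B_0$. Two points need to be checked: first, the centralizer $\bar e_i(B/J_{i-1})\bar e_i$ is semisimple by heredity and, being positively graded, concentrated in grade $0$, hence equal to $\bar e_iB_0/(J_{i-1})_0\bar e_i$; second, the heredity multiplication isomorphism $\bar B\bar e_i\otimes_{\bar e_i\bar B\bar e_i}\bar e_i\bar B\xrightarrow{\sim}J_i/J_{i-1}$ restricts to grade $0$ to produce $\bar B_0\bar e_i\otimes_{\bar e_i\bar B_0\bar e_i}\bar e_i\bar B_0\xrightarrow{\sim}(J_i/J_{i-1})_0$, since under a positive grading the only tensor pair contributing to grade $0$ is $(0,0)$. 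The same idempotents label the simples of $B$ and of $B_0$, so the two algebras share the poset $\Lambda$.

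For (b), part (a) gives that $B_0$ is quasi-hereditary with poset $\Lambda$ and simples $\{L(\lambda):\lambda\in\Lambda\}$. The first step is to show that $\rDelta(\lambda)$ and $\rnabla(\lambda)$ are $B_0$-modules, i.e.\ that $\wrad\wfa$ (acting through $\wfa\hookrightarrow\wA_\Lambda$ and reduction mod $\pi$) annihilates them. Using the Steinberg decomposition $\rDelta(\lambda)=\rDelta(\lambda_0)\otimes\Delta(\lambda_1)^{[1]}$ together with the vanishing of the augmentation ideal of $u_\zeta$ under the Frobenius $\wF:\wU_\zeta\to\Dist_\sO(G)$, the Hopf coproduct action of $\wrad\wfa$ on the tensor product reduces (via the counit identity) to an action only on the first factor; so it suffices to prove $(\wrad\wfa)\rDelta(\lambda_0)=0$. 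This in turn follows from the quantum Steinberg irreducibility of $L_\zeta(\lambda_0)$ over $u_\zeta$ for restricted $p$-regular $\lambda_0$, which gives $(\rad u'_\zeta)L_\zeta(\lambda_0)=0$; the lattice embedding $\wrDelta(\lambda_0)\hookrightarrow L_\zeta(\lambda_0)$ then forces $(\wrad\wfa)\wrDelta(\lambda_0)=0$, and reducing mod $\pi$ yields the desired vanishing on $\rDelta(\lambda_0)$. The argument for $\rnabla(\lambda)$ is dual. Both modules have head (resp.\ socle) $L(\lambda)$ and composition factors $L(\mu)$ with $\mu\le\lambda$, so each is naturally a quotient (resp.\ submodule) of the $B_0$-standard $\Delta^{B_0}(\lambda)$ (resp.\ costandard $\nabla^{B_0}(\lambda)$).

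To upgrade this to equality, I plan to combine the orthogonality $\Hom_{B_0}(\rDelta(\lambda),\rnabla(\mu))=\Hom_G(\rDelta(\lambda),\rnabla(\mu))=k\cdot\delta_{\lambda\mu}$---valid because every $G$-morphism between $B_0$-modules is automatically $B_0$-linear---with a deformation argument lifting to the integral form $(\wgr\wA_\Lambda)_0$, whose generic fibre $(A_{\zeta,\Lambda})_0$ is controlled by the quantum Koszulity of $u'_\zeta$ (which holds for $p>h$ by \cite{AJS}) and whose standards deform to $\sO$-lattices whose reductions mod $\pi$ realize $\rDelta(\lambda)$ and $\rnabla(\lambda)$. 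The main obstacle is precisely this last identification: unlike the Koszul case (in which $B_0$ is semisimple and its standards degenerate to the simples), here $B_0$ is genuinely quasi-hereditary, and the naive grade-$0$ truncation $(\wgr\Delta(\lambda))_0=\Delta(\lambda)/\rad\Delta(\lambda)=L(\lambda)$ of the $B$-standard is strictly smaller than $\rDelta(\lambda)$. The identification must therefore rest on the specific structure of the admissible lattices $\wrDelta(\lambda),\wrnabla(\lambda)$ inside $L_\zeta(\lambda)$, together with the reciprocity argument of \cite[Cor.~3.2]{PS11}, which should adapt to close the gap.
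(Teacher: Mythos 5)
Part (a) of your argument is correct and takes essentially the same route as the paper: pass to a homogeneous defining sequence $J_i = Be_iB$, $e_i\in B_0$, via \cite[Prop.~4.2]{CPS1a}, then use positivity of the grading to show that a positively graded semisimple algebra is concentrated in grade zero and that the heredity multiplication map restricts correctly to grade zero, yielding $0\subseteq J_{1,0}\subseteq\cdots\subseteq J_{n,0}=B_0$ as a defining sequence for $B_0$.

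Your argument for part (b) has a genuine gap, and, separately, an error in the way you describe the obstacle. The first step — that $\wrad\,\wfa$ annihilates $\rDelta(\lambda)$ and $\rnabla(\lambda)$, so these are $B_0$-modules — is correct, though your route via the Hopf coproduct and the restricted case is more elaborate than necessary: since $L_\zeta(\lambda)|_{u_\zeta}$ is completely reducible (Steinberg tensor factorization, with the Frobenius-twisted factor being $u_\zeta$-trivial), $\rad u'_\zeta$ annihilates $L_\zeta(\lambda)$, hence $\wrad\,\wfa \subseteq \rad\wfa_K$ annihilates the lattices $\wrDelta(\lambda),\wrnabla(\lambda)\subset L_\zeta(\lambda)$, and one reduces mod $\pi$. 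The real gap is the identification $\rDelta(\lambda)=\Delta^{B_0}(\lambda)$ and $\rnabla(\lambda)=\nabla_{B_0}(\lambda)$. You show only that $\rDelta(\lambda)$ is a quotient of the $B_0$-standard $\Delta^{B_0}(\lambda)$, and your proposed closure — an orthogonality-plus-deformation-plus-Koszulity argument — is left as an unexecuted sketch. The paper's entire proof of this identification is the direct citation of \cite[Cor.~3.2]{PS11}, which establishes precisely that the quasi-hereditary algebra $(\wgr\wA_\Lambda)_0$ has standard and costandard modules $\wrDelta(\lambda)$ and $\wrnabla(\lambda)$; there is no further argument to be supplied, and the Koszulity of $u'_\zeta$ plays no role.

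Moreover, your diagnosis of the obstacle is based on a miscalculation: you assert $(\wgr\Delta(\lambda))_0 = \Delta(\lambda)/\rad\Delta(\lambda) = L(\lambda)$. But the grading operator $\wgr$ is taken with respect to the filtration by $\wrad^n\wfa$, \emph{not} by $\rad^n A$, so $(\wgr\Delta(\lambda))_0 = \Delta(\lambda)/(\wrad\,\wfa)\Delta(\lambda)$, which by \cite[Cor.~3.2]{PS11} is $\rDelta(\lambda)$, generically strictly larger than $L(\lambda)$. (Equality $(\rad\fa)\Delta(\lambda)=\rad\Delta(\lambda)$ holds only for weights in the Jantzen region, as in Lemma~\ref{ideals}, and is not assumed here.) So the discrepancy you flag between the grade-$0$ truncation of the $B$-standard and $\rDelta(\lambda)$ does not exist; they coincide, and that coincidence is exactly the content of the result you should be citing.
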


\begin{proof} Let $0=J_0\subseteq J_1\subseteq J_2\subseteq\cdots \subseteq J_n=B$ be a
defining sequence of idempotent ideals in $B$ as described above.   Each $J_i=Be_iB$, for an idempotent
$e_i\in B_0$. Necessarily (by the axioms for a quasi-hereditary algebra), $e_1Be_1$ is a semisimple algebra, so that necessarily $e_1B_0e_1=e_1Be_1$
is semisimple. In addition, multiplication $Be\otimes_{e_1Be_1}eB\longrightarrow BeB=J_1$ is an
isomorphism of $k$-vector spaces. Taking the gradings into account, it follows that multiplication
$B_0e_1\otimes_{e_1B_0e_1}e_1B_0\to e_1B_0e_1$ is an isomorphism. Therefore, $J_{1,0}=
B_0e_1B_0$ is a heredity ideal in $B_0$. Continuing, we find that $0\subseteq J_{1,0}\subseteq J_{2,0}\subseteq
\cdots \subseteq J_{n,0}$ is a defining sequence of ideals in $B_0$. It follows that $B_0$ is
quasi-hereditary with poset $\Lambda$, as required for (a).

Finally, to see (b), apply \cite[Cor. 3.2]{PS11}, with standard (resp., costandard) modules the
$\rDelta(\lambda)$ (resp., $\rnabla(\lambda)$), $\lambda\in\Lambda$.  Notice that, for any $n>0$,
$(\wrad^n\wfa)\rDelta(\lambda)=(\wrad^n\wfa)\rnabla=0$ because $\rDelta(\lambda)$ and $\rnabla(\lambda)$
are obtained by reductions mod $p$ of lattices in an irreducible $U_\zeta$-module. Hence, $\rDelta(\lambda)$
and $\rnabla(\lambda)$ are indeed $B_0=\wgr A/\wrad A$-modules. \end{proof}

\begin{rem}\label{abovediscussion}The above discussion extends to the $\sO$-algebras $\wA_\Lambda$. In fact, since $\sO$ is assumed to be complete, $\wA:=\wA_\Lambda$ is a semi-perfect
algebra (see \cite{CPS1a}).  In view of \cite[Prop. 4.2]{CPS1a}, the idempotent ideals
$\widetilde J_i$ making up a defining sequence of $\wA_\Lambda$ are all homogeneous and have the
form $\widetilde J_i=\wA e_i\wA$, for some idempotent $e_i$. The argument is then completed
as before. In particular, we note that $\wrDelta(\lambda)$ and $\wrnabla(\lambda)$ are modules for
$\wA_\Lambda$, $\wgr\wA_\Lambda$, $(\wgr\wA_\Lambda)_0$, with the first two algebras acting
through their common quotient algebra $(\wgr\wA_\Lambda)_0$.\end{rem}

%A finite dimensional graded algebra $B=\bigoplus_{n\geq 0}B_n$ is Koszul provided that
%\smallskip
%\begin{enumerate}
%\item[(1)] the algebra $B_0$ is semisimple; and
%\item[(2)]  if $L,L'$ are irreducible $B$-modules given pure grade 0, then
% $$\grExt^n_B(L,L'\langle r\rangle )\not=0\implies
%r=n.$$
 %\end{enumerate}\smallskip

We propose the following generalization of a Koszul algebra.

\begin{defn}\label{QKoszul} A finite dimensional, positively graded algebra $B=\bigoplus_{n\geq 0}B_n$ is called a {\it Q-Koszul algebra} provided the following conditions hold:

\begin{enumerate}
\item[(i)] the subalgebra $B_0$ is quasi-hereditary, with poset $\Lambda$ and standard (resp., costandard)
modules denoted $\Delta^0(\lambda)$ (resp., $\nabla_0(\lambda)$), $\lambda\in\Lambda$; and

\item[(ii)] if $\Delta^0(\lambda)$ and $\nabla_0(\lambda)$ are given pure grade 0 as graded $B$-modules
(through the homomorphism $B\twoheadrightarrow B/B_{\geq 1}\cong B_0$), then
$$\grExt^n_B(\Delta^0(\lambda),\nabla_0(\mu)\langle r\rangle )\not=0\implies n=r, \quad\forall\lambda,\mu\in\Lambda, n\in\mathbb N, r\in\mathbb Z.$$
\end{enumerate}\end{defn}

In the above definition, the algebra $B$ can be taken over any field, not necessarily our algebraically closed
field $k$ of positive characteristic $p$.

\begin{rems}\label{trivial} (a)
 A similar
generalization---in the abstract---of Koszul algebras, using ``tilting modules"  has been proposed
by  Madsen \cite{Madsen}.

(b)  Koszul
algebras and quasi-hereditary algebras provide rather trivial examples of Q-Koszul algebras. In the case
in which
$B$ is Koszul, the subalgebra $B_0$ is semisimple and hence it is quasi-hereditary. In this situation, $\Delta^0(\lambda)\cong\nabla_0(\lambda)$, $\lambda\in\Lambda$, are irreducible. View them as graded $B$-modules having pure
grade 0, condition (ii) is automatic from the definition of a Koszul algebra. Thus, $B$ is Q-Koszul. On the
other hand, suppose that $B$ is an (ungraded) quasi-hereditary algebra. View $B$ as positively graded by setting $B_0:=B$. Then
$B$ is Q-Koszul using the well-known fact that $\dim\Ext^n_B(\Delta(\lambda),\nabla(\mu))=\delta_{\lambda,\mu}\delta_{n,0}$ \cite[Lem. 2.2]{CPS1}.
\end{rems}

Now return to the group $G$. The next result shows that there are more interesting examples of Q-Koszul algebras than those considered in Remark \ref{trivial}(b).  The proof will be given in \S5,
immediately after the proof of Theorem \ref{lasttheorem}.

\begin{thm} \label{QKoszultheorem} Assume that $p\geq 2h-2$ is odd, and that condition (\ref{LCF})
holds.   Let $\Lambda$ be a finite ideal of $p$-regular dominant weights and form the graded algebra $B:=\wgr A_\Lambda$. Then $B$ is a Q-Koszul algebra with
poset $\Lambda$, setting $\Delta^0(\lambda)=\rDelta(\lambda)$ and $\nabla_0(\lambda)=\rnabla(\lambda)$,
$\lambda\in\Lambda$.\end{thm}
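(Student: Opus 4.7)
\medskip

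\noindent\emph{Proof plan for Theorem \ref{QKoszultheorem}.}

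Condition (i) of Definition \ref{QKoszul} is already Proposition \ref{B0}(b): the subalgebra $B_0 = (\wgr A_\Lambda)_0$ is quasi-hereditary with poset $\Lambda$, with standard modules $\rDelta(\lambda)$ and costandard modules $\rnabla(\lambda)$. So the substantive task is condition (ii), namely that
$$\grExt^n_B(\rDelta(\lambda),\rnabla(\mu)\langle r\rangle)\not=0\implies n=r$$
for all $\lambda,\mu\in\Lambda$, $n\in\mathbb N$, $r\in\mathbb Z$, where $\rDelta(\lambda)$ and $\rnabla(\mu)$ are placed in pure grade $0$ via the quotient $B\twoheadrightarrow B_0$.

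My approach is to establish that the minimal graded projective resolution $P_\bullet\twoheadrightarrow\rDelta(\lambda)$ in $B\text{-grmod}$ is \emph{linear}, i.e., that each $P_n$ is generated (as a graded $B$-module) by its grade-$n$ component $P_{n,n}$. Once this is in hand, any graded $B$-homomorphism $P_n\to\rnabla(\mu)\langle r\rangle$ is determined by its effect on $P_{n,n}$, which must land in the grade-$n$ component of $\rnabla(\mu)\langle r\rangle$; since $\rnabla(\mu)\langle r\rangle$ is concentrated in grade $r$, this forces $r=n$, giving (ii) at once.

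To prove linearity of $P_\bullet$, I would exploit the Koszul subalgebra $\fa\subset B$. Under the hypothesis $p\geq 2h-2$ odd with the LCF (\ref{LCF}), the algebra $\fa=u'$ is Koszul by \cite{AJS}, as recorded in \S\ref{gradedstructures}, and its Koszul grading coincides with the forced grading inherited from $\wgr A_\Lambda$. Because $\rDelta(\lambda)$ is the mod-$p$ reduction of an admissible lattice in the irreducible $U_\zeta$-module $L_\zeta(\lambda)$, the small quantum group $u'_\zeta$ acts on $\wrDelta(\lambda)$ through its semisimple quotient, so $\fa$ acts on $\rDelta(\lambda)$ through $\fa_0=\fa/\rad\fa$. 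The Koszulity of $\fa$ thus gives a linear minimal graded $\fa$-projective resolution of $\rDelta(\lambda)$. I would transfer this linearity upward to $B$: using the $\wfa$-tightness of projective $\wA_\Lambda$-lattices from \cite[Cor.~3.9]{PS11} recalled in \S\ref{gradedstructures}, graded $B$-projectives remain graded $\fa$-projective under restriction, so the restriction $P_\bullet|_\fa$ is an $\fa$-projective resolution of $\rDelta(\lambda)$ whose generation degrees bound those of the minimal $\fa$-resolution from above; an inductive argument on cohomological degree, exploiting minimality of $P_\bullet$, should then force $P_n$ to be generated exactly in grade $n$.

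The main obstacle is the last transfer step. Outside the Jantzen region $B$ itself is not Koszul, so one cannot simply invoke a Koszulity argument for $B$; a priori a minimal graded $B$-syzygy could pick up generators in grades strictly below the bound predicted by $\fa$-linearity. Ruling this out appears to require either a delicate comparison of minimal projective covers over $B$ and $\fa$ using the explicit $\wfa$-tightness structure of $\wgr A_\Lambda$, or, alternatively, a Hochschild--Serre-style argument in which the Koszul grading on $u'$ induces a grading on the $E_2$-page of the spectral sequence $H^p(G,\Ext^q_{G_1}(\rDelta(\lambda),\rnabla(\mu))^{[-1]})\Rightarrow\Ext^{p+q}_G(\rDelta(\lambda),\rnabla(\mu))$ that, after identification via the chain of isomorphisms from the introduction and Theorem \ref{GExt}, matches the graded $\Ext$-grading over $B$ and is concentrated in the correct bidegree.
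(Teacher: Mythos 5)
There is a genuine gap in your plan, and it is fatal rather than merely delicate: the minimal graded projective resolution of $\rDelta(\lambda)$ in $B$-grmod is \emph{not} linear, except in degenerate cases. Since $\rDelta(\lambda)$ is the standard $B_0$-module of highest weight $\lambda$ placed in pure grade $0$, its graded head is $L(\lambda)$, also concentrated in grade $0$. Hence the minimal graded cover is $P_0 = P_B(L(\lambda))$, and the first syzygy $\Omega_1 = \ker(P_0\twoheadrightarrow\rDelta(\lambda))$ already has a nonzero grade-$0$ component, namely $\ker\bigl((P_0)_0\twoheadrightarrow\rDelta(\lambda)\bigr)$, which is nonzero unless $\rDelta(\lambda)$ is itself $B_0$-projective (i.e., unless $\lambda$ is maximal in $\Lambda$). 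That grade-$0$ piece of $\Omega_1$ intersects $B_{\geq 1}\Omega_1$ trivially, so $\Omega_1/\mathrm{rad}^\flat\Omega_1$ has a nonzero grade-$0$ part and $P_1$ acquires generators in grade $0$, not grade $1$. The minimal resolution therefore fails linearity at the very first step. Your caveat about syzygies ``picking up generators in grades strictly below the bound predicted by $\fa$-linearity'' is not a subtle worry that can be ruled out: it is unavoidable because the $B$-projective cover of $\rDelta(\lambda)$ is too small. You need a step-$0$ object whose grade-$0$ piece is all of $\rDelta(\lambda)$, not just its simple head, and that object cannot be a $B$-projective.

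This is precisely what drives the paper's actual proof (stated after Theorem \ref{lasttheorem}). Condition (i) of Definition \ref{QKoszul} is Proposition \ref{B0}(b), as you say. For condition (ii), the paper invokes Theorem \ref{lasttheorem}, which rests on the $(\Gamma,\fa)$-projective resolution of Theorem \ref{maintheorem} rather than a minimal $B$-projective resolution. The step-$0$ term is $\Xi_0 = (\wgr A)\otimes_{(\wgr A)_0}\rDelta(\lambda)\cong\wgr P^\sharp(\lambda)$ (display (\ref{rail})), whose grade-$0$ piece is $\rDelta(\lambda)$ itself by \cite[Lem.~4.1(c)]{PS11}, so that $\Omega_1$ genuinely vanishes in grade $0$. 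The $\Xi_i$ are filtered by modules $\wgr P^\sharp(\gamma)\langle j\rangle$ that are $\fa$-projective but are \emph{not} $B$-projective; they are merely acyclic for $\Hom(-,\rnabla(\mu))$ (Proposition \ref{acyclic}(a), using $\Ext^{>0}_{(\wgr A)_0}(\wrDelta,\wrnabla)=0$). Moreover, the highest weights $\gamma$ appearing may lie outside $\Lambda$, forcing the ``expanding posets'' $\Gamma\subseteq\Gamma_0\subseteq\Gamma_1\subseteq\cdots$; one cannot stay over the single algebra $B$. With that resolution, one gets $\grExt^n_B(\rDelta(\lambda),\rnabla(\mu)\langle r\rangle)\cong\Hom_{(\wgr A)_0}(\Omega_n/\rad\Omega_n,\rnabla(\mu)\langle r\rangle)$ with $\Omega_n/\rad\Omega_n$ pure of grade $n$, which gives $r=n$ directly. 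Your secondary suggestion via a Hochschild--Serre spectral sequence is too loosely sketched to evaluate, but it is also not what the paper does, and in any case it would need some surrogate for the same purity-of-grade information that the $(\Gamma,\fa)$-resolutions deliver.
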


Finally, there is the following notion of a standard Q-Koszul algebra. It is modeled on the notion of a
standard Koszul algebras as used by Mazorchuk \cite{Maz}.\footnote{Mazorchuk quotes a paper
\cite{ADR} for the name standard Koszul, though the notion is not quite the same. In any case, the notion (but not the name)  goes back to earlier work of Irving \cite{Irving}.}

\begin{defn}\label{standard} A positively gradded algebra $B=\bigoplus_{n\geq 0}B_n$ is called a {\it standard Q-Koszul algebra} provided it is Q-Koszul\footnote{It seems likely that the requirement that $B$ be Q-Koszul is  already implied by conditions (i) and (ii) and thus is redundant. We intend to discuss this issue further elsewhere.} the following conditions are satisfied:
\begin{enumerate}
\item[(i)] $B$ graded quasi-hereditary algebra with weight poset $\Lambda$, and with standard (resp., costandard, irreducible) modules $\Delta^B(\lambda)$
(resp., $\nabla_B(\lambda)$, $L_B(\lambda)$), for $\lambda\in\Lambda$; and
\item[(ii)]  given
 $\lambda,\mu\in\Lambda$, and positive integers $r,n$,
$$\begin{cases}\grExt^n_B(\Delta^B(\lambda),\nabla_0(\mu)\langle r\rangle )\not=0\implies n=r;\\
\grExt^n_B(\Delta^0(\mu),\nabla_B(\lambda)\langle r\rangle )\not=0\implies n=r.\end{cases}
$$
\end{enumerate}
In (ii), $\Delta^0(\mu)$ (resp., $\nabla_0(\mu)$), $\lambda,\mu\in\Lambda$, are the standard (resp.,
costandard) modules for the quasi-hereditary algebra $B_0$. They are viewed as graded $B$-modules
(concentrated in grade 0) through the homomorphism $B\twoheadrightarrow B/B_{\geq 1}\cong B_0$.
\end{defn}

  The complete proof of the theorem below is postponed to \S6. The theorem requires that there is, by \cite[8.4]{PS9},  a natural duality
$\mathfrak d$ on the module categories $\wgr A_\Lambda$-mod and $\wgr A_\Lambda$-grmod. It arises from an anti-automorphism
of the order $\wA_\Lambda$ and so induces an anti-automorphism on $A_\Lambda$ and a graded
anti-automorphism on $\wgr A_\Lambda$.
Thus, it induces a duality on $A_\Lambda$-mod, $\wgr A_\Lambda$-mod, and $\wgr A_\Lambda$-grmod. This duality fixes irreducible modules and interchanges standard and costandard modules.

\begin{thm} \label{standardtheorem} Assume that $p\geq 2h-2$ is odd, and that (\ref{LCF}) holds. Let $\Lambda$ be a finite ideal of $p$-regular dominant weights and form the graded algebra $B:=\wgr A_\Lambda$. Then $B$ is a standard Q-Koszul algebra (in the sense of Definition \ref{standard}) with
poset $\Lambda$, setting
$$\begin{cases} \Delta^B(\lambda)=\wgr \Delta(\lambda),\\
 \Delta^0(\lambda)=\rDelta(\lambda), \\
 \nabla_B(\lambda)={\mathfrak d}\Delta^B(\lambda),\\
 \nabla_0(\lambda)=\rnabla(\lambda),\end{cases}
 $$
for $\lambda\in\Lambda$. \end{thm}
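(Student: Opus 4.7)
The plan is to obtain Theorem \ref{standardtheorem} by combining the Q-Koszul property of $B=\wgr A_\Lambda$ (Theorem \ref{QKoszultheorem}), the graded quasi-hereditary structure of $B$ recorded in \S2.3, and the graded anti-involution $\mathfrak d$ from \cite[8.4]{PS9}. Condition \ref{standard}(i) follows immediately from the identification of $\wgr A_\Lambda$ as a graded quasi-hereditary algebra with standard modules $\wgr\Delta(\lambda)$ (\S2.3), together with the definition $\nabla_B(\lambda)={\mathfrak d}\Delta^B(\lambda)$. The identifications $\Delta^0(\lambda)=\rDelta(\lambda)$ and $\nabla_0(\lambda)=\rnabla(\lambda)$ for the quasi-hereditary algebra $B_0$ are supplied by Proposition \ref{B0}(b).

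Next I would use $\mathfrak d$ to collapse the two Ext-vanishing statements of Definition \ref{standard}(ii) into one. Since $\mathfrak d$ swaps $\Delta^B(\lambda)\leftrightarrow\nabla_B(\lambda)$ by definition and (via its effect on $B_0$) swaps $\rDelta(\mu)\leftrightarrow\rnabla(\mu)$, and since a graded anti-involution reverses $\grExt$ arguments while negating the grading shift, there is a natural isomorphism
\[
\grExt^n_B(\Delta^B(\lambda),\nabla_0(\mu)\langle r\rangle)\;\cong\;\grExt^n_B(\Delta^0(\mu),\nabla_B(\lambda)\langle r\rangle),
\]
so it suffices to verify only the first. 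For the first, I would filter $\wgr\Delta(\lambda)$ in $B$-grmod by the tail submodules $M_{\geq j}=\bigoplus_{i\geq j}(\wgr\Delta(\lambda))_i$, whose successive quotients $(\wgr\Delta(\lambda))_j$ are concentrated in a single grade $j$ and are therefore $B_0$-modules (shifted to grade $j$) via the surjection $B\twoheadrightarrow B_0$. The decisive structural claim is that each graded piece $(\wgr\Delta(\lambda))_j$ admits a $\Delta^0=\rDelta$-filtration as a $B_0$-module. Granted this claim, the iterated long exact sequences for $\grExt^\bullet_B(-,\rnabla(\mu)\langle r\rangle)$, combined with the Q-Koszul vanishing applied to each pure-grade-$j$ $\rDelta(\sigma)$-section, force nonvanishing only when the cohomological degree and the grading shift match, yielding $n=r$.

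The principal obstacle is precisely the $\Delta^0$-filtration claim on the graded pieces $(\wgr\Delta(\lambda))_j$. I would attack this by leveraging the $\Delta^p$-filtration of $\Delta(\lambda)$ obtained in \cite{PS11} (valid here because the LCF hypothesis identifies $\Delta^p(\sigma)\cong\rDelta(\sigma)$), together with the tightness of $\wDelta(\lambda)$ over $\wfa$ recorded in \S\ref{gradedstructures} (so that $\wgr\wDelta(\lambda)=\gr\wDelta(\lambda)$), and the linearity of the $\fa$-graded structure of $\Delta(\lambda)$ furnished by Theorem \ref{DeltaKoszul}. The idea is to refine the $\Delta^p$-filtration of $\Delta(\lambda)$ so that it is compatible with the $\wrad^\bullet\wfa$-series, so that after passage to the associated graded the $\rDelta$-sections distribute cleanly across the graded pieces of $\wgr\Delta(\lambda)$, producing the desired $B_0$-module $\rDelta$-filtration on each $(\wgr\Delta(\lambda))_j$. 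Controlling this refinement carefully enough to match grades with $\rDelta$-sections will be where the real work lies.
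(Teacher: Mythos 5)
Your proposal handles condition (i) and the duality reduction of condition (ii) correctly, and you correctly locate the two structural inputs the paper needs: the $\rDelta$-filtration of the graded pieces $(\wgr\Delta(\lambda))_s$ (for which the paper cites \cite[Thm.\ 5.1]{PS11}) and the $\fa$-linearity of $\wgr\Delta(\lambda)$ from Theorem \ref{DeltaKoszul}. The gap is in the final Ext computation. Filtering $M=\wgr\Delta(\lambda)$ by the tail submodules $M_{\geq j}$ gives sections concentrated in grade $j$ with $\rDelta$-filtrations; applying the Q-Koszul vanishing to a section $\rDelta(\sigma)\langle j\rangle$ yields only $\grExt^n_B(\rDelta(\sigma)\langle j\rangle,\rnabla(\mu)\langle r\rangle)\neq 0\implies n=r-j$. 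Since the grades $j$ of the sections range over $j\geq 0$, the iterated long-exact-sequence argument therefore constrains a nonzero $\grExt^n_B(M,\rnabla(\mu)\langle r\rangle)$ only to $n\leq r$, not to $n=r$: nothing in the filtration prevents, say, a grade-$1$ section from contributing at $n=r-1$. So this argument establishes only one inequality of the required equality.

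The missing ingredient is precisely where the paper deploys Theorem \ref{DeltaKoszul}(b): the $\fa$-linearity of $\wgr\Delta(\lambda)$ is used, together with the $\rDelta$-filtration of its graded pieces, to verify the hypotheses of Theorem \ref{maintheorem}(a) (with $\Gamma=\Lambda$), which produces an $\fa$-linear $(\Gamma,\fa)$-projective resolution $\Xi_\bullet\twoheadrightarrow M$. Its $n$-th syzygy $\Omega_n$ is generated in grade $n$, so $\Omega_n/\rad\Omega_n$ is pure of grade $n$, and Proposition \ref{acyclic}(c) then identifies $\grExt^n_B(M,\rnabla(\mu)\langle r\rangle)$ with $\Hom_{(\wgr A)_0}(\Omega_n/\rad\Omega_n,\rnabla(\mu))$ when $r=n$ and with $0$ otherwise. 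You invoke Theorem \ref{DeltaKoszul} only as an auxiliary to prove the $\rDelta$-filtration of the graded pieces, but that is already given by \cite[Thm.\ 5.1]{PS11}; its real role is to supply the linearity that makes the syzygy resolution of \S4, not the na\"ive filtration by tails, the right tool for pinning down the grade.
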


A graded quasi-hereditary algebra $B$ has, by definition, a graded Kazhdan-Lusztig theory
provided there is a length function $l:\Lambda\to{\mathbb Z}$ such that, for $\lambda,\mu\in\Lambda$,
$r,n\in\mathbb Z$, the non-vanishing of either $\grExt^n_B(\Delta^B(\lambda),L_B(\mu)\langle r\rangle )$ or
of $\grExt^n_B(L_B(\mu)\langle r\rangle ,\nabla_B(\lambda))$ implies that $n=r\equiv l(\lambda)-l(\mu)$ mod $2$.
See \cite[\S3]{CPS1a} and \cite[\S2.1]{CPS5}.

The usual length function $l$ on the (affine) Coxeter group $W_p$ of $G$ leads to a length function $l:X_{\text{\rm reg}}(T)_+\to\mathbb N$ as follows. For a $p$-regular
dominant weight $\lambda$, write $\lambda=w\cdot\lambda^-$, where $\lambda^-\in C^-_p$ (the
unique alcove containing $-2\rho$) and $w\in W_p$. Then put
$l(\lambda):=l(w)$.

The following corollary was promised in \cite[Rem. 10.7(a)]{PS9}. The proof is
postponed to \S6.

 \begin{cor}\label{KLtheory} Assume that $p\geq 2h-2$ is odd, and that (\ref{LCF}) holds (for $p$ and $G$). Let $\Lambda$ is a finite ideal of $p$-regular dominant weights contained in $\Jan$. Then  $\wgr \Delta(\lambda)$ is a linear module over $\wgr A$. Also, the graded quasi-hereditary algebra $\wgr A$-mod has a graded Kazhdan-Lusztig theory. In particular, $\wgr A$ is Koszul. \end{cor}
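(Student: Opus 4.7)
The plan is to reduce all three assertions of the corollary to Theorem \ref{standardtheorem} via the key observation that, when $\Lambda\subseteq\Jan$, the grade-zero subalgebra $B_0:=(\wgr A_\Lambda)_0$ is actually semisimple. To see this, take $\lambda=\lambda_0+p\lambda_1\in\Lambda$ with $\lambda_0\in X_1(T)$; the defining inequality $(\lambda+\rho,\alpha_0^\vee)\leq p(p-h+2)$ of (\ref{Jregion}) expands as $(\lambda_0+\rho,\alpha_0^\vee)+p(\lambda_1,\alpha_0^\vee)\leq p(p-h+2)$, and since $(\lambda_0+\rho,\alpha_0^\vee)\geq h-1\geq 1$ one forces $(\lambda_1,\alpha_0^\vee)\leq p-h+1$. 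Thus $\lambda_1+\rho$ sits in the closure of the bottom dominant $p$-alcove, so $\Delta(\lambda_1)=L(\lambda_1)=\nabla(\lambda_1)$. Combining this with (\ref{LCF}), (\ref{Lin}), and Steinberg's tensor product theorem gives $\rDelta(\lambda)=L(\lambda_0)\otimes\Delta(\lambda_1)^{[1]}=L(\lambda)=\rnabla(\lambda)$; Proposition \ref{B0}(b) then identifies $\Delta^0(\lambda)=\nabla_0(\lambda)=L_B(\lambda)$, so $B_0$ is semisimple.

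Next, Theorem \ref{standardtheorem} gives $B=\wgr A_\Lambda$ standard Q-Koszul with $\Delta^B(\lambda)=\wgr\Delta(\lambda)$. Substituting $\Delta^0(\mu)=\nabla_0(\mu)=L_B(\mu)$ from the first paragraph into Definitions \ref{QKoszul}(ii) and \ref{standard}(ii) collapses the Q-Koszul condition to $\grExt^n_B(L_B(\lambda),L_B(\mu)\langle r\rangle)\neq 0\Rightarrow n=r$---that is, $B$ is Koszul (the ``in particular'' clause of the corollary)---and collapses the standard Q-Koszul conditions to $\grExt^n_B(\wgr\Delta(\lambda),L_B(\mu)\langle r\rangle)\neq 0\Rightarrow n=r$ together with the dual implication for $\wgr\nabla$. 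By the graded-Ext characterization of linearity for a Koszul algebra (recorded in the footnote to the Koszul definition in \S\ref{gradedstructures}), the first of these says exactly that $\wgr\Delta(\lambda)$ is linear over $\wgr A$, which is the corollary's first claim. For the graded Kazhdan-Lusztig theory, one still needs the parity $n\equiv l(\lambda)-l(\mu)\pmod 2$ on top of $n=r$. Since $\rnabla(\mu)=L(\mu)$ from the first paragraph, Theorem 5.3(b) yields $\Ext^n_{\wgr A}(\wgr\Delta(\lambda),L(\mu))\cong\Ext^n_G(\Delta(\lambda),L(\mu))$, and the latter vanishes outside the required parity by the standard Kazhdan-Lusztig parity for $G$ on $p$-regular weights in $\Jan$ (the same ingredients appearing in Lemma \ref{ERlemma} and \cite{CPS1a}). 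Decomposing the ungraded Ext into graded pieces via (\ref{gradedungraded}) propagates the parity to each $\grExt^n_B(\wgr\Delta(\lambda),L_B(\mu)\langle r\rangle)$; combined with $n=r$ this gives $n=r\equiv l(\lambda)-l(\mu)\pmod 2$. The second graded Kazhdan-Lusztig implication (involving $L_B(\mu)\langle r\rangle$ and $\wgr\nabla(\lambda)$) follows dually using the anti-automorphism duality $\mathfrak d$ from \cite[8.4]{PS9}.

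The main obstacle is really the first paragraph: the arithmetic placement of $\lambda_1$ in the closure of the bottom alcove is exactly what collapses the ``Q'' out of the Q-Koszul structure inside $\Jan$, so that Theorem \ref{standardtheorem} degenerates into classical Koszulity plus linearity of the standard modules. Once this reduction is in place, the remaining work is bookkeeping: matching the standard Q-Koszul vanishing to the classical linearity and Koszulity criteria, and inserting the already-available ungraded parity from Kazhdan-Lusztig theory of $G$.
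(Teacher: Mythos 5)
Your argument matches the paper's proof in essence: collapse the (standard) Q-Koszul structure of $\wgr A_\Lambda$ inside the Jantzen region using $\rDelta(\lambda)=L(\lambda)=\rnabla(\lambda)$, then obtain the Kazhdan--Lusztig length-parity by transferring the graded $\Ext$-group nonvanishing to $\Ext_G$ and citing \cite{CPS1a}. You usefully make explicit the alcove-geometry computation showing $(\wgr A_\Lambda)_0$ is semisimple, which the paper asserts in a single line. The one correction needed is a reference: the identification $\Ext^n_{\wgr A}(\wgr\Delta(\lambda),L(\mu))\cong\Ext^n_G(\Delta(\lambda),L(\mu))$ that you use for the parity step is the content of Theorem \ref{GExt} (specifically (\ref{equiv2})), not Theorem \ref{nextmainresult}(b). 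The latter concerns $\rDelta(\lambda)$ in the contravariant slot, not $\wgr\Delta(\lambda)$, and these differ even when $\Lambda\subseteq\Jan$: there $\rDelta(\lambda)=L(\lambda)$, whereas $\wgr\Delta(\lambda)$ is the forced-graded Weyl module, filtered grade by grade by $\rDelta$'s (by \cite[Thm.\ 5.1]{PS11}) and typically far from irreducible. With that citation corrected, the rest of the deduction---Koszulity and linearity of $\wgr\Delta(\lambda)$ from the collapsed standard Q-Koszul conditions, followed by the length parity via the decomposition (\ref{gradedungraded}) of ungraded $\Ext$ into graded pieces---runs exactly as the paper's proof does, only in a slightly rearranged order.
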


\section{$(\Gamma,\fa)$-Resolutions.}  This section begins the study of resolutions necessary for most of the main
results of this paper. The detailed information obtained on filtrations of the syzygies in these resolutions are important in their own right. 

We continue the notation of \S\S1,2. We will not quote any results
from \S3.  Let $\Gamma$ denote a finite  ideal in $X_{\text{\rm reg}}(T)_+$ and let $A=A_\Gamma$. The reader should keep in mind that $A$-mod consists of finite dimensional rational $G$-modules whose composition factors have the form $L(\gamma)$ for $\gamma\in\Gamma$. Let $M$ be a graded $\wgr A$-module.The main result of this section, given in Theorem \ref{maintheorem}, constructs
a key specific resolution $\Xi_\bullet\twoheadrightarrow M$.
It is required that the (\ref{LCF}) condition holds, that $M|_\fa$ be linear in the sense of \S2.5, and that each graded component
$M_s$, when regarded as a $(\wgr A)_0$-module has a $\rDelta$-filtration. This resolution will play a central
role in \S\S5,6 in, for example, explaining the structure of rational $G$-modules of the form $\Ext^n_\fa(\rDelta(\lambda),\rnabla(\mu))=\Ext^n_{G_1}(\rDelta(\lambda),\rnabla(\mu))$ (resp., $\Ext^n_\fa(\Delta(\lambda),\nabla(\mu))=\Ext^n_{G_1}(\Delta(\lambda),\nabla(\mu))$  for $p$-regular dominant weights
$\lambda,\mu$; see Theorem \ref{nextmainresult} (resp., Theorem \ref{Jantzentheorem2})).

\begin{defn}\label{basicdefn}
Let $M$ be a graded $\wgr A_\Gamma$-module. A $(\Gamma,\fa)$-{\it projective resolution} of $M$ is an exact complex
\begin{equation}\label{Gammaresolution}
\cdots \longrightarrow \Xi_i\longrightarrow \cdots \longrightarrow \Xi_1\longrightarrow \Xi_0\to M\to 0
\end{equation}
of graded vector spaces and graded maps with the following properties:

\begin{itemize}
\item[(i)] there is an increasing chain $\Gamma=\Gamma_{-1}\subseteq\Gamma_0\subseteq \Gamma_1
\subseteq\cdots$ of finite ideals in $X_{\text{\rm reg}}(T)_+$, such that, for $i\geq 0$, $\Xi_i\in \wgr A_{\Gamma_i}$--mod;

\item[(ii)]  the maps $\Xi_i\to \Xi_{i-1}$ are morphisms in the category $\wgr A_{\Gamma_i}$-grmod. (Set $\Xi_{-1}:= M$.)  In this statement, the graded $\wgr A_{\Gamma_{i-1}}$-module $\Xi_{i-1}$
is regarded as a graded $\wgr A_{\Gamma_i}$-module through the algebra surjection $\wgr A_{\Gamma_i}
\twoheadrightarrow \wgr A_{\Gamma_{i-1}}$. See \cite[Rem. 3.8]{PS10}.

\item[(iii)] for $i\geq 0$, the $\wgr A_{\Gamma_i}$-module $\Xi_i$ has a graded filtration with sections of
the form $\wgr P^\sharp(\gamma)\langle j \rangle$, $\gamma\in \Gamma_{i-1}$, $j\in\mathbb N$.  (The module $P^\sharp(\gamma)$ is
defined in (\ref{generalizedQsandPs}).)

\end{itemize}

Similarly, at level of orders and lattices over $\sO$, there is an analogous notion of a $(\Gamma,\wfa)$-projective resolution
$\wXi_\bullet\twoheadrightarrow \wM$
of a $\wgr \wA_\Gamma$-lattice $\wM$. Setting $\Xi_i:=k\otimes_\sO\wXi_i$ and $M:=k\otimes_\sO\wM$,
it follows that $\Xi_i\twoheadrightarrow M$ is a $(\Gamma,\fa)$-projective resolution of $M$. (Use
$\gr\wP^\sharp(\gamma)=\wgr\wP^\sharp(\gamma)$ in place of $\wgr P^\sharp(\gamma)$.)
\end{defn}

Continue in the context of Defn. \ref{basicdefn}. Suppose that $j>0$, and let $\Omega_j:=\ker(\Xi_{j-1}\to \Xi_{j-2})$. Recall that $\Xi_{-1}:=M$. Define the  $j$-truncated complex
\begin{equation}\label{truncated}
\Xi_\bullet^{\dagger}=\Xi_\bullet^{\dagger_j}: \,\,0\to\Omega_{j}\to \Xi_{j-1}\to \cdots\to \Xi_0\to 0\end{equation}
 in the category $\wgr A_{\Gamma_j}$-grmod.  Observe that $(\Xi^\dagger)_{j}=\Omega_{j}$ and
 $\Xi^\dagger_{-1}=0$. By
 definition,  $\Xi^\dagger_\bullet\twoheadrightarrow M$ is a resolution of $M$. The syzygies $\Omega_j$ will
 play a role below. Similar considerations apply in the integral case (over $\sO$).

Now assume that $p\geq 2h-2$ is odd, and that the LCF condition (\ref{LCF}) holds. In particular, $\fa=u'$ (the direct sum of the regular
blocks of the universal enveloping algebra $u$ of $G$) is a Koszul algebra. A $(\Gamma,\fa)$-projective resolution of $M$ gives a
resolution of $M|_\fa$
by graded and projective $\fa$-modules $\Xi_i|_\fa$. In fact, $\Xi_i|_\fa$ has, by definition, a $\wgr A_{\Gamma_i}$-filtration with sections $\wgr P^\sharp(\gamma)\langle j\rangle $ and each $\wgr P^\sharp(\gamma)\langle j\rangle $ is
a projective graded $\fa$-module. This resolution is $\fa$-linear (in the sense of \S2.5)  if and only if
$j=i$, for all the $\wgr A_{\Gamma_i}$-modules $P^\sharp(\gamma)\langle j\rangle $ which appear as sections (and hence
as $\fa$-summands) of $\Xi_i$ in condition (iii) above.

We will see in \S\S 5, 6 that these resolutions can be used to compute, among other things,  the spaces $\grExt_{\wgr A}^m(M,X)$
and $\Ext^m_{\wgr A}(M,X)$ with  $X=\rnabla(\gamma)$, with $M$ as above. Theorem \ref{maintheorem}
below constructs these resolutions for suitable $M$. Integral versions are also obtained.
In addition, the theorem
 shows that, in the presence of (\ref{LCF}),  {\it 
 the syzygy modules in suitable resolutions of the modules $\rDelta(\lambda)$ (for a $p$-regular dominant weight $\lambda$) have $\rDelta$-filtrations.}  Once Theorem \ref{DeltaKoszul} is established\footnote{Part (a) of Theorem \ref{DeltaKoszul} does not assume the LCF
and depends only on results from \cite{PS10}, while part (b) is derived from Theorem \ref{maintheorem} applied to
$\rDelta(\lambda)$. }, a similar result by be deduced from Theorem \ref{maintheorem} for resolutions of $\wgr\Delta(\lambda)$, expanding a main theme
of \cite{PS11}, which provided a $\rDelta$-filtration of Weyl modules.

\begin{thm}\label{maintheorem}Assume that $p\geq 2h-2$ is odd, and that the LCF condition (\ref{LCF}) holds. Let $\Gamma$ be any finite  ideal in the set
$X_{\text{\rm reg}}(T)_+$ of $p$-regular
dominant weights.  Let $A=A_\Gamma$ and $\wA=\wA_\Gamma$.

(a) Assume that $M$ is a graded $\wgr A$-module such that each grade $M_s$ has a $\rDelta$-filtration.
Assume that $M|_\fa$ is a linear module.  There exists a resolution (\ref{Gammaresolution}) of $M$ which is both $\fa$-linear and $(\Gamma,\fa)$-projective such
that, for $i\geq 0$,   $\Xi_i$ and $\Omega_{i+1}:=\ker(\Xi_i\to \Xi_{i-1})$  have a $\rDelta$-filtration, grade by grade.

(b) Assume that $\wM$ is a graded $\wgr \wA$-module such that each grade $\wM_s$ has a $\wrDelta$-filtration.
Assume that $\wM|_{\wfa}$ is a linear module. There exists an $\wfa$-linear $(\Gamma, \wfa)$-projective resolution of $\wM$, analogous to (\ref{Gammaresolution}),
 such
that, for $i\geq 0$,   $\wXi_i$ and $\wOmega_{i+1}:=\ker(\wXi_i\to \wXi_{i-1})$  have a $\wrDelta$-filtration, grade by grade.\end{thm}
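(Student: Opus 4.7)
The plan is to build the resolution by induction on $i$, constructing each $\Xi_i$ and the boundary map $\Xi_i \to \Xi_{i-1}$ simultaneously. I focus on part (a); part (b) will follow by the analogous argument carried out over $\sO$ with $\wfa$-tight $\wgr \wA$-lattices and the modules $\wgr \wP^\sharp(\gamma)$, with reduction mod $\pi$ recovering (a). My inductive hypothesis at stage $i$ is that the current syzygy $\Omega_i := \ker(\Xi_{i-1} \to \Xi_{i-2})$ (with the convention $\Xi_{-1} := M$) is a graded $\wgr A_{\Gamma_{i-1}}$-module whose restriction $\Omega_i|_\fa$ is a linear $\fa$-module generated in grade $i$, and each grade $(\Omega_i)_s$ has a $\rDelta$-filtration as a $(\wgr A)_0$-module.

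To construct $\Xi_i$, I exploit that $\wgr P^\sharp(\gamma)$ is generated in grade $0$ with top $L(\gamma)$ and restricts to a projective $\fa$-module. I enlarge the ideal to an appropriate finite $\Gamma_i \supseteq \Gamma_{i-1}$ and set $\Xi_i := \bigoplus_\gamma \wgr P^\sharp(\gamma)\langle i \rangle^{\oplus m_\gamma}$ with multiplicities $m_\gamma$ matching the $(\wgr A)_0$-head of $(\Omega_i)_i$, so that the resulting graded map $\Xi_i \twoheadrightarrow \Omega_i$ has restriction to $\fa$ equal to the $\fa$-projective cover of $\Omega_i|_\fa$. This built-in shift $\langle i \rangle$ realizes the $\fa$-linearity required in (\ref{truncated}).

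Now I propagate the inductive hypothesis to $\Omega_{i+1} := \ker(\Xi_i \to \Omega_i)$. The $\fa$-syzygy of a linear module is again linear (shifted by one), which yields the first condition. For the $\rDelta$-filtration condition, I first verify that each $\wgr P^\sharp(\gamma)$ has a $\rDelta$-filtration grade by grade: the module $P^\sharp(\gamma)$ has a $\Delta$-filtration (as stated after (\ref{generalizedQsandPs})), the forced-grading machinery of \cite{PS10,PS11} promotes this to a $\wgr \Delta$-filtration of $\wgr P^\sharp(\gamma)$, and Theorem \ref{DeltaKoszul}(a)—which does not invoke the LCF—refines each $\wgr \Delta(\gamma')$ grade by grade into a $\rDelta$-filtration. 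Hence each $(\Xi_i)_s$ has a $\rDelta$-filtration. Applying this to the short exact sequence
\[
0 \to (\Omega_{i+1})_s \to (\Xi_i)_s \to (\Omega_i)_s \to 0
\]
of $(\wgr A)_0$-modules, Proposition \ref{B0}(b) identifies $(\wgr A)_0$ as quasi-hereditary with standard modules $\rDelta(\lambda)$ and costandard modules $\rnabla(\lambda)$; since $(\Omega_i)_s$ has a $\rDelta$-filtration, one has $\Ext^1_{(\wgr A)_0}((\Omega_i)_s, \rnabla(\mu)) = 0$ for all $\mu \in \Gamma$, and the long exact $\Ext$-sequence then gives $\Ext^1_{(\wgr A)_0}((\Omega_{i+1})_s, \rnabla(\mu)) = 0$ for all $\mu$. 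In this highest weight category that is equivalent to $(\Omega_{i+1})_s$ admitting a $\rDelta$-filtration, closing the induction.

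The main obstacle lies in establishing that $\wgr P^\sharp(\gamma)$ admits a $\rDelta$-filtration grade by grade: this requires that the ungraded $\Delta$-filtration of $P^\sharp(\gamma)$ lift cleanly through the forced-grading process, and then that Theorem \ref{DeltaKoszul}(a) be applicable to each $\wgr \Delta$-section in order to pass to $\rDelta$-pieces grade by grade. A secondary difficulty, confined to part (b), is maintaining $\wfa$-tightness throughout the $\sO$-level construction so that $\gr$ and $\wgr$ coincide on the chosen lattices (via \cite[Lem. 3.5]{PS11}) and the grade-by-grade $\wrDelta$-filtrations survive reduction mod $\pi$ into the $\rDelta$-filtrations required by (a).
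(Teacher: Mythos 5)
The central gap is in the construction of $\Xi_i$. You propose $\Xi_i := \bigoplus_\gamma \wgr P^\sharp(\gamma)\langle i\rangle^{\oplus m_\gamma}$ with multiplicities read off from a head, and simply assert the existence of a graded $\wgr A$-surjection onto $\Omega_i$. But a $\wgr A$-grmod map $\wgr P^\sharp(\gamma)\langle i\rangle \to \Omega_i$ corresponds, via the isomorphism $\wgr P^\sharp(\gamma)\cong \wgr A\otimes_{(\wgr A)_0}\rDelta(\gamma)$ and Frobenius reciprocity, to a $(\wgr A)_0$-module map $\rDelta(\gamma)\to (\Omega_i)_i$. Producing a surjection from your direct sum therefore requires a family of $(\wgr A)_0$-maps from various $\rDelta(\gamma)$'s whose images generate $(\Omega_i)_i$ --- that is, a surjection onto $(\Omega_i)_i$ from a \emph{direct sum} of $\rDelta$'s. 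Since $(\wgr A)_0$ is only quasi-hereditary (not semisimple) and $\rDelta(\gamma)$ is not $(\wgr A)_0$-projective, a module $(\Omega_i)_i$ with a $\rDelta$-filtration need not be such a quotient; a non-split extension of two $\rDelta$'s already obstructs this. The paper side-steps the problem entirely by setting $\Xi_i := \wgr A_{\Gamma_i}\otimes_{(\wgr A_{\Gamma_i})_0}(\Omega_i)_i$, which carries a canonical counit surjection onto $\Omega_i$, and then proving in Lemma \ref{basic}(b) that this tensor product is \emph{filtered by} (not a direct sum of) the $\wgr P^\sharp(\gamma)\langle i\rangle$'s --- which is all Definition \ref{basicdefn}(iii) requires. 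Establishing that filtration in turn needs the $\Tor$-vanishing $\Tor_1^{(\wgr A)_0}(\wgr A, \rDelta(\gamma))=0$ of display (\ref{tor}), proved by showing $\wgr\wA$ has a right $(\wrDelta)^\circ$-filtration over $(\wgr\wA)_0$ and invoking Proposition \ref{appendixprop}; you do not address this, and your direct-sum proposal cannot substitute for it because the map it needs generally does not exist.

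A secondary point: your claim that Theorem \ref{DeltaKoszul}(a) (LCF-free) already refines each $\wgr\Delta$-section of $\wgr P^\sharp(\gamma)$ grade-by-grade into $\rDelta$-pieces is a misattribution. Theorem \ref{DeltaKoszul}(a) only identifies $\Delta(\lambda)$ with the graded module $\wgr\Delta(\lambda)$ over $\fa$; the grade-by-grade $\rDelta$-filtration of $\wgr\Delta(\lambda)$ (hence of $\wgr P^\sharp(\gamma)\langle m\rangle_s$) is \cite[Thm.\ 5.1]{PS11}, which \emph{does} require (\ref{LCF}) --- the paper cites this explicitly in the proof of Lemma \ref{basic}(c). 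This doesn't threaten the theorem since (\ref{LCF}) is assumed, but it means the step you flag as LCF-free isn't, and the lemma you cite doesn't supply what is needed.

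Finally, a matter of organization rather than correctness: the paper proves part (b) first over $\sO$ and obtains (a) by base change, rather than the reverse. Reduction mod $\pi$ from a $\wrDelta$-filtration to a $\rDelta$-filtration is clean; lifting from (a) to (b) would require Nakayama-type arguments at every stage, so the paper's order is the efficient one if you wish to repair the argument along these lines.
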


Before proving the theorem, some further notation and a preliminary lemma are required.

For a finite  ideal $\Gamma$ in $X_{\text{\rm reg}}(T)_+$, let $r:=r(\Gamma)$ be the minimal positive
integer such that $\Gamma\subseteq X_r(T)$.   For a positive integer $r$, put
\begin{equation}\label{Lambda}\Lambda_r:=\{\lambda\in X_{\text{\rm reg}}(T)_+\,|\,(\lambda,\alpha_0^\vee)<2p^r(h-1) \}.\end{equation}
Thus,  $\Lambda_r$ in an ideal in the poset of $p$-regular weights. If $r\geq r(\Gamma)$,  then $\Gamma$
is an ideal in $\Lambda_r$.  In addition, if $\gamma\in\Gamma$,  the $G_rT$-projective cover $\widehat Q_r(\gamma)$ of the irreducible $G_rT$-module $\widehat L_r(\gamma)$ of highest weight $\gamma$ has a unique $G$-module
structure with $G$-composition factors
$L(\tau)$, $\tau\in\Lambda_r$. In \cite{JanB}, this $G$-module is denoted by the same symbol
$\widehat Q_r(\gamma)$, but we write it as $P_r(\gamma)$. Given $\gamma\in X_1(T)$, $P_1(\gamma)
=P^\sharp(\gamma)$ in the notation of (2.1.1).

Let $A:=A_{\Lambda_r}$. By \cite[p. 333]{JanB}, $P_r(\gamma)$ is the projective cover of $L(\gamma)$ in the ``$p^r$-bounded category" $A$-mod of rational $G$-modules having composition factors of highest weights in $\Lambda_r$.

Now pass to orders, and let $\wA:=\wA_{\Lambda_r}$, where $r\geq r(\Gamma)$ as before.  Given $\gamma\in\Gamma$, by \cite[Thm. \ref{abovediscussion}, Prop. 2.3 \& p. 159]{DS}, we can lift the projective $A$-module $P_r(\gamma)$ to an $\wA$-lattice
$\wP_r(\gamma)$.  Moreover, any such lifting is projective and unique.

Write $\gamma=\gamma_0+p\gamma_1\in \Gamma$, where $\gamma_0\in X_1(T)$  and $\gamma_1\in X(T)_+$. Then $P_1(\gamma_0)\in A_{\Lambda_1}$-mod lifts to a projective module for $\wA_{\Lambda_1}$ and, thus, to a $\wU_\zeta$-lattice $\wP_1(\gamma_0)$.
The
projective module
$P_{r-1}(\gamma_1)\in A_{\Lambda_{r-1}}$ lifts to a $\Dist_\sO(G)$-lattice $\wP^\heartsuit_{r-1}(\gamma_1)$.
Therefore, pulling back through the Frobenius $\wF$ in (2.2.2), we  obtain the $\wU_\zeta$-lattice $(\wP^\heartsuit_{r-1}(\gamma_1))^{[1]}$, denoted $\wP^\heartsuit_{r-1}(\lambda)^{[1]} $ or simply $\wP_{r-1}(\lambda)^{[1]}$ if it is convenient.
 There is a tensor product decomposition
\begin{equation}\label{heart}
\wP_r(\gamma)\cong\wP_1(\gamma_0)\otimes \wP^\heartsuit_{r-1}(\gamma_1)^{[1]}.\end{equation}
(The reductions mod $\pi$ are isomorphic as rational $G$-modules, so they are integrally isomorphic.) The Hopf algebra
structure on $\wU_\zeta$ is required to view (\ref{heart}) as a $\wU_\zeta$-module.

The proof of the following lemma uses the fact that if $\wX$ is a lattice for an integral
quasi-hereditary algebra $\wB$ with the property that $\wX_k$ has a $\Delta$-filtration for the
quasi-hereditary algebra $B=\wB_k$, then $\wX$ has a $\wDelta$-filtration.  This follows immediately
from \cite[Prop. 6.1]{PS11} and a standard Nakayama's lemma argument.  The integral quasi-hereditary algebra will
be $(\wgr\wA)_0$.

\begin{lem}\label{basic} Assume that $p\geq 2h-2$ is odd, and that (\ref{LCF}) holds.

Let $\Gamma\subset X(T)_+$ be a finite  ideal in the poset of $p$-regular weights. Let $\wB:=\wA_\Gamma$. Suppose that $\widetilde\Omega:=\bigoplus_s\wOmega_s$ is a graded $\wgr\wB$-lattice generated in grade $m$, for some integer $m$. View $\wOmega_m$ as a graded $\wgr\wB$-module concentrated in grade $m$, and
assume that $\widetilde\Omega_m(-m)$ has a $\wrDelta$-filtration. (Any $\wrDelta(\mu)$, $\mu\in\Gamma$, may viewed as a $\wgr\wB$-module concentrated in grade 0; see Remark 3.2).

Let $\Lambda=\Lambda_r$ with $r\geq r(\Gamma)$ and set $\wA:=\wA_\Lambda$. The following statements hold.

(a)  If
\begin{equation}\label{syz}\wOmega':=\ker\left(\wgr\wA\otimes_{(\wgr \wA)_0}\wOmega_m\twoheadrightarrow\wOmega\right),\end{equation}
then $\wOmega'$ is a graded $\wgr\wA$-lattice vanishing in grades $\leq m$.
All composition factors of $\wOmega, \wOmega'$ and $\wgr\wA\otimes_{(\wgr\wA)_0}\wOmega_m$
have highest weights in $\Lambda$.

(b) Moreover,
  $\wgr\wA\otimes_{(\wgr \wA)_0}\wOmega_m$ has a graded filtration with sections of the form $\wgr\wP^\sharp(\lambda)\langle m\rangle $, $\lambda\in\Gamma$. Any such  filtration of $\wgr\wA\otimes_{(\wgr\wA)_0}\wOmega_m$ induces
a filtration of $\wOmega_m$ by modules $\wrDelta(\lambda)\langle m\rangle \cong(\wgr \wP^\sharp(\lambda)\langle m\rangle )_m$. All
filtrations of $\wOmega_m$ with sections $\wrDelta(\lambda)\langle m\rangle $, $\lambda\in\Gamma$, arise this way.

(c) Suppose, for all $s\in\mathbb Z$, that  $\wOmega_s$ has a $\wrDelta$-filtration. Then $\wOmega'_s$ also has a $\wrDelta$-filtration.
 \end{lem}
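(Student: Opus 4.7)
The plan has three parts corresponding to (a), (b), (c), with the identification step in (b) being the genuinely new input.

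For part (a), I would argue directly from the construction. The canonical multiplication map
\[
\phi : \wgr\wA \otimes_{(\wgr\wA)_0} \wOmega_m \longrightarrow \wOmega
\]
is graded, viewing $\wOmega_m$ as concentrated in grade $m$. Since $(\wgr\wA)_n = 0$ for $n < 0$, the source vanishes in grades $< m$, and in grade $m$ the map reduces to the identity on $\wOmega_m$; surjectivity in higher grades follows from $\wOmega$ being generated in grade $m$. Hence $\wOmega'$ vanishes in grades $\leq m$. All three modules are $\wgr\wA_\Lambda$-modules, so every composition factor has highest weight in $\Lambda$.

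For part (b), the technical heart is the identification
\[
\wgr\wA \otimes_{(\wgr\wA)_0} \wrDelta(\lambda) \;\cong\; \wgr\wP^\sharp(\lambda), \qquad \lambda \in \Gamma,
\]
where $\wgr\wP^\sharp(\lambda)$ is generated in grade $0$ with grade-$0$ component $\wrDelta(\lambda)$ (a feature of the forced-graded construction from \cite{PS9}, \cite{PS10}). The natural map exists by adjunction; I would check it is an isomorphism by a grade-by-grade dimension count, or equivalently by the projectivity of $\wgr\wP^\sharp(\lambda)$ in the appropriate category of grade-$0$-generated graded $\wgr\wA$-modules. Granting this, I would apply $\wgr\wA \otimes_{(\wgr\wA)_0} -$ to a chosen $\wrDelta$-filtration of $\wOmega_m(-m)$. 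The functor is exact on the subcategory of $\wrDelta$-filtered $(\wgr\wA)_0$-lattices — equivalently, $\Tor_1^{(\wgr\wA)_0}(\wgr\wA, \wrDelta(\lambda)) = 0$ — which I would deduce from the fact that every graded piece of $\wgr\wP^\sharp(\lambda)$ has a $\wrDelta$-filtration (transported from the $\wDelta$-filterability of $\wP^\sharp(\lambda)$ via the forced-graded apparatus). Shifting by $m$ produces the promised filtration with sections $\wgr\wP^\sharp(\lambda)\langle m\rangle$; restricting to grade $m$ reproduces the induced $\wrDelta(\lambda)\langle m\rangle$-filtration of $\wOmega_m$, since $(\wgr\wP^\sharp(\lambda)\langle m\rangle)_m = \wrDelta(\lambda)$. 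For the converse, any $\wrDelta$-filtration of $\wOmega_m$ lifts uniquely through the induction functor by grade-$0$ rigidity.

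For part (c), fix a grade $s$ and consider the short exact sequence of $(\wgr\wA)_0$-lattices
\[
0 \to \wOmega'_s \to \bigl(\wgr\wA \otimes_{(\wgr\wA)_0} \wOmega_m\bigr)_s \to \wOmega_s \to 0.
\]
By part (b) the middle term has a $\wrDelta$-filtration (since each grade of each $\wgr\wP^\sharp(\lambda)$ does), and $\wOmega_s$ has one by hypothesis. I would then invoke the standard closure of $\mathcal F(\wrDelta)$ under kernels of surjections — proved over $k$ by the Ext-vanishing $\Ext^i_{(\wgr A)_0}(X,\rnabla(\mu))=0$ for $X\in\mathcal F(\rDelta)$ applied to the long exact sequence, and transferred to $\sO$ by the Nakayama reduction quoted in the preamble to the lemma (namely \cite[Prop. 6.1]{PS11}, which lifts a $\Delta$-filtration of $\wX_k$ to a $\wDelta$-filtration of $\wX$ over the integral quasi-hereditary algebra $(\wgr\wA)_0$). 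This forces $\wOmega'_s\in \mathcal F(\wrDelta)$. The principal obstacle throughout is the identification in (b) together with the attendant flatness/exactness on $\wrDelta$-filtered modules; once these are secured, (a) is formal and (c) is a standard closure argument.
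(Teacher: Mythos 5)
Part (a) and part (c) of your proposal are essentially the paper's argument. The issue is with part (b), which you correctly identify as the technical heart of the lemma, but where the arguments you sketch do not actually establish the two key facts.

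First, the isomorphism $\wgr\wA\otimes_{(\wgr\wA)_0}\wrDelta(\lambda)\cong\wgr\wP^\sharp(\lambda)$: you propose deducing this either by a ``grade-by-grade dimension count'' or by ``projectivity of $\wgr\wP^\sharp(\lambda)$ in the appropriate category of grade-$0$-generated graded $\wgr\wA$-modules.'' Neither works as stated. The module $\wgr\wP^\sharp(\lambda)$ is not a projective $\wgr\wA$-module (only $\wP_r(\lambda)$, with $r\geq r(\Gamma)$, is $\wA$-projective --- $\wP^\sharp(\lambda)$ is merely $\wfa$-projective, which is a weaker and different thing). And there is no a priori grade-by-grade dimension equality to count without already having the isomorphism, or at least some exactness statement. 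The paper's actual argument forms explicit exact sequences $0\to\wJ^{[1]}\to\wP_{r-1}^\heartsuit(\gamma_1)^{[1]}\to\wDelta(\gamma_1)^{[1]}\to 0$ and $0\to\wP_1(\gamma_0)\otimes\wJ^{[1]}\to\wP_r(\gamma)\to\wP^\sharp(\gamma)\to 0$, the latter $\wfa$-split, and then builds a commutative diagram (the ``train'' diagram) in which the middle vertical map is an isomorphism because $\wP_r(\gamma)$ is $\wA$-projective; the snake lemma then forces the right-hand vertical map to be an isomorphism, but only once the top row is known to be exact. None of this structure appears in your sketch.

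Second, the Tor vanishing $\Tor_1^{(\wgr\wA)_0}(\wgr\wA,\wrDelta(\lambda))=0$: you propose deducing it ``from the fact that every graded piece of $\wgr\wP^\sharp(\lambda)$ has a $\wrDelta$-filtration.'' This is circular --- information about $\wgr\wP^\sharp(\lambda)$ is only relevant to $\Tor_1^{(\wgr\wA)_0}(\wgr\wA,\wrDelta(\lambda))$ after the isomorphism $\wgr\wA\otimes_{(\wgr\wA)_0}\wrDelta(\lambda)\cong\wgr\wP^\sharp(\lambda)$ has been established, which in turn (via the train diagram) needs the Tor vanishing. The paper instead proves the Tor vanishing independently: it shows that $\wgr\wA$, as a right $(\wgr\wA)_0$-module, has a $\wrDelta^\circ$-filtration (via the $\wrDelta^\circ$-filterability of the right standard modules $\wgr\wDelta(\gamma)^\circ$ from \cite[Thm. 5.1]{PS11}, a Nakayama reduction, and \cite[Prop. 6.1]{PS11}), and then invokes the general Tor-vanishing statement for quasi-hereditary algebras (Proposition \ref{appendixprop} in the Appendix). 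You would need to supply this independent argument for the Tor vanishing before the filtration transfer and the closure argument in (c) can proceed.
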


\begin{proof} We begin by proving (b). Let $\gamma=\gamma_0+p\gamma_1\in \Gamma$, where $\gamma_0
\in X_1(T)$ and $\gamma_1\in X_{r-1}(T)$.
Form the exact sequences
$$\begin{cases} (1)\,\,
0\to\wJ^{[1]}\to \wP_{r-1}^\heartsuit(\gamma_1)^{[1]}\to\wDelta(\gamma_1)^{[1]}\to 0,\\
(2)\,\,0\to \wP_1(\gamma_0)\otimes_\sO \wJ^{[1]}\to\wP_r(\gamma)\to \wP^\sharp(\gamma)\to 0\end{cases}
$$
of $\wU_\zeta$-modules. In (1),
$\wJ^{[1]}$ is defined as the kernel of the natural map $\wP^\heartsuit_{r-1}(\gamma_1)^{[1]}\twoheadrightarrow\wDelta(\gamma_1)^{[1]}$.  Then (2)
is a sequence of $\wA$-modules,
obtained (using the Hopf algebra $\wU_\zeta$)  by applying $\wDelta(\gamma_0)\otimes_\sO -$ to (1).   Also, (2) is $\wfa$-split,  since $\wP^\sharp(\gamma)$ is $\wfa$-projective. Hence, (2) remains an exact sequence
in the category $\wgr\wA$-grmod after $\wgr$ is applied.
Observe from (the dual version of) \cite[Lem. 4.1(c)]{PS11}, which uses (\ref{LCF}), that $(\wgr \wP^\sharp(\gamma))_0\cong\rDelta(\gamma)$ as a $(\wgr\wA)_0$-module.
For convenience, put $\wN:=\wP_1(\gamma_0)\otimes_\sO \wJ^{[1]}$, and form the following commutative diagram:
\begin{equation}\label{train}
\begin{CD}
0 @>>> \wgr\wA\otimes(\wgr\wN)_0 @>>>\wgr\wA\otimes(\wgr
\wP_r(\gamma))_0 @>>>\wgr\wA\otimes\wrDelta(\gamma) @>>> 0\\
@. @VVV @VVV @VVV @.\\
0 @>>> \wgr\wN @>>>\wgr\wP_r(\gamma) @>>>\wgr\wP^\sharp(\gamma) @>>> 0\end{CD}
\end{equation}
where $\otimes=\otimes_{(\wgr \wA)_0}$ in the first row. As noted above, the second row is exact, and
we also claim that
the first row is also exact. This will follow provided that
\begin{equation}\label{tor} \Tor_1^{(\wgr\wA)_0}(\wgr\wA,\wrDelta(\gamma))=0.\end{equation}
First, by \cite[Thm. 6.3 ]{PS10}, $\wgr\wA$ is a quasi-hereditary algebra over $\sO$  with poset $\Lambda$
and with standard {\it right}
modules denoted $\wgr\wDelta(\gamma)^\circ$, $\gamma\in\Gamma$. Now we work with the quasi-hereditary algebra $(\wgr\wA)_0$ which
has right standard (resp., costandard) modules $\wrDelta(\tau)^\circ$ (resp., $\wrnabla(\tau)^\circ$), $\tau\in\Gamma$. By \cite[Thm. 5.1]{PS11}, $\wgr\Delta(\gamma)^\circ$ has a $(\wrDelta)^\circ$-filtration. Therefore, $$\Ext^1_{(\wgr \wA)_0}(\wgr\wDelta(\gamma)^\circ,\rnabla(\mu)^\circ)\cong\Ext^1_{(\wgr A)_0}(\wgr\Delta(\gamma)^\circ,\rnabla(\mu)) =0, \quad\forall\mu\in\Gamma.$$
A standard Nakayama's lemma argument gives that
$\Ext^1_{(\wgr \wA)_0}(\wgr\wrDelta(\gamma)^\circ,\wrnabla(\mu)^\circ)=0,$
 which
means that $\wgr\wA$, viewed as a right $(\wgr\wA)_0$, has a $(\wrDelta)^\circ$-filtration by \cite[Prop. 6.1]{PS11}.
Now (\ref{tor}) follows from Proposition \ref{appendixprop} below (applied to the quasi-hereditary
algebra $(\wgr A)_0=((\wgr \wA)_0)_k$).

 The middle and left vertical maps in (\ref{train}) are both surjective maps, since $\wP_1(\gamma_0)$ and
 $\wP_{r-1}(\gamma)$ are projective $\wfa$-modules. %Therefore, they are both tight for $\wA$ by
% \cite[Cor. 3.8]{PS11}.
 Thus, the right hand vertical map is surjective.

 Next, the middle
vertical map in (\ref{train}) is, in fact, an isomorphism, since $\wP_r(\gamma)$ is $\wA$-projective.
The snake lemma now implies that the two remaining vertical maps are injective, hence they are also isomorphisms. (In particular,  we record
the isomorphism
\begin{equation}\label{rail} \wgr\wA\otimes_{(\wgr \wA)_0}\wrDelta(\gamma)\overset\sim\longrightarrow \wgr\wP^\sharp(\gamma)
\end{equation}
which will be used later.)

Consider the surjection $\wgr\wA\otimes_{(\wgr\wA)_0}\widetilde\Omega_m\twoheadrightarrow\widetilde\Omega$ of graded modules.
A $\wrDelta$-filtration of  $\widetilde\Omega_m(-m)$
gives a filtration of $\wgr\wA\otimes_{(\wgr\wA)_0}\widetilde\Omega_m$ with sections $\wgr\wP^\sharp(\gamma)\langle m\rangle $, using Proposition \ref{appendixprop} again and the right hand vertical isomorphism above. This filtration
is a graded filtration of a graded module. Conversely, any graded filtration of $\wgr\wA\otimes_{(\wgr\wA)_0}\widetilde\Omega_m$ results in a graded filtration of $\wOmega_m\cong(\wgr\wA\otimes_{(\wgr\wA)_0}\wOmega_m)_m$ by modules $\wrDelta(\gamma)\cong \wgr \wP^\sharp(\gamma)\langle m\rangle _m$, proving
(b).

 Since each surjection
$$\wgr\wP^\sharp(\gamma)\twoheadrightarrow\wrDelta(\gamma)$$
has kernel with non-zero grades only in grades 1 or higher, it follows that $\widetilde \Omega'$ in
(\ref{syz}) vanishes in grades $\leq m$.  This establishes the first assertion of (a). The last assertion of (a) is clear, and so (a) is proved.

Finally, consider statement (c).  For any $s\in\mathbb Z$, (\ref{syz}) gives a short exact sequence
$$0\to\wOmega'_s\to \wX:=\left(\wgr\wA\otimes_{(\wgr \wA)_0}\wOmega_m\right)_s\to\wOmega_s\to 0,$$
of $(\wgr\wA)_0$-modules in which
$\wOmega_s$ has a $\wrDelta$-filtration by hypothesis. By (b), $\wX$ has a  filtration with sections $(\wgr\wP^\sharp(\lambda)\langle m\rangle )_s$.  Also, \cite[Thm. 3.1]{PS11} implies that $\wgr \wP^\sharp(\lambda)$ has a (graded) $\wgr\wDelta$-filtration. Thus, $\wgr P^\sharp(\lambda)=\overline{\wgr\wP^\sharp(\lambda)}$ has a (graded) $\wgr\Delta$-filtration, and
therefore, by    \cite[Thm 5.1]{PS11}, each section $(\wgr P^\sharp(\lambda))\langle m\rangle _s$ has a (graded) $\wrDelta$-filtration.  Thus, the (graded) $(\wgr \wA)_0$-module $(\wgr \wP^\sharp(\lambda))\langle m\rangle _s$ (concentrated in
grade $s-m$) has
a $\wrDelta$-filtration. Thus, $\wX$ has a $\wrDelta$-filtration, concentrated in grade $s$. Now the long
exact sequence of cohomology and \cite[Prop. 6.1]{PS11} gives that $\wOmega'_s$ has a $\wrDelta$-filtration,
completing the proof. \end{proof}

\medskip\noindent \underline{Proof of Theorem \ref{maintheorem}.} It suffices to prove part (b) of the
theorem. Then part (a) is obtained by base change to the field $k$. Define $\Gamma_0=\Lambda_{r(\Gamma)}$. Having defined $\Gamma_i$, put $\Gamma_{i+1}=\Lambda_{r(\Gamma_i)}$. The $(\Gamma,\wfa)$-projective resolution $\wXi_\bullet \to \wM$ is constructed recursively. Let $\wXi_0= (\wgr\wA)\otimes_{(\wgr\wA)_0}\wM_0$. Let $\wOmega_1$ to be the kernel of the  natural map $\wXi_0\twoheadrightarrow \wM$. Both $\wXi_0$ and $\wOmega_1$ are graded $\wgr\wA$-modules. Since $\fa$ is a Koszul algebra
and $\Xi_0=\wXi_k$ is a graded projective $\fa$-module generated by its grade $0$-component, $\Omega_1:=(\wOmega_1)_k$
is generated by its grade $1$-component. Therefore, by Nakayama's lemma, the graded $\wfa$-module $\wOmega_1$ is
generated  by its grade $1$-component $\wOmega_{1,1}$. In any given grade $s$, $\wOmega_{1,s}$ has a $\wrDelta$-filtration.

Now assume, for a given $i>0$, that graded $\wgr\wA_{\Gamma_{j-1}}$-modules
$\wOmega_j$ and $\wXi_{j-1}$ have been constructed, for $0<j\leq i$, such that

\begin{itemize}
\item[(i)] there is an exact sequence $0\to\wOmega_j\to\wXi_{j-1}\to \wOmega_{j-1}\to 0$ (with $\wOmega_{-1}=\wM)$);

\item[(ii)] $\wOmega_j|_{\wfa}$ and $\wXi_{j-1}|_{\wfa}$ are generated in grades $j$
and $j-1$, respectively;

\item[(iii)] $\wXi_{j-1}$ is a graded $\wgr\wA_j$-module, filtered by graded lattices
with sections $\wgr\wP^\sharp(\gamma)$, $\gamma\in \Gamma_{j-1}=\Lambda_{r(\Gamma_{j-2})}$;

\item[(iv)] $\wOmega_{j,s}$ has a $\wrDelta$-filtration, for each $s\in\mathbb Z$. (Here $\wOmega_{j,s}$
is the grade $s$-component of $\wOmega_j$.)

\end{itemize}

Define $\wXi_i=\wgr\wA_{\Gamma_i}\otimes_{(\wgr\wA_{\Gamma_i})_0}\wOmega_{i,i}$, and
set
$$\wOmega_{i+1}=\ker\left(\wgr\wA_{\Gamma_i}\otimes_{(\wgr\wA_{\Gamma_i)_0}}(\wOmega_{i,i}\to
\wOmega_i\right).$$
Condition (i),
with $i+1$ replacing $i$, clear from construction. Condition (ii) follows from the Koszulity of $\fa$, together
with (ii) for $j\leq i$ and Nakayama's lemma. Parts (iii) and (iv) follow from Lemma \ref{basic}. This completes
the recursive construction and the proof of the theorem.
\qed

\medskip
As a corollary of the proof of Lemma \ref{basic}, we record the following result.

\begin{cor}\label{torcor} Let $\gamma$  be a $p$-regular weight which is $r$-restricted, for some positive integer $r$. Let $\wA=\wA_{\Lambda_r}$ (see (\ref{Lambda})). Then, in the derived categories $D^-(\wgr\wA)$ and $D^-(\wgr A)$, we have
$$\begin{cases} \wgr\wP^\sharp(\gamma)\cong\wgr\wA\otimes^{\mathbb L}\wrDelta(\gamma);\\
\wgr P^\sharp(\gamma)\cong \wgr A\otimes^{\mathbb L}\rDelta(\gamma).\end{cases}
$$
Here $\otimes=\otimes_{(\wgr\wA)_0}$.
\end{cor}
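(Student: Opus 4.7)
The plan is to upgrade the $\Tor_1$-vanishing (\ref{tor}) established inside the proof of Lemma \ref{basic} to the vanishing of $\Tor_i^{(\wgr\wA)_0}(\wgr\wA,\wrDelta(\gamma))$ for all $i\geq 1$. Once that is available, the explicit isomorphism (\ref{rail}),
\[
\wgr\wA \otimes_{(\wgr\wA)_0} \wrDelta(\gamma) \;\overset{\sim}{\longrightarrow}\; \wgr\wP^\sharp(\gamma),
\]
already recorded in the proof of Lemma \ref{basic}, promotes automatically to an isomorphism $\wgr\wA \otimes^{\mathbb L}_{(\wgr\wA)_0}\wrDelta(\gamma) \cong \wgr\wP^\sharp(\gamma)$ in $D^-(\wgr\wA)$, which is the first asserted isomorphism.

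For the higher-Tor vanishing, I would reuse the two ingredients already uncovered in the proof of Lemma \ref{basic}. First, $\wgr\wA$, viewed as a \emph{right} $(\wgr\wA)_0$-module, admits a $(\wrDelta)^\circ$-filtration. Second, $\wrDelta(\gamma)$ is a \emph{left} standard module for the quasi-hereditary algebra $(\wgr\wA)_0$, by Proposition \ref{B0}(b) together with Remark \ref{abovediscussion}. With standard filtrations on opposite sides of the tensor product, Proposition \ref{appendixprop}---applied, as in the derivation of (\ref{tor}), to the quasi-hereditary algebra $(\wgr A)_0=((\wgr\wA)_0)_k$---yields the vanishing of $\Tor_i$ in all positive degrees. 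The second isomorphism, in $D^-(\wgr A)$, then follows by base change along $k\otimes^{\mathbb L}_\sO-$: since the modules involved are $\sO$-flat lattices, the derived tensor product over $(\wgr\wA)_0$ is compatible with reduction modulo~$\pi$, and the integral statement descends to the modular one.

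The main obstacle is to extract all-degree Tor-vanishing from Proposition \ref{appendixprop}, since its use in the proof of Lemma \ref{basic} is recorded only for $\Tor_1$. If its formulation is limited to that single degree, the natural remedy is a dimension-shifting bootstrap: replace $\wrDelta(\gamma)$ by successive syzygies in a $(\wgr\wA)_0$-projective resolution and iterate, using the quasi-hereditary Ext-vanishing between standards and costandards in $(\wgr\wA)_0$ to guarantee that each syzygy admits enough of a filtration by standard modules for the $\Tor_1$-hypothesis to propagate. Once the all-degree vanishing is in hand, the remainder of the argument is formal.
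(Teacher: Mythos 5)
Your argument is correct and is essentially the paper's own: combine the isomorphism (\ref{rail}) with the vanishing of all higher $\Tor$-groups obtained from the $(\wrDelta)^\circ$-filtration of $\wgr\wA$ as a right $(\wgr\wA)_0$-module together with Proposition~\ref{appendixprop}, then deduce the field version. One small point: the hedge about a ``dimension-shifting bootstrap'' is unnecessary, since Proposition~\ref{appendixprop} is already stated for all $n\in\mathbb{N}$, not just $n=1$; the vanishing of $\Tor_n^{(\wgr\wA)_0}\bigl(\wgr\wA,\wrDelta(\gamma)\bigr)$ for all $n\geq 1$ therefore follows directly from that proposition and the filtration, exactly as you describe, and no iteration is required. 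The base-change argument you give for the $\wgr A$ statement is fine (and one could equally run the filtration argument directly over $k$, using the field form of Proposition~\ref{appendixprop}).
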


\begin{proof} This follows from (\ref{rail}) and the proof of (\ref{tor}), for $n\geq 1$ (replacing $\Tor_1$ by $\Tor_n$, and again using Proposition \ref{appendixprop} below). \end{proof}

\section{Filtrations} The main result, Theorem \ref{nextmainresult}, shows, under the hypotheses of
Theorem \ref{maintheorem} that, if $\lambda,\mu$
are $p$-regular weights, the rational $G$-module $$\Ext^n_{G_1}(\rDelta(\lambda),\rnabla(\mu))^{[-1]}$$ has a
$\nabla$-filtration. 

Before beginning the proof of this theorem, we prove the following proposition which has independent
interest and plays a key role in the proof of Theorem \ref{standardtheorem} in \S6. The result is also based on Theorem \ref{maintheorem}, which guarantees the existence of  the resolutions $\Xi_\bullet$
and $\widetilde\Xi_\bullet$ in (a) below.

\begin{prop}\label{acyclic} Assume that $p\geq 2h-2$ is odd and that (\ref{LCF}) holds. Let $\Gamma$ be finite ideal of
$p$-regular weights. Let $M$ be a graded $\wgr A_\Gamma$-module which is $\fa$-linear. Assume that
each grade $M_s$ has a $\rDelta$-filtration and let $\Xi\to M$ be as in display (\ref{Gammaresolution}) as
guaranteed by Theorem \ref{maintheorem}(a).  Similarly, let $\wM$ be a graded $\gr\wA_\Gamma$-module which $\wfa$-linear, and such that each graded $\wM_s$ has $\wrDelta$-filtration. Let $\wXi_\bullet\twoheadrightarrow\wM$ be an integral version of (\ref{Gammaresolution}) as guaranteed by Theorem
\ref{maintheorem}(b).

(a) Then, setting
$A_i=A_{\Gamma_i}$ and $\wA_i=\wA_{\Gamma_i}$,
$$\begin{cases} \Ext^n_{\wgr A_i}(\Xi_i,\rnabla(\gamma))=0;\\
\Ext^n_{\wgr\wA_i}(\wXi_i,\wrnabla(\gamma)=0,\end{cases}$$
for all $i\geq 0$, all positive integers $n$, and all $\gamma\in \Gamma$.

(b) In particular, let $j\geq n$ be nonnegative integers with $j>0$, and let $\Lambda$ be a finite poset of
$p$-regular weights containing $\Gamma_j$. Put $A=A_\Lambda$ and $\wA=\wA_\Lambda$.
 Then
$$\Ext^n_{\wgr A_\Gamma}(M,\rnabla(\lambda))\cong \opH^n(\Hom_{\wgr A}(\Xi^\dagger_\bullet,\rnabla(\lambda)),$$
where $\Xi_\bullet^\dagger=\Xi^{\dagger_j}_\bullet$ is the $j$-truncated resolution (\ref{truncated}).

(c) In addition,
$$\begin{aligned}\grExt^n_{\wgr A_\Gamma}(M,\rnabla(\lambda)\langle r\rangle ) &\cong \opH^n(\grHom_{\wgr A}(\Xi^\dagger_\bullet,\rnabla(\lambda)\langle r\rangle )\\
& \cong \begin{cases} \Hom_{(\wgr A)_0}(\Omega_n/\rad\Omega_n,\rnabla(\lambda)),\quad n=r,\\
0\quad{\text{\rm otherwise.}}\end{cases}\end{aligned}
$$
 \end{prop}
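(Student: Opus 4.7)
The argument for part (a) rests on Corollary \ref{torcor}, which gives derived-category isomorphisms $\wgr P^\sharp(\gamma) \cong \wgr A \otimes^{\mathbb L}_{(\wgr A)_0} \rDelta(\gamma)$ and $\wgr\wP^\sharp(\gamma) \cong \wgr\wA \otimes^{\mathbb L}_{(\wgr \wA)_0} \wrDelta(\gamma)$. The derived tensor-Hom adjunction for the inclusion $(\wgr A)_0 \hookrightarrow \wgr A$ (and its integral analogue) then yields
\[
\Ext^n_{\wgr A}(\wgr P^\sharp(\gamma),\rnabla(\mu)) \cong \Ext^n_{(\wgr A)_0}(\rDelta(\gamma),\rnabla(\mu)),
\]
and the right-hand side vanishes for $n > 0$ because $(\wgr A)_0$ is quasi-hereditary (Proposition \ref{B0} and Remark \ref{abovediscussion}) with standard modules $\rDelta(\gamma)$ and costandards $\rnabla(\mu)$. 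Since each $\Xi_i$ is filtered by graded sections of the form $\wgr P^\sharp(\gamma)\langle j\rangle$ (Definition \ref{basicdefn}(iii)), the long exact sequence of $\Ext$ propagates this vanishing to $\Xi_i$; the integral statement for $\wXi_i$ is obtained verbatim.

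For part (b), the full embedding property recorded in \S2.3 allows the computation to be performed inside $\wgr A$-mod. By (a), each $\Xi_i$ is $\Hom(-,\rnabla(\lambda))$-acyclic, and the truncated complex $\Xi^\dagger_\bullet$ is an exact augmented resolution of $M$ of length $j$. Applying the standard dimension-shifting argument repeatedly along the short exact sequences $0 \to \Omega_{i+1} \to \Xi_i \to \Omega_i \to 0$ identifies $\Ext^n(M,\rnabla(\lambda))$ with $\opH^n(\Hom(\Xi^\dagger_\bullet,\rnabla(\lambda)))$ for every $n \leq j$, as claimed.

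The crucial new input for (c) is the $\fa$-linearity asserted in Theorem \ref{maintheorem}(a), which forces each $\Xi_i$ and each $\Omega_i$ to be generated in grade $i$ as a $\wgr A$-module. Because $\rnabla(\lambda)\langle r\rangle$ sits entirely in grade $r$ and is killed by $(\wgr A)_{\geq 1}$, every graded module map into it must factor through the grade $r$ component of its source after dividing by $(\wgr A)_{\geq 1}$; by linearity that quotient is concentrated in a single grade, so $\grHom(\Xi_i,\rnabla(\lambda)\langle r\rangle) = 0$ and $\grHom(\Omega_j,\rnabla(\lambda)\langle r\rangle) = 0$ whenever $r \neq i$, respectively $r \neq j$. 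Consequently $\grHom(\Xi^\dagger_\bullet,\rnabla(\lambda)\langle r\rangle)$ is nonzero in at most one position; its cohomology vanishes outside $n = r$, and when $n = r$ both adjacent differentials are forced to be zero by the same grading argument, leaving the surviving term $\Hom_{(\wgr A)_0}(\Omega_{n,n},\rnabla(\lambda))$. Since $\Omega_n$ is generated in grade $n$, one has $((\wgr A)_{\geq 1}\Omega_n)_n = 0$, so $\Omega_{n,n}$ canonically identifies with $\Omega_n/\rad\Omega_n$ read in the graded sense, completing the formula. The principal hurdle is the graded bookkeeping in (c): one must carefully track how $\fa$-generation transfers to $\wgr A$-generation and ensure that the operative interpretation of $\rad$ is consistent with the Q-Koszul setting.
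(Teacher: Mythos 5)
Your argument for part (a) is the same as the paper's: both reduce, via the $\wgr P^\sharp$-filtration of $\Xi_i$ and Corollary \ref{torcor}, to the vanishing $\Ext^n_{(\wgr A_i)_0}(\wrDelta(\lambda),\wrnabla(\gamma))=0$ for $n>0$, which holds since $(\wgr A_i)_0$ is quasi-hereditary. For parts (b) and (c) the paper is terse, citing only the Cartan--Eilenberg double-complex spectral sequence as a ``standard argument,'' whereas you supply the content explicitly: dimension shifting along the sequences $0\to\Omega_{i+1}\to\Xi_i\to\Omega_i\to 0$ for (b), and a direct grading computation for (c). These are not ``genuinely different routes''---dimension shifting and the double-complex spectral sequence are interchangeable here---but your (c) makes precise exactly what the paper's reference leaves implicit: because each $\Xi_i$ is generated in grade $i$ while $\rnabla(\lambda)\langle r\rangle$ is an inflated $(\wgr A)_0$-module sitting in grade $r$, every term $\grHom_{\wgr A}(\Xi_i,\rnabla(\lambda)\langle r\rangle)$ with $i\neq r$ vanishes, so the complex is concentrated in one cohomological position. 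One small point you should make explicit to finish the identification of the surviving term: by the recursive construction in Theorem \ref{maintheorem}, one has $\Xi_n=\wgr A\otimes_{(\wgr A)_0}\Omega_{n,n}$ and hence $\Xi_{n,n}=\Omega_{n,n}$, so $\grHom_{\wgr A}(\Xi_n,\rnabla(\lambda)\langle n\rangle)\cong\Hom_{(\wgr A)_0}(\Omega_{n,n},\rnabla(\lambda))\cong\Hom_{(\wgr A)_0}(\Omega_n/\rad\Omega_n,\rnabla(\lambda))$. You assert the last identification but pass silently over the first.
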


\begin{proof}We first prove (a). Consider the integral case of $\gr \wA_i$. The $\gr \wA_i$-module $\wXi_i$ has a filtration
by the modules $\gr \wP^\sharp(\lambda)$, $\lambda\in\Gamma$. Thus, it suffices to show that
$$\Ext^n_{\gr\wA_i}(\wgr\wP^\sharp(\lambda),\wrnabla(\gamma))=0,\,\,\forall n>0,
$$
 Using
Cor. \ref{torcor},
$$\begin{aligned}\Ext^n_{\gr\wA_i}(\wgr\wP^\sharp(\lambda),\wrnabla(\gamma)) &\cong\Hom^n_{D^-(\wgr\wA_i)}(\gr\wP^\sharp(\lambda),\wrnabla(\gamma)) \\ &\cong\Hom^n_{D^-(\gr\wA_i)}(\gr\wA_i\otimes^{\mathbb L}\wrDelta(\lambda),\wrnabla(\gamma))\\
& \cong
\Ext^n_{(\gr \wA_i)_0}(\wrDelta(\lambda),
\wrnabla(\gamma))=0.\end{aligned}$$
  This proves (a) for $\gr\wA_i$. A similar argument
works for $\gr A_i$.

Finally, (b) and (c) follow from a standard  argument, using the spectral sequences associated to the Cartan-Eilenberg double complex resolution of $\Xi^\dagger_\bullet$; see \cite[Summary 5.7.9]{W}.
\end{proof}

We will need  the following preliminary lemma.

\begin{lem}\label{firstlemma} Assume that $p\geq 2h-2$ is odd, and that (\ref{LCF}) holds. Let $X$ be a finite dimensional $G$-module whose composition factors $L(\gamma)$
satisfy $\gamma\in X_{\text{\rm reg}}(T)_+$. Assume $X$  is completely reducible for $G_1$ and has a $\rDelta$-filtration as a $G$-module.
Then $\Hom_{G_1}(X,\rnabla(\gamma))^{[-1]}$ has a $\nabla$-filtration, for any $\gamma\in X(T)_+$. \end{lem}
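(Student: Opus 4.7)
The plan is to use the Lusztig character formula to decompose both $\rnabla(\gamma)$ and the sections of the $\rDelta$-filtration of $X$ into a restricted-weight factor and a Frobenius-twisted factor, after which everything reduces to an elementary $G_1$-homomorphism calculation between restricted irreducibles. Concretely, write $\gamma=\gamma_0+p\gamma_1$ with $\gamma_0\in X_1(T)$. By (\ref{LCF}) and (\ref{Lin}), $\rnabla(\gamma)\cong L(\gamma_0)\otimes\nabla(\gamma_1)^{[1]}$; since $G_1$ acts trivially on the Frobenius twist, the projection formula yields a natural $G$-module isomorphism
$$\Hom_{G_1}(X,\rnabla(\gamma))^{[-1]}\;\cong\; \Hom_{G_1}(X,L(\gamma_0))^{[-1]}\otimes \nabla(\gamma_1).$$
Because tensor products of rational $G$-modules with $\nabla$-filtrations again admit $\nabla$-filtrations (Donkin--Mathieu), it will suffice to exhibit a $\nabla$-filtration of $\Hom_{G_1}(X,L(\gamma_0))^{[-1]}$.

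Next I would exploit the complete reducibility hypothesis on $X|_{G_1}$. Fix a $G$-module filtration of $X$ whose sections have the form $\rDelta(\sigma)$, $\sigma\in X_{\text{\rm reg}}(T)_+$. Every subquotient of $X$ is a sub- or quotient module of the semisimple $G_1$-module $X|_{G_1}$, hence itself semisimple over $G_1$, so the chosen filtration splits upon restriction to $G_1$. Consequently $\Hom_{G_1}(-,L(\gamma_0))$ is exact on this filtration, producing a $G$-module filtration of $\Hom_{G_1}(X,L(\gamma_0))$ whose sections are exactly the modules $\Hom_{G_1}(\rDelta(\sigma),L(\gamma_0))$ as $\sigma$ ranges over the filtration data.

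Now I compute these sections one at a time. Writing $\sigma=\sigma_0+p\sigma_1$, (\ref{LCF}) gives $\rDelta(\sigma)\cong L(\sigma_0)\otimes \Delta(\sigma_1)^{[1]}$. The projection formula, combined with the standard duality $\Delta(\sigma_1)^{\ast}\cong \nabla(\sigma_1^\star)$ (where $\sigma_1^\star=-w_0\sigma_1$), yields
$$\Hom_{G_1}(\rDelta(\sigma),L(\gamma_0))\;\cong\; \Hom_{G_1}(L(\sigma_0),L(\gamma_0))\otimes \nabla(\sigma_1^\star)^{[1]},$$
which is $\nabla(\sigma_1^\star)^{[1]}$ when $\sigma_0=\gamma_0$ and vanishes otherwise. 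Twisting by $[-1]$, the non-zero sections of the resulting filtration of $\Hom_{G_1}(X,L(\gamma_0))^{[-1]}$ are precisely the $\nabla(\sigma_1^\star)$, a genuine $\nabla$-filtration. Combined with the reduction of the first paragraph, this proves the lemma.

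I do not anticipate a serious obstacle: the entire argument is bookkeeping built on the LCF decomposition together with the classical tensor-product theorem for good filtrations. The one point worth checking carefully is that the two instances of the projection formula above are $G$-equivariant and not merely $G_1T$-equivariant, which is automatic because $G_1$ is normal in $G$ and all modules in sight extend to rational $G$-modules.
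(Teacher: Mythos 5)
Your proposal is correct, and it runs on the same engine as the paper's proof: the $\Delta^p/\nabla_p$ decomposition afforded by (\ref{LCF}), the $G_1$-splitness of the $\rDelta$-filtration of $X$ (which makes $\Hom_{G_1}(-,\cdot)$ exact on it), and closure of $\nabla$-filtrations under tensor product. The only cosmetic difference is that you peel off the $\nabla(\gamma_1)^{[1]}$ factor at the outset via the projection formula and then unwind the filtration directly, whereas the paper organizes the identical calculation as a short induction on the length of the $\rDelta$-filtration with the single-section case $X=\rDelta(\gamma')$ as the base step.
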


\begin{proof} The statement is clearly true if $X=\rDelta(\gamma')$, $\gamma\in X_{\text{\rm reg}}(T)_+$, since
$$\Hom_{G_1}(\Delta^p(\gamma'),\nabla_p(\gamma))\cong\begin{cases} \Hom_k(\Delta(\gamma'_1)^{[1]},\nabla(\gamma_1)^{[1]}),\quad \gamma'_0=\gamma_0\\ 0, \quad{\text{\rm otherwise.}}\end{cases}$$
In general, consider a  short exact
sequence $0\to X'\to X\to X^{\prime\prime}\to 0$ of rational $G$-modules in which $X'$ and $X^{\prime\prime}$ are
nonzero modules having
$\rDelta$-filtrations. Observe this sequence is $G_1$-split. By an evident induction argument, we can assume the conclusion of the lemma holds with $X$ replaced by $X'$ or $X^{\prime\prime}$.
Form the exact sequence
$$0\to\Hom_{G_1}(X^{\prime\prime},\rnabla(\gamma))^{[-1]}\to\!\Hom_{G_1}(X,\rnabla(\gamma))^{[-1]}\to
\Hom_{G_1}(X',\rnabla(\gamma))^{[-1]}\!\to 0$$
of rational $G$-modules. By assumption, the right and left hand sides of this sequence
have $\nabla$-filtrations.  Thus, the middle term has a $\nabla$-filtration, as required. \end{proof}

We now establish the main result of this paper, part (a) of Theorem \ref{nextmainresult}. The first step in its proof identifies $\Ext^n_{G_1}(\rDelta(\lambda),\rnabla(\mu))$ as a {\it vector space}
with a rational $G$-module $\Hom_{G_1}(\Omega_n/\rad_\fa\Omega_n,\rnabla(\mu))^{[-1]}$ (in the 
notation of Theorem \ref{maintheorem}), which can be easily
shown to have a $\nabla$-filtration. Thus, it is necessary to show that this identification is an isomorphism of
$G$-modules. This final step, which is delicate, requires the abstract setting of \S8 (Appendix I).

Later, in \S6, part (a) of the theorem below will be extended to
the case in which $\rDelta(\lambda)$ (resp., $\rnabla(\mu)$) is replaced by $\Delta(\lambda)$
(resp., $\nabla(\mu)$); see Theorem \ref{Jantzen} below. Part (b) will be similarly extended in Theorem 6.5
using $\wgr \Delta(\lambda)$ and a dual construction. We note that part (a) of the theorem below assumes
the LCF condition (\ref{LCF}), while part (b) does not. Parallel remarks hold for their respective extensions
in \S6. 

\begin{thm}\label{nextmainresult} Assume that $p\geq 2h-2$ is odd.  Let $\lambda,\mu\in X_{\text{\rm reg}}(T)_+$.

(a) Suppose that condition (\ref{LCF}) holds.  Then, for any integer $n\geq 0$,  the rational $G$-module $\Ext^n_{G_1}(\rDelta(\lambda),\rnabla(\mu))^{[-1]}$  has a $\nabla$-filtration.

(b)
Let $A=A_\Gamma$, for any finite ideal $\Gamma$ of $p$-regular dominant weights containing
$\lambda,\mu$. For any integer $n\geq 0$, there are natural vector space isomorphisms
\begin{equation}\label{equiv1}\begin{aligned}
\Ext^n_{\wgr A}(\rDelta(\lambda),\rnabla(\mu)& )\cong\Ext^n_A(\rDelta(\lambda),\rnabla(\mu))\\
&\cong\Ext^n_G(\rDelta(\lambda),\rnabla(\mu)).\end{aligned}\end{equation}

\end{thm}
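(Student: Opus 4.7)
The plan for part~(a) is to apply Theorem~\ref{maintheorem}(a) with $M=\rDelta(\lambda)$, viewed as a graded $\wgr A$-module concentrated in grade~$0$. The hypotheses hold: the unique nontrivial grade trivially has a $\rDelta$-filtration, and under~(\ref{LCF}) the identification $\rDelta(\lambda)\cong L(\lambda_0)\otimes\Delta(\lambda_1)^{[1]}$ shows $\rDelta(\lambda)|_\fa$ is $\fa$-semisimple, hence $\fa$-linear (as $\fa$ is Koszul). This produces an $\fa$-linear $(\Gamma,\fa)$-projective resolution $\Xi_\bullet\twoheadrightarrow\rDelta(\lambda)$ whose syzygies $\Omega_n$ have $\rDelta$-filtrations grade by grade; by $\fa$-linearity, $\Omega_n/\rad_\fa\Omega_n$ is concentrated in grade~$n$ and equals $(\Omega_n)_n$, which inherits a $\rDelta$-filtration. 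The complex $\Hom_\fa(\Xi_\bullet,\rnabla(\mu))$ has zero differential: the map $d\colon(\Xi_i)_i\to\Xi_{i-1}$ lands in $(\rad_\fa\Xi_{i-1})_i$ by $\fa$-linearity, and $\rnabla(\mu)|_\fa$ is semisimple (again by~(\ref{LCF})), so the composition with any $\Hom_\fa$-map vanishes. Thus, as vector spaces,
\[
\Ext^n_{G_1}(\rDelta(\lambda),\rnabla(\mu))\cong\Hom_\fa\!\bigl((\Omega_n)_n,\rnabla(\mu)\bigr),
\]
and Lemma~\ref{firstlemma} then gives that the Frobenius-untwist of the right-hand side has a $\nabla$-filtration.

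The main obstacle in~(a) is promoting this vector-space identification to an isomorphism of rational $G$-modules. The tension is that $\Xi_\bullet$ lives in $\wgr A$-grmod, which is not a priori a category of $G$-modules, whereas $\Ext^n_{G_1}(\rDelta(\lambda),\rnabla(\mu))$ carries the natural $G/G_1\cong G^{(1)}$-action from $G_1\vartriangleleft G$. My approach is to use the abstract framework of~\S8 (Appendix~I) to match these two actions. Proposition~\ref{B0}(b) and Remark~\ref{abovediscussion} show that the $\rDelta(\gamma)$ and $\rnabla(\gamma)$ appearing as sections are $(\wgr A)_0$-modules whose algebraic action coincides with their rational $G$-action; the crucial verification is that each graded piece of the resolution inherits a compatible rational $G$-structure and that the differentials are $G$-equivariant, so that the identification with $\Hom_{G_1}((\Omega_n)_n,\rnabla(\mu))$ is $G$-equivariant. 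This is the delicate step, and is where the Appendix~I machinery must do real work.

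For part~(b), the isomorphism $\Ext^n_A\cong\Ext^n_G$ is immediate from the full embedding $A\text{-mod}\hookrightarrow G\text{-mod}$ preserving $\Ext^\bullet$ (end of~\S2.3). For $\Ext^n_A\cong\Ext^n_{\wgr A}$ — which must be obtained without~(\ref{LCF}) — the plan is to compare resolutions via the forced-grading construction. Take an $\wfa$-tight $\wA$-projective resolution $\wP_\bullet\twoheadrightarrow\wrDelta(\lambda)$: reduction mod~$\pi$ yields an $A$-projective resolution computing $\Ext^\bullet_A$, while $\wgr\wP_\bullet=\gr\wP_\bullet$ (by $\wfa$-tightness, \S2.5) is a $\wgr\wA$-projective resolution of $\wgr\wrDelta(\lambda)=\wrDelta(\lambda)$, and its reduction mod~$\pi$ computes $\Ext^\bullet_{\wgr A}$. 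Since $\wrad\wfa$ annihilates $\rnabla(\mu)$ (Proposition~\ref{B0}(b)), applying $\Hom(-,\rnabla(\mu))$ to either complex detects only the grade-$0$ quotient of each term, and these coincide on the two sides, yielding the desired natural isomorphism.
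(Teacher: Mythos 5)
For part (a), your argument follows the paper's skeleton closely: apply Theorem~\ref{maintheorem}(a) to $M=\rDelta(\lambda)$ (concentrated in grade~$0$, semisimple and hence linear over the Koszul algebra $\fa$), note that the differentials in $\Hom_\fa(\Xi_\bullet,\rnabla(\mu))$ vanish because $\rnabla(\mu)|_\fa$ is semisimple and the resolution is $\fa$-linear, identify the cohomology with $\Hom_{G_1}(\Omega_n/\rad_\fa\Omega_n,\rnabla(\mu))$, and finish with Lemma~\ref{firstlemma}. But you explicitly defer the decisive step---showing this vector-space identification is $G$-equivariant---to ``Appendix~I machinery'' without carrying it out. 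That is exactly where the paper's proof does its real work: it recursively constructs (via Proposition~\ref{revised} and Lemma~\ref{prelimlemma}) a truncated resolution~(\ref{partial}) of $\rDelta(\lambda)$ by rational $G$-modules $R_i$ that are $\fa$-projective on restriction, compares $\wGr$ of that resolution with $\Xi^{\dagger}_\bullet$ through a commutative diagram, and then invokes Theorem~\ref{secondmain} to identify the two cokernels $G$-equivariantly through the common quotient $\hd^\flat E$. As written, your part~(a) is a correct plan rather than a proof.

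For part~(b), your route is genuinely different and contains a gap. You take an $\wA$-projective resolution $\wP_\bullet\twoheadrightarrow\wrDelta(\lambda)$, observe that $\wgr\wP_i=\gr\wP_i$ for each $i$ by $\wfa$-tightness, and then assert that $\gr\wP_\bullet$ is a $\gr\wA$-projective resolution of $\wrDelta(\lambda)$. The last step does not follow: $\gr$ is not exact on complexes of lattices, because tightness of the individual terms does not make the radical filtrations strict with respect to the differentials. Already in a projective cover $0\to\wK\to\wP\to\wM\to 0$ one has $\wK\subseteq\wrad\wP$, so that $\wK\cap\wrad\wP=\wK$, which differs from $\wrad\wK$ unless $\wK$ happens to be radical-free; the associated graded of the sequence then fails to be exact in the middle. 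This is precisely the problem Appendix~I is designed to circumvent: one builds a resolution of $\rDelta(\lambda)$ inside the category of \emph{admissible hybrid} $A$-modules (Proposition~\ref{revised}(a)), where each term carries a chosen $\fa$-grading compatible with the maps, and then applies the functor $\wGr$---which is \emph{not} the same functor as $\gr$---and which \emph{is} exact by construction on admissible hybrids. Theorem~\ref{secondmain}(a) then gives the identification of $\Ext$-groups. Your proposal conflates $\gr$ with $\wGr$, and that conflation is exactly what the hybrid machinery exists to avoid.
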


\begin{proof} Let $\Gamma$ be a finite ideal of $p$-regular weights containing $\lambda,\mu$. There is
an algebra homomorphism $\fa\to A:=A_\Gamma$ (which is an inclusion if $\Gamma$ is sufficiently large).

 Using Theorem \ref{maintheorem} and noting that $\rnabla(\mu)|_\fa$ is completely reducible, there
 are natural vector space isomorphisms (labelled for further discussion)
\begin{equation}\label{modules}
\begin{aligned}\Ext^n_{G_1}(\rDelta(\lambda),\rnabla(\mu)) &\overset{(1)}\cong \opH^n(\Hom_{\fa}(\Xi_\bullet,\rnabla(\mu))\\
&\overset{(2)}\cong \Hom_\fa(\Omega_n,\rnabla(\mu))\\ &\overset{(3)}\cong\Hom_{G_1}(\Omega_n/\rad_\fa\Omega_n,\rnabla(\mu)).\end{aligned}\end{equation}
The first term in (\ref{modules}) is obviously a rational $G$-module.  On the other hand, the last term $\Hom_{G_1}(\Omega_n/\rad_\fa\Omega_n,\rnabla(\mu))$ on the right is also a rational $G$-module. To see
this, first observe that
$$\rad_\fa\Omega_n=\fa_{\geq 1}\Omega_n=(\fa_{\geq 1}\wgr A)\Omega_n=
(\wgr A)_{\geq 1}\Omega_n.$$
(The first equality follows since $\fa$ is a Koszul algebra.) Now use the isomorphism
 $$\wgr A/(\wgr A)_{\geq 1}\cong A/A_{\geq 1}$$ to make $\Omega_n/\rad_\fa\Omega_n$ an
 $A$-module, thus, a $\Dist(G)$-module, and, finally, a rational $G$-module.

 Next, $\Omega_n\rad_\fa\Omega_n\cong \Omega_{n,n}$ (the grade $n$-component of $\Omega_n$) has,
 by Theorem \ref{maintheorem}(a), a $\rDelta$-filtration.
Thus,  Lemma \ref{firstlemma} implies that $\Hom_{G_1}(\Omega_n/\rad_\fa\Omega_n,\rnabla(\mu))^{[-1]}$
has a $\nabla$-filtration.

Therefore, (a) will follow provided the composite
\begin{equation}\label{composite}(3)\circ(2)\circ(1):\Ext^n_{G_1}(\rDelta(\lambda),\rnabla(\mu))\to\Hom_{G_1}(\Omega_n/\rad_\fa\Omega_n,
\rnabla(\mu))\end{equation}
of the vector space isomorphisms (1), (2), (3) in (\ref{modules}) is a morphism of
rational $G$-modules. While most readers will expect this to be true, a rigorous proof requires the
constructions of \S8 (Appendix 1) below.

First, general methods imply that the left hand side of (\ref{composite}) can be calculated, {\it as a rational $G$-module}, by {\it any}
a truncated resolution
\begin{equation}\label{partial}
0\to E\longrightarrow R_{n-1}\longrightarrow \cdots\longrightarrow R_0\longrightarrow \rDelta(\lambda)\to 0
\end{equation}
of $\rDelta(\lambda)$ by $A$-modules such that $R_i|_\fa$ is $\fa$-projective, $i=0, \cdots, n-1$. Here
we use the fact that  the category $A$-mod is the same as the category of finite dimensional
rational $G$-modules having composition factors $L(\gamma)$, $\gamma\in\Gamma$. (This statement
holds for any poset ideal $\Gamma$. As we see below, the current $\Gamma$ may need to be enlarged to
for any given $n$, to make the resolution construction possible.)
That is,
\begin{equation}\label{little}
\Ext^n_\fa(\rDelta(\lambda),\rnabla(\mu))\cong\ck(\Hom_\fa(R_{n-1},\rnabla(\mu))\longrightarrow
\Hom_\fa(E,\rnabla(\mu)))\end{equation}
in $G$-mod.\footnote{The isomorphism as vector spaces is elementary. Usually, given rational $G$-modules
$M,N$, the $G$-module structure of the spaces $\Ext_\fa^\bullet(M,N)$ is obtained by computing these
$\Ext$-groups using a $G$-injective resolution $N\to I_\bullet$ of $N$. Necessarily, each $I_j$ is also
$\fa$-injective, defining a rational $G$-module structure on $\Ext^n_\fa(M,N)$, regarded as the $n$-cohomology
of the complex $\Hom_\fa(M,I_\bullet)$. To see this $G$-action agrees with that defined by
the isomorphism (\ref{little}), temporarily denote $E$ by $R_n$ and form the double complex $\Hom(R_p,I_q)$.
Its total complex provides an $\fa$-injective resolution of $\Hom_k(\rDelta(\lambda),\rnabla(\mu))$.  Both
spectral sequences of the double complex collapse, one defining the usual $G$-action and the other
defining the action using (\ref{little}). Thus, the two actions are the same.}

We recursively construct such a resolution (\ref{partial}),  in order to establish that the map (\ref{composite})
is a homomorphism of rational $G$-modules. The argument will use the
results from \S8 (Appendix I). This appendix is written in an abstract framework, though with a recursive construction of (\ref{partial}) in mind. We try to give enough details to enable the reader to make the connection.See also Remark \ref{lastlastremark}. The construction
will be used in the proof of (b) below and, again, in Theorem \ref{GExt}.  Before getting started, we
note the following lemma. Its proof also requires results from Appendix I.

\begin{lem}\label{prelimlemma} Suppose $\Xi_\bullet^\dagger=\Xi^{\dagger_n}$ is a $n$-truncated complex as in (\ref{truncated})
in $\wgr A_{\Gamma_n}$-mod, so that $\Xi^\dagger_\bullet\twoheadrightarrow M$ is a graded resolution of
a $\wgr A_{\Gamma}$-module $M$, and so that each $\Xi^\dagger_j|_\fa$ ($j<n$) is projective. Assume that $M$ is
$\fa$-linear and that $\Xi^\dagger_\bullet |_\fa$ is part of a linear (graded) $\fa$-resolution.

Let $\Xi_\bullet^{\dagger\prime}$ be a second $n$-truncated resolution of $M$ with the same
properties as listed above for $\Xi_\bullet^\dagger$. Then $\Xi_\bullet^\dagger\cong\Xi_\bullet^{\dagger\prime}$
as  graded $\wgr A_{\Gamma_n}$-resolutions of $M$.\end{lem}

\begin{proof} For the proof, simply break each complex into short exact sequences, e.~g.,
$0\to\Omega_j\to\Xi_{j-1}\to\Omega_{j-1}\to 0$ and apply Theorem \ref{mainSecThm}(d)
with $B=\wgr A_{\Gamma_j}$, for various choices of $j$. Observe that, by construction, $\Xi_j$ has a projective
cover in $B$-grmod which remains projective upon restriction to $\fa$. (This observation follows
easily from the discussion involving
\cite[p.333]{JanB} after the statement of Theorem \ref{maintheorem}.)\end{proof}

For the first step of
the construction, put $N=\rDelta(\lambda)$ and replace $A$ by $A_{\Gamma_1}$, where
$\Gamma_1:=\Lambda_1(\Gamma)$ (so that $\fa\subset A$).
Proposition \ref{revised} gives a
short exact sequence $0\to E\to R\to N\to 0$ in $A$-mod and, upon restricting to $\fa$, in  $\fa$-grmod. Here $R|_\fa$ is projective in $\fa$-grmod. Moreover, all the objects
$X$ in this sequence have the property that $X_{\geq s}$, $s\in\mathbb Z$, (as defined by the $\fa$-grading on $X$)
are all $A$-modules, so we may construct a graded $\wgr A$-module
$$\wGr X=\bigoplus_{s\in\mathbb Z} X_{\geq s}/X_{\geq s+1}.$$
This construction guarantees that $\wGr X|_\fa\cong X|_\fa$ in $\fa$-grmod.
Also, according to Proposition \ref{revised}, there
is exact sequence $0\to E' \to R'\to N'\to 0$ in $\wgr A$-mod and in $\fa$-grmod, where $N'$ (at the moment) is just $N$, $E'$ is
a certain quotient of $\wGr E$  (in $A$-mod or in $\fa$-grmod) which is $\fa$-linear of degree 1 (in fact, the
maximal such linear quotient). The conditions in Proposition \ref{revised}  guarantee the hypotheses of Lemma \ref{prelimlemma}
hold, for $m=n=1$.    In particular, $\Omega_1\cong E'$ in $\wgr A$-grmod (and in $\fa$-grmod).

Now enlarge $A$ to $A_{\Gamma_2}$, where $\Gamma_2=\Lambda_1(\Gamma_1)$. Repeat
the argument with $N$ replaced by $E$. The new $N'$ will not be the same as $E$, but will be
$E'$. Continuing in this way, we obtain the sequence (\ref{partial}) in $A$-mod, for $A=A_\Lambda$
(for some large $\Lambda$). It is also an exact sequence in $\fa$-grmod.  The top row of the commutative diagram
$$\begin{CD}
0 @>>> \wGr E\ @>>> \wGr R_{n-1} @>>> \cdots @>>> \wGr R_0 @>>>\rDelta(\lambda) @>>> 0\\
@. @VVV @VVV @. @VVV @| \\
0 @>>> \Omega_n @>>> \Xi_{n-1} @>>> \cdots @>>> \Xi_0 @>>> \rDelta(\lambda) @>>>  0.
\end{CD}
$$
is exact in $\wgr A$-grmod.  The bottom row is just $\Xi_\bullet^\dagger$ (obtained by repeatedly applying
Lemma \ref{prelimlemma}).  Notice $\Omega_n\cong E'$ in Theorem \ref{secondmain}(b), by its recursive construction.  By Theorem \ref{secondmain}, there is a natural  isomorphism
$$\ck(\Hom_\fa(R_{n-1},\rnabla(\mu))\to\Hom_\fa(E,\rnabla(\mu)))\cong
\Hom_\fa(\Omega_n,\rnabla(\mu)),$$
easily seen to preserve the $G$-action on both sides. The key
point is that $\wGr E$ and $E$, as well as $\wGr R_{n-1}$ and $R_{n-1}$ share a (large) common
quotient $\hd^{\flat}E$ in $A/A_{\geq 1}$-mod. This gives the isomorphism (\ref{composite}). This proves (a).

The proof of (b) relies on  a similar construction, and uses Proposition \ref{revised}(a) and Theorem \ref{secondmain}(a).
As is well-known, the identification of $\Ext$-groups as cokernels such as those appearing in Theorem \ref{secondmain}(a) works equally
using projective resolutions or resolutions acyclic for $\Ext^\bullet_{\wgr A}(-,\rnabla(\mu))$ or
$\Ext^\bullet_A(-,\rnabla(\mu))$.   Since we are dealing with quasi-hereditary
algebras, it is enough that each $R_j$ be projective for $A_{\Gamma_j}$ for an ideal $\Gamma_j$
in $\Gamma$ with $\mu\in\Gamma_j$.
 \end{proof}

\begin{rem} As proved in \cite[\S5]{CPS7}, for $\lambda,\mu\in\Jan$, $\dim\Ext^n_G(\rDelta(\lambda),\rnabla(\mu))$ can be computed as a
appropriate coefficient of a (parabolic) Kazhdan-Lusztig polynomial when (\ref{LCF}) holds. The dimension agrees with the corresponding
$\dim\Ext^n_{U_\zeta}(L_\zeta(\lambda),L_\zeta(\mu))$ for the quantum enveloping algebra. See also
\S7 below, for a related study of costandard module multiplicities in $\dim\Ext^n_{G_1}(\rDelta(\lambda),\rnabla(\mu)).$ \end{rem}

\begin{thm}\label{lasttheorem}Assume that $p\geq 2h-2$ is odd, and that (\ref{LCF}) holds. Let $\lambda,\mu\in X_{\text{\rm reg}}(T)_+$, and let $A=A_\Gamma$ for some  be finite ideal
$\Gamma$ of $p$-regular
weights containing $\lambda,\mu$. Then, for any nonnegative integer $n$
and any integer $r$,
$$\grExt^n_{\wgr A}(\rDelta(\lambda),\rnabla(\mu)\langle r\rangle )\not=0\implies r=n.$$ \end{thm}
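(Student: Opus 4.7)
The plan is to reduce directly to Proposition~\ref{acyclic}(c), applied with $M := \rDelta(\lambda)$ regarded as a graded $\wgr A$-module concentrated in grade~$0$. Indeed, if the hypotheses of that proposition are satisfied, then the displayed formula there gives
$$\grExt^n_{\wgr A}(\rDelta(\lambda),\rnabla(\mu)\langle r\rangle)\;\cong\;\begin{cases}\Hom_{(\wgr A)_0}(\Omega_n/\rad\Omega_n,\rnabla(\mu)), & n=r,\\ 0, & \text{otherwise,}\end{cases}$$
which is exactly the desired vanishing. So the task reduces to checking that $M=\rDelta(\lambda)$ fits the hypothesis of Proposition~\ref{acyclic}(c).

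First I would check the easy hypotheses. By Proposition~\ref{B0}(b), $\rDelta(\lambda)$ is a standard module for $(\wgr A)_0$, so it becomes a graded $\wgr A$-module through the quotient map $\wgr A\twoheadrightarrow(\wgr A)/(\wgr A)_{\geq 1}\cong (\wgr A)_0$; viewed this way it is concentrated in grade $0$. The requirement that each grade $M_s$ admit a $\rDelta$-filtration is then trivial, since $M_0=\rDelta(\lambda)$ is the only nonzero grade. One also needs $\rDelta(\lambda)\in A_\Gamma$-mod: writing $\lambda=\lambda_0+p\lambda_1$, under (\ref{LCF}) one has $\rDelta(\lambda)=L(\lambda_0)\otimes\Delta(\lambda_1)^{[1]}$, whose composition factors are of the form $L(\lambda_0+p\tau)$ with $\tau$ a weight of $\Delta(\lambda_1)$; all such highest weights are $p$-regular (the quantity $(\lambda_0+p\tau+\rho,\alpha^\vee)\bmod p$ is independent of $\tau$) and lie below $\lambda$ in the dominance order, hence belong to the ideal $\Gamma$.

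The substantive point, and the one I expect to be the main obstacle, is verifying the $\fa$-linearity of $M|_\fa=\rDelta(\lambda)|_\fa$. Here I would use that $p\geq 2h-2$ together with $p$ odd forces $p>h$, so that (given (\ref{LCF})) the algebra $\fa=u'$ is Koszul by \cite{AJS}; by definition of Koszulity, each irreducible $\fa$-module, viewed as graded in pure grade $0$, is linear, and this property is preserved by finite direct sums. Since $G_1$ acts trivially on any Frobenius twist, $\Delta(\lambda_1)^{[1]}|_{G_1}$ is just a trivial module of dimension $\dim\Delta(\lambda_1)$, and therefore
$$\rDelta(\lambda)|_\fa\;\cong\;L(\lambda_0)^{\oplus\dim\Delta(\lambda_1)}$$
as $\fa$-modules. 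This is a direct sum of copies of a single irreducible $\fa$-module, so it is $\fa$-linear.

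With all hypotheses of Proposition~\ref{acyclic}(c) verified, the conclusion of that proposition delivers the theorem. The essential role of the LCF hypothesis is precisely in the last step: without (\ref{LCF}) one would only know that $\rDelta(\lambda)|_\fa$ is built out of $\rDelta(\lambda_0)|_\fa$ rather than $L(\lambda_0)$, and linearity would no longer follow directly from Koszulity of $\fa$.
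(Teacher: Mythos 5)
Your proof is correct and follows essentially the same route as the paper, whose proof of this theorem simply says ``as in the proof of the previous theorem'' and then applies the observation that $\Omega_n/\rad\Omega_n$ is pure of grade $n$; the substance of that shortcut is exactly Proposition~\ref{acyclic}(c), which you invoke directly. Your explicit verification of the $\fa$-linearity of $\rDelta(\lambda)|_\fa$ (via $\rDelta(\lambda)\cong L(\lambda_0)\otimes\Delta(\lambda_1)^{[1]}$ under (\ref{LCF}), restricting to a semisimple $\fa$-module, and Koszulity of $\fa$) and of the $\rDelta$-filtration condition makes transparent what the paper leaves implicit when applying Theorem~\ref{maintheorem}.
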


\begin{proof} As in the proof of the previous theorem, for any integer $r$,
$$\grExt^n_{\wgr A}(\rDelta(\lambda),\rnabla(\mu)\langle r\rangle )\cong{\text{\rm hom}}_{(\wgr A)_0}(\Omega_n/\rad\Omega_n,
\rnabla(\mu)\langle r\rangle )).$$
But $\Omega_n/\rad\Omega_n$ is pure of grade $n$, so if $\grExt^n_{\wgr A}(\rDelta(\lambda),\rnabla(\mu)\langle r\rangle )\not=0$, then $r=n$.\end{proof}

\medskip\noindent\underline{Proof of Theorem \ref{QKoszultheorem}:} First, by Proposition \ref{B0}
applied to $B=\wgr A$,
$(\wgr A)_0$ is quasi-hereditary with standard (resp., costandard) modules $\Delta^0(\lambda)=
\rDelta(\lambda)$ ($\nabla_0(\lambda)=\rnabla(\lambda)$), $\lambda\in\Lambda$. Thus, condition
(i) follows in Definition \ref{QKoszul}. Finally, condition (ii) is implied by Theorem \ref{lasttheorem}.
This completes the proof.  $\Box$

\medskip

When $n=r$ in the theorem, the value of $\dim\grExt^n_{\wgr A}(\rDelta(\lambda),
\rnabla(\mu)\langle r\rangle )$ can thus be calculated in terms coefficients of Kazhdan-Lusztig polynomials;
 see \cite[Thm. 5.4]{CPS7}, which gives the corresponding calculation of $\Ext^n$.

\begin{proof} This follows from Theorem \ref{lasttheorem} and the fact that $\rDelta(\lambda)$ is irreducible,
for $\lambda\in\Jan$.\end{proof}

\section{Further filtrations} This section gives certain variations on the results of \S5. Explicitly,
Theorem \ref{Jantzen} shows that if $\lambda,\mu$ are $p$-regular dominant weights, then the
$G$-modules
$\Ext^m_{G_1}(\Delta(\lambda),\rnabla(\mu))^{[-1]}$  and $\Ext^m_{G_1}(\rDelta(\lambda),\nabla(\mu))^{[-1]}$ have  $\nabla$-filtrations, for all  $m\geq 0$.  We also present proofs of Theorem \ref{standardtheorem} and its Corollary \ref{KLtheory}. This result requires Theorem \ref{DeltaKoszul} which shows that
each $\nabla(\nu)$ can be naturally viewed as a graded $\fa$-module, and that, as such, it is $\fa$-linear.

 In the following lemma, $B$ is a quasi-hereditary algebra with weight poset $\Lambda$, standard (resp., costandard) modules $\Delta(\lambda)=\Delta^B(\lambda)$ (resp., $\nabla(\lambda)=\nabla_B(\lambda)$), $\lambda\in\Lambda$. This lemma will be applied to the representation theory of $G$ in Theorem \ref{Jantzen}.

\begin{lem}\label{preplemma}Let $M\to N$ be a homomorphism of $B$-modules. Assume that $M$ has a $\nabla$-filtration,
and that
\begin{equation}\label{surjection} \Hom_B(\Delta(\sigma),M)\to\Hom_B(\Delta(\sigma),N)\quad{\text{\rm
is surjective}}\,\,\forall\sigma\in\Lambda.\end{equation}
Then $N$ has a $\nabla$-filtration, and the map $M\to N$ is surjective.\end{lem}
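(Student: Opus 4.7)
The plan is to exploit the standard homological characterization of $\nabla$-filtrations in a quasi-hereditary category: a $B$-module $X$ admits a $\nabla$-filtration if and only if $\Ext^1_B(\Delta(\sigma), X)=0$ for every $\sigma\in\Lambda$, and in that case $\Ext^i_B(\Delta(\sigma),X)=0$ for all $i\ge 1$. I would factor the given map as $f: M \twoheadrightarrow I \hookrightarrow N$ with kernel $K$ and cokernel $C$, and then show in turn that (i) $K$ has a $\nabla$-filtration, (ii) $C=0$, and (iii) $N$ has a $\nabla$-filtration.

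First, I would upgrade hypothesis (\ref{surjection}) to the statement that $\Hom_B(\Delta(\sigma),M)\twoheadrightarrow \Hom_B(\Delta(\sigma),I)$ for every $\sigma$. This is because every $\Delta(\sigma)\to N$ that lifts to $M$ automatically factors through $I$; so the surjection onto $\Hom_B(\Delta(\sigma),N)$ factors as $\Hom_B(\Delta(\sigma),M)\to\Hom_B(\Delta(\sigma),I)\hookrightarrow\Hom_B(\Delta(\sigma),N)$, forcing both maps to be onto and in particular $\Hom_B(\Delta(\sigma),I)=\Hom_B(\Delta(\sigma),N)$. Applying $\Hom_B(\Delta(\sigma),-)$ to $0\to K\to M\to I\to 0$, and using $\Ext^1_B(\Delta(\sigma),M)=0$ (since $M$ has a $\nabla$-filtration), this surjectivity gives $\Ext^1_B(\Delta(\sigma),K)=0$ for all $\sigma$. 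Hence $K$ has a $\nabla$-filtration, so $\Ext^i_B(\Delta(\sigma),K)=0$ for every $i\ge 1$.

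Next, continuing in the long exact sequence from $0\to K\to M\to I\to 0$, the vanishings $\Ext^1_B(\Delta(\sigma),M)=0$ and $\Ext^2_B(\Delta(\sigma),K)=0$ give $\Ext^1_B(\Delta(\sigma),I)=0$. Combined with the identity $\Hom_B(\Delta(\sigma),I)=\Hom_B(\Delta(\sigma),N)$ from the first step, the long exact sequence for $0\to I\to N\to C\to 0$ then yields $\Hom_B(\Delta(\sigma),C)=0$ for every $\sigma\in\Lambda$. But any nonzero $B$-module receives a nonzero map from some $\Delta(\sigma)$ (choose $L(\sigma)$ in the socle of $C$ and precompose with $\Delta(\sigma)\twoheadrightarrow L(\sigma)$), so $C=0$ and $f$ is surjective.

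Finally, with $N=M/K$ and the short exact sequence $0\to K\to M\to N\to 0$, the long exact sequence and the vanishings $\Ext^1_B(\Delta(\sigma),M)=0=\Ext^2_B(\Delta(\sigma),K)$ force $\Ext^1_B(\Delta(\sigma),N)=0$, so $N$ has a $\nabla$-filtration. The only genuinely conceptual point is the first one: the hypothesis really does say more than $\Hom_B(\Delta(\sigma),M)\twoheadrightarrow\Hom_B(\Delta(\sigma),N)$, since the lift through $M$ automatically lands in the image, and this is what feeds all subsequent $\Ext$-cascading. Everything after that is routine long exact sequence chasing against the quasi-hereditary vanishing $\Ext^{\ge 1}_B(\Delta(\sigma),-)=0$ on modules with $\nabla$-filtrations.
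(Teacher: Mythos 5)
Your argument is correct, and it takes a genuinely different route from the paper's. The paper proceeds by induction on the cardinality of $\Lambda$: it peels off a maximal weight $\lambda$, sets $\Gamma=\Lambda\setminus\{\lambda\}$, applies the inductive hypothesis to the map $M^\Gamma\to N^\Gamma$ of modules over the quotient quasi-hereditary algebra $B'=B/J$, and then compares the top slices $M/M^\Gamma$ and $N/N^\Gamma$ directly, using that $\Delta(\lambda)$ is projective and $\nabla(\lambda)$ is injective in $B$-mod for maximal $\lambda$ to show that $M/M^\Gamma\to N/N^\Gamma$ is a split surjection of $\nabla(\lambda)$-isotypic modules. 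Your approach instead leans entirely on the homological characterization of $\nabla$-filtered objects in a highest weight category, namely that $X$ has a $\nabla$-filtration if and only if $\Ext^1_B(\Delta(\sigma),X)=0$ for all $\sigma\in\Lambda$, together with the consequent vanishing of all higher $\Ext^i_B(\Delta(\sigma),X)$. Factoring the map through its image $I$ and observing that the hypothesis forces $\Hom_B(\Delta(\sigma),M)\twoheadrightarrow\Hom_B(\Delta(\sigma),I)$ with $\Hom_B(\Delta(\sigma),I)\overset{\sim}{\to}\Hom_B(\Delta(\sigma),N)$ is the one non-routine observation; everything after is a clean cascade of long exact sequences. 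What your version buys is the elimination of the explicit induction and of any need to analyze the structure of the maximal-weight quotients. What the paper's version buys is a more concrete, structural picture of how the $\nabla$-filtration of $N$ is assembled layer by layer, which may be what the authors wanted on display. Both use only standard facts about quasi-hereditary algebras and are of comparable length; the choice is a matter of taste.
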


\begin{proof}Let $\lambda\in\Lambda$ be maximal, and put $\Gamma:=\Lambda\backslash\{\lambda\}$.
Let $M^\Gamma,N^\Gamma$ be the largest submodules of $M,N$, respectively, with all composition
factors in $L(\gamma)$, $\gamma\in\Gamma$. By induction, we may assume the result is true for quasi-hereditary algebras having posets
of smaller cardinality that that of $\Lambda$. In particular, if $J$ is the annihilator in $B$ of all modules
with composition factors $L(\gamma)$, $\gamma\in \Gamma$, then the lemma holds for the quasi-hereditary algebra $B':=B/J$. Thus,  $N^\Gamma\in B'$- has a $\nabla_{B'}$-filtration and the map $M^\Gamma\to N^\Gamma$ is surjective.  However, standard and costandard modules in $B'$-mod inflate to standard and
costandard modules in $B$-mod, so $N^\Gamma$ has a $\nabla$-filtration in $B$-mod, as well.
Now form the commutative diagram:

\[
\begin{CD}
\Hom_B(\Delta(\lambda),M) @>>a> \Hom_B(\Delta(\lambda),N)\\
@VVbV                               @VVcV\\
\Hom_B(\Delta(\lambda), M/M^\Gamma) @>>d>\Hom_B(\Delta(\lambda),N/N^\Gamma).
\end{CD}
\]
By hypothesis (\ref{surjection}), the map $a$ is surjective. Since $\lambda$ is maximal, $\Delta(\lambda)$
is projective in $B$-module, so that maps $b$ and $c$ are both surjective. Next, for $\sigma,\tau\in\Lambda$, $\Ext^1_B(\nabla(\sigma),\nabla(\tau))\not=0$ implies that $\sigma>\tau$. Thus, because $M$ has a
$\nabla$-filtration,  $M/M^\Gamma$
is a direct sum of copies of the injective module $\nabla(\lambda)$ which has socle $L(\lambda)$. On the
other hand, clearly $N/N^\Gamma$ has socle which is  a direct sum of copies of $L(\lambda)$. It follows
the socle of $M/M^\Gamma$ maps surjectively onto the socle of $N/N^\Gamma$. Thus, we can choose
a direct summand $X$ of $M/M^\Gamma$ which
maps isomorphically onto a submodule of $N/N^\Gamma$ containing the socle of $N/N^\Gamma$.
Since $X$ is injective, $X\cong N/N^\Gamma$. It follows that $N/N^\Gamma$ is isomorphic to
a direct sum of copies of $\nabla(\lambda)$. Since $N^\Gamma$ has a $\nabla$-filtration,
it follows that $N$ has a $\nabla$-filtration.

Finally, since we have shown that $M^\Gamma\to N^\Gamma$ and $M/M^\Gamma\to N/N^\Gamma$ are
surjective maps, it follows that $M\to N$ is surjective. This completes the proof.
\end{proof}

We are now ready to prove the following result.

\begin{thm}\label{Jantzen} Assume that $p\geq 2h-2$ is odd and that (\ref{LCF}) holds.
Let $\nu,\mu\in X_{\text{\rm reg}}(T)_+$ and $m\geq 0$.

  (a) The rational $G$-module $\Ext^m_{G_1}(\Delta(\nu),\rnabla(\mu))^{[-1]}
$ has a $\nabla$-filtration and the natural map
\begin{equation}\label{Jantzentheorem}
\Ext^m_{G_1}(\rDelta(\nu),\rnabla(\mu))\to\Ext^m_{G_1}(\Delta(\nu),\rnabla(\mu))\end{equation}
induced by $\Delta(\nu)\twoheadrightarrow\rDelta(\nu)$  is surjective.

(b)  Dually, the rational $G$-module $\Ext^m_{G_1}(\rDelta(\mu),\nabla(\nu))^{[-1]}$ has a $\nabla$-filtration
 and the natural map
 \begin{equation}\label{Jantzentheorem2}
 \Ext^m_{G_1}(\rDelta(\mu),\rnabla(\nu))\to\Ext^m_{G_1}(\rDelta(\mu),\nabla(\nu))\end{equation}
 induced by $\rnabla(\nu)\hookrightarrow\nabla(\nu)$ is surjective.
\end{thm}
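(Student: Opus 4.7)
The plan is to prove both (a) and (b) by simultaneous induction on $m \geq 0$, reducing at each step to the $G$-cohomology surjectivity statements of Lemma~\ref{ERlemma} via the Lyndon--Hochschild--Serre spectral sequence for $G_1 \triangleleft G$, and then applying Lemma~\ref{preplemma}. For (a), set $M_m := \Ext^m_{G_1}(\rDelta(\nu),\rnabla(\mu))^{[-1]}$ and $N_m := \Ext^m_{G_1}(\Delta(\nu),\rnabla(\mu))^{[-1]}$. Theorem~\ref{nextmainresult} already equips $M_m$ with a $\nabla$-filtration, and the surjection $\Delta(\nu) \twoheadrightarrow \rDelta(\nu)$ (valid under (\ref{LCF})) induces a natural map $M_m \to N_m$. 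Choosing a finite ideal $\Gamma$ of dominant weights large enough to support $M_m$ and $N_m$, it suffices by Lemma~\ref{preplemma} (applied with $B = A_\Gamma$) to verify that $\Hom_G(\Delta(\sigma),M_m) \to \Hom_G(\Delta(\sigma),N_m)$ is surjective for every $\sigma \in \Gamma$.

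The key reduction uses the spectral sequence
\[
E_2^{p,q}(V) = \Ext^p_G\bigl(\Delta(\sigma),\Ext^q_{G_1}(V,\rnabla(\mu))^{[-1]}\bigr) \;\Longrightarrow\; \Ext^{p+q}_G\bigl(V \otimes \Delta(\sigma)^{[1]},\rnabla(\mu)\bigr),
\]
applied with $V \in \{\rDelta(\nu),\Delta(\nu)\}$. Theorem~\ref{nextmainresult} (for the first choice of $V$, in all degrees $q$) together with the inductive hypothesis on (a) (for the second choice of $V$, in degrees $q < m$) force the coefficient modules to have $\nabla$-filtrations; since $\nabla$-filtered rational $G$-modules are $\Hom_G(\Delta(\sigma),-)$-acyclic, the spectral sequence collapses in total degree $m$ and, after tensor--hom adjunction, yields a natural isomorphism
\[
\Hom_G\bigl(\Delta(\sigma),\Ext^m_{G_1}(V,\rnabla(\mu))^{[-1]}\bigr) \;\cong\; \Ext^m_G\bigl(V,\, W\bigr), \qquad W := \rnabla(\mu)\otimes\nabla(\sigma^\star)^{[1]}.
\]
It remains to establish surjectivity of $\Ext^m_G(\rDelta(\nu),W) \twoheadrightarrow \Ext^m_G(\Delta(\nu),W)$.

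For this, I would analyze $W$: combining the LCF identification $\rnabla(\mu) = L(\mu_0)\otimes\nabla(\mu_1)^{[1]}$ from (\ref{Lin}) with Mathieu's theorem applied to $\nabla(\mu_1)\otimes\nabla(\sigma^\star)$ produces a filtration of $W$ whose sections are modules $\rnabla(\mu_0 + p\gamma)$; as $\mu_0$ is $p$-regular, so is each $\mu_0 + p\gamma$, and consequently an appropriate shift of $W$ lies in $\sE^R$. Lemma~\ref{ERlemma}(1) then supplies the required surjectivity, and Lemma~\ref{preplemma} completes the inductive step of~(a). Part~(b) is handled dually: one replaces the short exact sequence $0 \to K \to \Delta(\nu) \to \rDelta(\nu) \to 0$ by $0 \to \rnabla(\nu) \to \nabla(\nu) \to C \to 0$, analyzes the tensor product $\rDelta(\mu)\otimes\Delta(\sigma)^{[1]}$ (whose sections are $\rDelta$'s at $p$-regular weights, so that a shift lies in $\sE^L$), and invokes Lemma~\ref{ERlemma}(2) in place of (1). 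The main obstacle will be bookkeeping the spectral sequence collapse carefully at each step---this is where the inductive hypothesis is essentially used, in conjunction with Theorem~\ref{nextmainresult}---and verifying the $\rnabla$-filtration of $W$ (respectively $\rDelta$-filtration of $\rDelta(\mu)\otimes\Delta(\sigma)^{[1]}$) in enough detail to invoke Lemma~\ref{ERlemma}.
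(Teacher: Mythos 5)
Your proposal is correct and follows essentially the same route as the paper's proof: induction on $m$, the Lyndon--Hochschild--Serre spectral sequence for $G_1 \triangleleft G$ combined with $\nabla$-acyclicity of $\Hom_G(\Delta(\sigma),-)$ to collapse to the edge map, reduction to the surjectivity of $\Ext^m_G(\rDelta(\nu),W)\to\Ext^m_G(\Delta(\nu),W)$ with $W=\rnabla(\mu)\otimes\nabla(\sigma^\star)^{[1]}$, verification that $W$ has a $\rnabla$-filtration so an appropriate shift lies in $\sE^R$, Lemma~\ref{ERlemma}, and finally Lemma~\ref{preplemma}. The paper spells out the $m=0$ base case separately and proves (a) before dualizing to (b), while you run the two in parallel, but these are organizational rather than mathematical differences.
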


\begin{proof}We will only prove part (a), leaving the dual assertion (b) to
the reader. We proceed by induction on $m$.

First, consider the $m=0$ case. By \cite[Prop. 2.3(b)]{PS11}, the $G_1$-head of $\Delta(\nu)$ is isomorphic to $\rDelta(\nu)\cong \Delta^p(\nu)$. Because $\rnabla(\mu)|_{G_1}$ is completely
reducible, it follows the map
$\Hom_{G_1}(\rDelta(\nu),\rnabla(\mu))\to\Hom_{G_1}(\Delta(\nu),\rnabla(\mu))$ is trivially an
isomorphism (and so, in particular, a surjection). Write $\nu=\nu_0+p\nu_1$ and $\mu=\mu_0+p\mu_1$ as
usual. If $\nu_0\not=\mu_0$, then $0=\Hom_{G_1}(\rDelta(\nu),\rnabla(\mu))^{[-1]}=\Hom_{G_1}(\Delta(\nu),\rnabla(\mu))^{[-1]}$, which has
a $\nabla$-filtration. Thus, suppose that $\nu_0=\mu_0$, so that the rational $G$-module
$$M^0:=\Hom_{G_1}(\rDelta(\nu),\rnabla(\mu))^{[-1]}\cong\Hom_k(\Delta(\nu_1),\nabla(\mu_1))\cong \nabla(\nu^\star_1)\otimes \nabla(\mu_1)$$
is isomorphic to a tensor product of two costandard modules; thus, it has a $\nabla$-filtration.
Therefore,
 $N_0:=\Hom_{G_1}(\Delta(\nu),\rnabla(\mu))^{[-1]}\cong M^0$ has a $\nabla$-filtration. This
completes the proof in the $m=0$ case.

Next, assume that assertion (a) is valid for
smaller values of some fixed integer $m>0$.
Let $\lambda\in X(T)_+$. Consider two Hochschild-Serre spectral sequences
$$\begin{aligned} & E^{a,b}_2=\Ext^a_{G/G_1}(\Delta(\lambda)^{[1]},\Ext^b_{G_1}(\Delta(\nu),\rnabla(\mu))
\Rightarrow \Ext_G^{a+b}(\Delta(\lambda^{[1]})\otimes\Delta(\nu),\rnabla(\mu)),\\
& {^\prime E}_2^{a,b}=\Ext^a_{G/G_1}(\Delta(\lambda)^{[1]},\Ext^b_{G_1}(\rDelta(\nu),\rnabla(\mu))
\Rightarrow \Ext_G^{a+b}(\Delta(\lambda)^{[1]}\otimes\rDelta(\nu),\rnabla(\mu)).\end{aligned}$$
For $a>0$ and $0\leq b<m$,
$$\Ext^a_{G/G_1}(\Delta(\lambda)^{[1]},\Ext^b_{G_1}(\Delta(\nu),\rnabla(\mu))\cong\Ext^a_G(\Delta(\lambda), \Ext^b_{G_1}(\Delta(\nu),\rnabla(\mu))^{[-1]})=0,$$
since, by induction, $\Ext^b_{G_1}(\Delta(\nu),\rnabla(\mu))^{[-1]}$ has a $\nabla$-filtration. In other words,
$E_2^{a,b}=0$ for $a>0$ and $0\leq b<m$, so that the edge map
$$\begin{aligned} E_\infty^m\!=\!\Ext^m_G(\Delta(\lambda)^{[1]}\otimes\!\Delta(\nu),& \rnabla(\mu))\\&\overset\sim\longrightarrow E_2^{m,0}=\Hom_{G/G_1}(\Delta(\lambda)^{[1]},
\Ext_{G_1}^m(\Delta(\nu),\rnabla(\mu))\end{aligned}$$
is an isomorphism. For the same reason, but now using Theorem \ref{nextmainresult}(a), the edge map
$$\begin{aligned}{^\prime E}_\infty^m=\Ext^m_G(\Delta(\lambda)^{[1]}\otimes &\rDelta(\nu),\rnabla(\mu))\\       &\overset\sim\longrightarrow{^\prime E}_2^{m,0} =\Hom_{G/G_1}(\Delta(\lambda)^{[1]},
\Ext_{G_1}^m(\rDelta(\nu),\rnabla(\mu))\end{aligned}$$
is also an isomorphism.

The natural surjection $\Delta(\nu)\twoheadrightarrow\rDelta(\nu)$ induces a map $^\prime E_\bullet\to E_\bullet$ of spectral sequences.   This gives a commutative diagram
\[\begin{CD} \Ext^m_G(\Delta(\nu),\rnabla(\mu)\otimes\nabla(\lambda^\star)^{[1]}) @>^\alpha>>
\Hom_{G}(\Delta(\lambda),\Ext^m_{G_1}(\Delta(\nu),\rnabla(\mu))^{[-1]})
\\
@A^\delta AA   @A^\beta AA \\
\Ext^m_G(\rDelta(\nu),\rnabla(\mu)\otimes\nabla(\lambda^\star)^{[1]}) @>^\epsilon>>
 \Hom_G(\Delta(\lambda),\Ext^m_{G_1}(\rDelta(\nu),\rnabla(\mu))^{[-1]})\end{CD}\]
in which the maps $\alpha$ and $\epsilon$ are isomorphisms (and are induced from the above
edge maps, after identifying $G/G_1$ with $G$ and untwisting the appropriate modules).

By Theorem \ref{nextmainresult}(a), $M^m:=\Ext^m_{G_1}(\rDelta(\nu),\rnabla(\mu))^{[-1]}$ has a $\nabla$-filtration. Let
$\Lambda$ be a large poset ideal in $X(T)_+$ containing all the dominant weights $\gamma$ such
that $L(\gamma)$ appears as a composition factor of the $G$-modules $M^m$ and $N^m:= \Ext^m_{G_1}(\Delta(\nu), \rnabla(\mu))^{[-1]}$, and let $B:=A_\Lambda$. Then Lemma \ref{preplemma} will imply that $N^m$ has
a $\nabla$-filtration and that (\ref{Jantzentheorem}) is surjective, provided that $\delta$ is surjective.

Equivalently, it suffices to show that the map $\beta$ is surjective.
   First, observe that $$\rnabla(\mu)\otimes
\nabla(\lambda^\star)^{[1]}\cong L(\mu_0)\otimes(\nabla(\mu_1)\otimes\nabla(\lambda^\star))^{[1]}.$$
Also, $\nabla(\mu_1)\otimes\nabla(\lambda^\star)$ has a $\nabla$-filtration in which the sections
$\nabla(\tau)$ satisfy $\tau\leq \mu_1+\lambda^\star$. Therefore, $\rnabla(\mu)\otimes\nabla(\lambda^*)^{[1]}$
has a $\rnabla$-filtration with sections $\rnabla(\xi)$ in which $\mu+p\lambda^\star-\xi\in p{\mathbb Z}R$.
In particular, $\xi$ is $p$-regular and has the same parity as $\mu+p\lambda^\star$, i.~e., $l(\xi)\equiv l(\mu+p\lambda^*)$
mod$\,\,2$.  Since $\rnabla(\xi)[-l(\xi)]\in \sE^R$ by \cite[Thm. 6.8]{CPS7},  $\rnabla(\mu)\otimes\nabla(\lambda^\star)^{[1]}$ belongs to $\sE^R$ or to $\sE^R[1]$ (depending on whether this parity is even or odd). Now Lemma \ref{ERlemma} implies that $\beta$ is a
surjection.   \end{proof}

Next, recall from (\ref{grading}) that $\fa$ has a positive grading induced from a grading on $\wfa$, as long
as $p>h$.  We show, in part (a) of the theorem below,  that the standard modules $\Delta(\nu)$, $\nu\in X_{\text{\rm reg}}(T)_+$, have a natural
$\fa$-grading, and as graded modules they satisfy
$\Delta(\nu)\cong\wgr\Delta(\nu)$.  This part of the theorem does not use the assumption (\ref{LCF}) that
the Lusztig character formula holds. However, if (\ref{LCF}) is assumed to hold, then we show also that  $\Delta(\nu)$ is linear over the Koszul algebra $\fa$.

\begin{thm}\label{DeltaKoszul} Assume that $p\geq 2h-2$ is odd.

(a) For $\lambda\in X_{\text{\rm reg}}(T)_+$, the standard module $\Delta(\lambda)$ has a graded $\fa$-module
structure,
 isomorphic to $\wgr\Delta(\lambda)$
over $\wgr\fa\cong\fa$.

(b) Assume that (\ref{LCF}) holds. With the graded structure given in (a), $\Delta(\lambda)$ is linear over $\fa$.
\end{thm}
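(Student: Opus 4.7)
The proof splits along the two parts, following the indication in the footnote that part (a) depends only on \cite{PS10}, while part (b) is derived from Theorem \ref{maintheorem} applied to $\rDelta(\lambda)$.

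For part (a), the plan is to transfer a graded structure from the quantum side via the $\sO$-integral machinery of \S2.5. By \cite[\S8]{PS10}, the $\sO$-order $\wfa$ carries a positive grading $\wfa=\bigoplus_{i\geq 0}\wfa_i$ lifting the Koszul grading of $u'_\zeta=\wfa_K$, with $\wfa\cong\wgr\wfa$ and $\fa\cong\wgr\fa$. The Weyl lattice $\wDelta(\lambda)$ is $\wfa$-tight by \cite[Cor.~3.9]{PS11} combined with the tightness discussion at the end of \S2.5. Tightness then forces $\wgr\wDelta(\lambda)\cong\gr\wDelta(\lambda)$ as graded lattices for $\wgr\wfa\cong\wfa$, and reducing modulo $\pi$ produces a graded $\fa$-module structure on $\Delta(\lambda)=\wDelta(\lambda)_k$ isomorphic to $\wgr\Delta(\lambda)$. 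The essential input guaranteeing that the integral grading actually exists on $\wDelta(\lambda)$ and agrees with the natural one on $\Delta_\zeta(\lambda)|_{u'_\zeta}$ is the quantum analogue of \cite[Thm.~8.7]{PS9} advertised in the introduction, which supplies the $u'_\zeta$-grading on $\Delta_\zeta(\lambda)$ that the $\sO$-form then descends and $\pi$-reduction transports to $\fa$.

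For part (b), assume (\ref{LCF}) and apply Theorem \ref{maintheorem}(a) to $M=\rDelta(\lambda)$, regarded as a graded $\wgr A$-module concentrated in grade $0$. The hypotheses hold: $M$ trivially has a $\rDelta$-filtration in each grade, and under LCF the restriction $\rDelta(\lambda)|_\fa\cong L(\lambda_0)^{\oplus\dim\Delta(\lambda_1)}$ is semisimple, hence linear over the Koszul algebra $\fa$. Theorem \ref{maintheorem} then produces an $\fa$-linear $(\Gamma,\fa)$-projective resolution of $\rDelta(\lambda)$ whose syzygies have $\rDelta$-filtrations grade by grade, and Proposition \ref{acyclic}(c)-type arguments turn this into the graded-Ext vanishing $\grExt^n_{\wgr A}(\rDelta(\lambda),\rnabla(\mu)\langle r\rangle)=0$ for $n\neq r$. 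Combining this with the $\rDelta$-filtration of $\Delta(\lambda)$ from \cite{PS11}, whose sections are placed by the graded structure of part (a) in specific grades of $\wgr\Delta(\lambda)$, one assembles via horseshoe-style induction along the filtration an $\fa$-linear resolution of $\Delta(\lambda)$ out of the linear resolutions of its $\rDelta$-sections, yielding the linearity claim.

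The hard part is the assembly in (b): one must check that the extensions realizing the $\rDelta$-filtration of $\Delta(\lambda)$ respect the grading of part (a), so that stitching linear resolutions of $\rDelta$-sections across the filtration produces a genuinely linear resolution of $\Delta(\lambda)$ without mismatch at the extension boundaries. This compatibility rests on $\wfa$-tightness and on the LCF, which via $\rDelta(\lambda_0)\cong L(\lambda_0)$ controls the $G_1$-structure and aligns the modular Koszul grading with its quantum counterpart. The analogous subtle point in (a) is matching the abstract Koszul grading on $u'_\zeta$ with the radical-filtration grading over $\fa$; tightness is what forces the match, and it is precisely here that the bookkeeping of \cite[\S3]{PS11} and \S2.5 does the essential work.
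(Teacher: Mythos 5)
Your part~(a) has the right ingredients but the logical order is reversed. Tightness of $\wDelta(\lambda)$ over $\wfa$ only tells you that $\wgr\wDelta(\lambda)\cong\gr\wDelta(\lambda)$; it does \emph{not} by itself produce an actual grading on the lattice $\wDelta(\lambda)$ nor show that $\wDelta(\lambda)$ is isomorphic (as an ungraded $\wfa$-module) to $\wgr\wDelta(\lambda)$. The paper constructs the grading first, by quoting \cite[Thms.\ 5.3 \& 6.3]{PS10}: the surjection $\phi:P_K(\lambda)\to\Delta_K(\lambda)$ carries an $\wfa$-graded lattice $\wP(\lambda)^\dagger$ onto $\wDelta(\lambda)$ with $\phi(\wP(\lambda)^\dagger)=\bigoplus_{i\geq0}\wfa_i\phi(\wP(\lambda)^\dagger_0)$, and \emph{this} is what makes $\wDelta(\lambda)$ a graded $\wfa$-lattice. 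Only afterwards is tightness used, to identify that grading with the forced one and obtain $\Delta(\lambda)=\bigoplus_i\fa_i\Delta(\lambda)\cong\wgr\Delta(\lambda)$. You do wave at the right source (the quantum-level grading and \S 8 of \cite{PS10}), so this is a presentation issue rather than a missing idea; but as written the tightness step is being asked to do work it cannot do.

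Part~(b) is a genuine gap. Your plan --- apply Theorem~\ref{maintheorem}(a) to $\rDelta(\lambda)$ and then assemble linear resolutions of the $\rDelta$-sections of $\wgr\Delta(\lambda)$ by a horseshoe induction --- does not close. By \cite[Thm.\ 5.1]{PS11} the $\rDelta$-filtration of $\wgr\Delta(\lambda)$ is grade by grade, so its sections are $\rDelta(\gamma)\langle s\rangle$ for \emph{varying} $s$; a section concentrated in grade $s$ is linear of degree $s$, and its contribution to $\grExt^n_\fa(-,L\langle r\rangle)$ sits at $n=r-s$, not $n=r$. Splicing those resolutions therefore produces nonvanishing graded Ext in the wrong homological degrees unless one can argue a cancellation across the extension boundaries, and that cancellation is exactly the content you left unverified under ``the hard part.'' Indeed, without an extra input the assembly would even seem to contradict what you are trying to prove. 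The paper sidesteps all of this: it reduces linearity of $\Delta(\lambda)$ over the Koszul algebra $\fa$ to the implication $\grExt^n_\fa(\Delta(\lambda),\rnabla(\mu)\langle r\rangle)\neq0\Rightarrow n=r$, and then uses the already-established surjection (\ref{Jantzentheorem}) from Theorem~\ref{Jantzen}(a) to get a surjection
$\grExt^n_\fa(\rDelta(\lambda),\rnabla(\mu)\langle r\rangle)\twoheadrightarrow\grExt^n_\fa(\Delta(\lambda),\rnabla(\mu)\langle r\rangle)$; the left side vanishes unless $n=r$ because $\rDelta(\lambda)|_\fa$ is completely reducible, hence linear. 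That is the step you need and do not supply. (The dependence on Theorem~\ref{maintheorem} that the footnote mentions is indirect, running through Theorem~\ref{nextmainresult} and Theorem~\ref{Jantzen}; it is not a direct application to $\rDelta(\lambda)$ followed by a filtration-splicing.)
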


\begin{proof} We first prove (a). In fact, we will prove a stronger statement, namely, that the grading
on $\Delta(\lambda)$ comes (via base change) from an $\wfa$-grading on $\wDelta(\lambda)$. (The proof makes heavy, though
implicit, use of a main result in \cite[Thm. 6.4]{PS9} which establishes
that, at the quantum enveloping algebra level,  $\Delta_K(\lambda)$ has a $\wfa_K$-grading.)

First, \cite[Thm. 6.3]{PS10} verifies the hypotheses of \cite[Thm. 5.3(ii)]{PS10}
in our context (ignoring the case $p=h=2$).\footnote{We take the opportunity to correct here several typos/omissions in
\cite{PS10}. p. 257, l. 17 down: replace this line by ``$\wR_\lambda\bigoplus(\wrad^{j_\lambda+1}\wN)_\lambda.$"
p. 266, l. 1 up: $\wP(\lambda)^\dagger=\wP(\lambda)_0^\dagger +\sum_{i\geq 1}\wfa_i\wP(\lambda)^\dagger=\wP(\lambda)^\dagger_0+\sum_{i\geq 1}\wfa_i\wP(\lambda)_0^\dagger+\sum_{i\geq 2}\wfa_i\wP(\lambda)^\dagger.$
p. 271, l. 14 down: Insert the sentence: ``Note that
$\wP^(\lambda)^\dagger$ inherits the structure of a $\wfa$-graded module from $\wQ(\lambda_0)$." before
the expression ``In general"
p. 271, l. 24 down: $K\wP(\lambda)^\dagger_0\subseteq \wA_{K,0}v$. p. 271, l. 26 down: ... as an $\wA_{K,0}$-module ...}  The module $P_K(\lambda)$ in \cite[Thm. 5.3]{PS10} is $\wP^\sharp(\lambda)_K$ in this
paper (see \S2.2). The verification in \cite[Thm. 6.3]{PS10} produces a lattice $\wP(\lambda)^\dagger
$ in $\wP(\lambda)_K$ with certain properties, including an $\wfa$-grading.  (In fact, $\wP(\lambda)^\dagger=
\wP^\sharp(\lambda_0)\otimes\wDelta(\lambda)^{[1]}$, where $\lambda=\lambda_0+p\lambda_1$ with
$\lambda_0\in X_1(T)$ and $\lambda_1\in X(T)_+$. The grading of $\wP(\lambda)^\dagger$ is inherited
from that
of $\wP^\sharp(\lambda_0)$.) The surjective map $\phi:P_K(\lambda)\to\Delta_K(\lambda)$ appearing
in the proof of \cite[Thm. 5.3]{PS10} is shown to satisfy:

\begin{itemize} \item[(i)] $\phi(\wP(\lambda)^\dagger)\cong \wDelta(\lambda)$---see the last line of the proof;

\item[(ii)] $\phi(\wP(\lambda)^\dagger)=\bigoplus_{i\geq 0}\wfa_i\phi(\wP(\lambda)^\dagger_0)$---see
the second and third displays on \cite[p. 269]{PS10}.
\end{itemize}
Thus, $\wDelta(\lambda)$ is a $\wfa$-graded module. On the other hand, $\wDelta(\lambda)$ is shown
in \cite[Thms. 5.3 \& 6.3]{PS10} to be $\wfa$-tight; see also \cite[Cor. 3.9]{PS10}. Hence, $\Delta(\lambda)=\bigoplus_{i\geq 0} \fa_i\Delta(\lambda)\cong \wgr\Delta(\lambda)$ as graded $\fa$-modules.

Next, we prove (b).  It suffices to prove that if $\grExt^n_\fa(\Delta(\lambda),\rnabla(\mu)\langle r\rangle )\not=0$, then $n=r$. However, the
surjection
(\ref{Jantzentheorem})
induces a surjection
$$\grExt^n_{\fa}(\rDelta(\lambda),\rnabla(\mu)\langle r\rangle )\twoheadrightarrow \grExt^n_\fa(\Delta(\lambda),
\rnabla(\mu)\langle r\rangle ).$$
 Thus, $\grExt^n_\fa(\rDelta(\lambda),\rnabla(\mu)\langle r\rangle )\not=0$, so $r=n$.
\end{proof}

\begin{rem} We emphasize again that Theorem \ref{DeltaKoszul}(a) does {\it not} require that the Lusztig
modular conjecture (equivalent to (\ref{LCF})) hold. Also, under the hypothesis of (a), it is proved in
 \cite[Cor. 3.2]{PS11} that $\wgr\Delta(\lambda)_0\cong\rDelta(\lambda)$
as a rational $G$-module. In \cite[Thm. 5.1]{PS11}, it proved under the hypothesis of (b) that $\wgr\Delta(\lambda)$ has a $\rDelta$-filtration, section by section.
\end{rem}

 Suppose that $\Gamma$ is a finite non-empty ideal of regular weights and let
$A=A_{\Gamma}$. For $\lambda\in\Gamma$, $\Delta(\lambda)\cong\widetilde
{\text{\textrm{gr}}}\,\Delta(\lambda)$ as $\fa\cong\widetilde
{\text{\textrm{gr}}}\,{\mathfrak{a}}$-modules by Theorem \ref{DeltaKoszul}(a).
On the other hand, $\wgr\Delta(\lambda)$ is a graded
$\wgr A$-module.  It follows easily that, for each
nonegative integer i, the ${\mathfrak{a}}$-submodule $\Delta(\lambda)_{\geq i}$ is
$A$-stable. In the sense of Definition \ref{hybrid} below and its discussion,
$\Delta(\lambda),$ together with its ${\mathfrak{a}}$-grading, has the structure
of an admissible hybrid $A$-module. Each admissible hybrid $A-$module $N$ has
an associated graded $\wgr A$-module

\[ \wGr\,N =\bigoplus_{j\in\mathbb{Z}}N_{\geq
j}/N_{\geq j+1}
\]
as defined  below Definition \ref{hybrid}. It is important for our discussion
to note that $N$ and\ $\wGr\,N$ have obviously
isomorphic restrictions to $\fa$-grmod, and that
$A
/\fa_{\geq1}A=(\wgr A)_{0}
\cong(\wgr A)/\fa_{\geq1}(\wgr A)$\ acts isomorphically on $N/\fa_{\geq
1}N\cong(\wGr\,N)/{\mathfrak{a}}_{\geq1}
(\wGr\,N)$. \ This latter isomorphism is a natural
transformation of functors on the admissible hybrid $A$-module category. Next observe
Corollary \ref{corlast1} can be applied after enlarging the poset $\Gamma$, using
$\Delta(\lambda)$ as the module $N$ there. Then $N$ can be replaced by the
admissible hybrid module $E$ obtained in that result$.$ Once again, the weight
poset can be enlarged and the process repeated. This process results in a
resolution $R_{\bullet}\twoheadrightarrow \Delta(\lambda)$ by modules which are all projective
over (various) quasi-hereditary algebras $A_{\Lambda\text{ }}$with
$\Gamma\subseteq\Lambda.$and $(A_{\Lambda})_{\Gamma}=A_{\Gamma}$. All the
differentials are maps of admissible hybrid $A_{\Lambda}$-modules for one of these posets
$\Lambda.$ In addition, $\wGr R_{\bullet}$ is
a graded resolution of $\wGr\Delta(\lambda)$ by
modules projective over\ the (various) associated quasi-hereditary graded
algebras $\wgr A_{\Lambda}$ with $(\wgr A_{\Lambda})_{\Gamma}=\wgr
A_{\Gamma}$. Consequently, for any $\mu\in\Gamma$, the resolution
$R_{\bullet}|_{A_{\Gamma}}$ is by objects which are acyclic for the functor
Hom$_{A_{\Gamma}}(-,$ $\rnabla(\mu))$. Similarly,
$(\wGr\,R_\bullet)|_{\wgr A_\Gamma}$
 is a resolution by objects
acyclic for the functor $\Hom_{\wgr A_{\Gamma}}(-,
\wnabla(\mu))$. Finally, using the isomorphisms
$R_{\bullet}/{\fa_{\geq1}}R_{\bullet}\cong(\wGr\,R_{\bullet})/\fa_{\geq1}(\wGr\,R_{\bullet})$, it follows that the respective
application of each of the two Hom  functors to these respective resolutions
by acyclic objects results, after making natural identifications, in exactly
the same complex!  This gives the first half of the following important
result. The proof of the second half, dual to the first, is left to the
reader. The conclusions, of course, should be compared with Theorem
\ref{nextmainresult}(b). Observe that the LCF assumption (\ref{LCF}) is {\it not}
required in the proof.

\begin{thm}
\label{GExt} Assume that $p\geq2h-2$ is odd. Let
$\lambda,\mu\in X_{\text{\textrm{reg}}}(T)_+$.  Let $A=A_{\Gamma}$, for any
finite ideal $\Gamma$ of $p$-regular dominant weights containing $\lambda,\mu
$. For any integer $n\geq0$, there are natural vector space isomorphisms
\begin{equation}
\label{equiv2}\begin{aligned} {\text{\rm Ext}}^n_{\wgr A}(\wgr\Delta(\lambda),\rnabla
(\mu))&\cong\Ext^n_A(\Delta(\lambda),\rnabla(\mu))\\ &\cong\Ext^n_G(\Delta(\lambda),\rnabla(\mu))\end{aligned}
\end{equation}
and
\begin{equation}
\label{equiv3}\begin{aligned} \Ext^n_{\wgr A}(\rDelta(\lambda),\widetilde{\text{\rm gr}}\,^\diamond\nabla(\mu))&\cong{\text{\rm Ext}}^n_A(\rDelta(\lambda),\nabla(\mu))\\ &\cong\Ext^n_G(\rDelta(\lambda),\nabla(\mu)).\end{aligned}
\end{equation}
\end{thm}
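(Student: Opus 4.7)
The plan is to establish (\ref{equiv2}) by constructing parallel resolutions of $\Delta(\lambda)$ and $\wgr\Delta(\lambda)$ whose images under the relevant Hom functors agree literally. By Theorem \ref{DeltaKoszul}(a)---crucially, without invoking (\ref{LCF})---$\Delta(\lambda)$ carries a natural positive $\fa$-grading under which $\Delta(\lambda)\cong\wgr\Delta(\lambda)$ as graded $\fa$-modules. Because the $A$-submodule structure on $\Delta(\lambda)$ is compatible with this grading via the algebra map $\fa\to A$, the module $\Delta(\lambda)$ acquires the structure of an admissible hybrid $A$-module in the sense of Definition \ref{hybrid}, and its associated graded object is precisely the $\wgr A$-module $\wgr\Delta(\lambda)$.

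Next, I would apply Corollary \ref{corlast1} recursively, at each step enlarging $\Gamma$ to a larger finite ideal $\Lambda$ of $p$-regular dominant weights, to produce a resolution $R_\bullet\twoheadrightarrow\Delta(\lambda)$ whose terms are admissible hybrid $A_\Lambda$-modules, projective over the appropriate $A_\Lambda$, and whose differentials are morphisms of admissible hybrid modules. Passing to associated gradeds yields a parallel graded resolution $\wgr R_\bullet\twoheadrightarrow\wgr\Delta(\lambda)$ by modules projective over the corresponding quasi-hereditary algebras $\wgr A_\Lambda$. Both complexes restrict to the same complex in $\fa$-grmod, and by Proposition \ref{acyclic} each $R_i$ (resp.\ each $\wgr R_i$) is acyclic for the functor $\Hom_{A_\Gamma}(-,\rnabla(\mu))$ (resp.\ $\Hom_{\wgr A_\Gamma}(-,\rnabla(\mu))$), so these resolutions compute the respective $\Ext^\bullet$-groups.

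The conceptual heart of the argument is the observation that $\rnabla(\mu)$ is killed by $\fa_{\geq 1}$, so each Hom functor factors through the quotient by $\fa_{\geq 1}$. Under the natural identification $A/\fa_{\geq 1}A\cong (\wgr A)_0\cong \wgr A/\fa_{\geq 1}\wgr A$, the complexes $R_\bullet/\fa_{\geq 1}R_\bullet$ and $\wgr R_\bullet/\fa_{\geq 1}\wgr R_\bullet$ are canonically isomorphic, so the two Hom complexes coincide on the nose. This yields the first isomorphism of (\ref{equiv2}); the second, identifying $\Ext^\bullet_A$ with $\Ext^\bullet_G$, is standard because $A$-mod fully embeds in $\Gmod$ preserving $\Ext^\bullet$ (see \S2.3).

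The isomorphisms in (\ref{equiv3}) will follow from the dual construction: build an injective co-resolution $\nabla(\mu)\hookrightarrow I^\bullet$ by admissible hybrid modules whose dual associated-graded construction produces $\wgr^\diamond\nabla(\mu)$, and observe that $\rDelta(\lambda)$ is a grade-$0$ $(\wgr A)_0$-module, so $\Hom(\rDelta(\lambda),-)$ factors through reduction mod $\fa_{\geq 1}$, giving the same quotient complex on both sides. The main obstacle will be verifying that the hybrid-module machinery delivers resolutions that are simultaneously acyclic for both Hom functors and carry compatible associated-graded structures grade by grade; this is exactly the role of Corollary \ref{corlast1} in conjunction with the tightness and $\fa$-projectivity properties established in \S2.
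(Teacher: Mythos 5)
Your overall approach matches the paper's: Theorem \ref{DeltaKoszul}(a) makes $\Delta(\lambda)$ an admissible hybrid $A$-module with $\wGr \Delta(\lambda) \cong \wgr\Delta(\lambda)$, Corollary \ref{corlast1} is iterated (enlarging the poset at each step) to build parallel resolutions $R_\bullet$ and $\wGr R_\bullet$, and the two $\Hom$-complexes agree because $\Hom(-,\rnabla(\mu))$ factors through the common quotient $A/\fa_{\geq 1}A \cong (\wgr A)_0$. This is exactly the argument sketched in the paper immediately before the statement of Theorem \ref{GExt}.

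However, your invocation of Proposition \ref{acyclic} to establish acyclicity of the $R_i$ and $\wGr R_i$ is a misstep that should be corrected. Proposition \ref{acyclic} assumes the LCF condition (\ref{LCF}), whereas Theorem \ref{GExt} is explicitly one of the results in the paper that does \emph{not} require (\ref{LCF}); citing it would silently introduce a hypothesis you are supposed to avoid. Moreover, Proposition \ref{acyclic} concerns the $(\Gamma,\fa)$-projective resolutions $\Xi_\bullet$ of Theorem \ref{maintheorem}, not the hybrid resolutions $R_\bullet$ coming from Corollary \ref{corlast1}. The correct (and much more elementary) reason for acyclicity is simply that each $R_i$ is \emph{projective} over some $A_\Lambda$ with $\Gamma\subseteq\Lambda$, while $\rnabla(\mu)$ has composition factors supported in $\Gamma$; the $\Ext$-preserving full embeddings $A_\Gamma\text{--mod}\hookrightarrow A_\Lambda\text{--mod}$ of \S2.3 then give $\Ext^n_G(R_i,\rnabla(\mu)) = \Ext^n_{A_\Lambda}(R_i,\rnabla(\mu)) = 0$ for $n>0$, and likewise on the graded side using the projectivity of $\wGr R_i$ over $\wgr A_\Lambda$. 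With that correction, the argument is sound. One small further caveat on (\ref{equiv3}): since $\rDelta(\lambda)$ is annihilated by $\fa_{\geq 1}$ acting on the \emph{left}, $\Hom_B(\rDelta(\lambda), I)$ factors through the $\fa_{\geq 1}$-\emph{annihilated submodule} (flat socle) of $I$, not through a quotient of $I$; the dual argument should be phrased in those terms.
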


Now we are ready to complete the proof of several results from \S3.

\medskip\noindent{\underline{Proof of Theorem \ref{standardtheorem}:} We use the notation
of Theorem \ref{standardtheorem}. By Theorem \ref{QKoszultheorem}, $B=\wgr A$ is a Q-Koszul algebra
with weight $\Lambda$. As discussed in \S2.3, the graded algebra $B$ is also quasi-hereditary with weight poset $\Lambda$ and with
standard (resp., costandard) modules as indicated (in the statement of Theorem \ref{standardtheorem}). In
particular, condition (i) in Definition \ref{standard} holds.

 It therefore remains to check condition (ii) in Definition \ref{standard},
which is really two conditions. 
We will prove the first of these; the second follows by duality. Given $\lambda\in\Lambda$,
$\Delta^0(\lambda)=\rDelta(\lambda)$, again by Theorem \ref{QKoszultheorem}. Also, $\Delta^B(\lambda)=\wgr\Delta(\lambda)$, as noted above. In turn,
Theorem \ref{DeltaKoszul} implies (using both parts (a) and (b)) that $\Delta^B(\lambda)|_\fa$ is linear. Also, by the main result
\cite[Thm. 5.1]{PS11}, each section $\Delta^B(\lambda)_s=(\wgr \Delta(\lambda))_s$ has a $\rDelta$-filtration. 
Thus, $M:=\Delta^B(\lambda)$ satisfies the hypothesis of Theorem \ref{maintheorem}(a), using $\Gamma=\Lambda$. These
hypotheses appear again in Proposition \ref{acyclic} (which applies the construction of Theorem \ref{maintheorem}(a)). The vanishing in the conclusion of Proposition \ref{acyclic}(c) now gives the desired result.
\qed

 \medskip\noindent\underline{Proof of Corollary \ref{KLtheory}:} First, suppose that $\grExt^n_{\wgr A}(\wgr \Delta(\lambda),L(\mu)\langle r\rangle )\not=0$. Since $\Lambda\subset \Jan$, $L(\mu)=\rnabla(\mu)$, for all $\mu\in\Lambda$. Then by Theorem \ref{standardtheorem}, $n=r$. Also, Theorem \ref{GExt} implies that $\Ext^n_A(\Delta(\lambda),
 L(\mu))\not=0$. Therefore, using \cite{CPS1a}, we obtain that $l(\lambda)\equiv l(\mu)$ mod 2. It follows
 that $\wgr A$-mod has a graded Kazhdan-Lusztig theory (and so is Koszul). In particular, $\wgr \Delta(\lambda)$, $\lambda\in
 \Lambda$, is $\wgr A$-linear.  \qed

 \begin{rem} Theorem \ref{GExt} is really quite general, and would hold with $\Delta(\lambda)$ replaced by any other admissible hybrid $A$-module.  A dual statement holds for $\nabla(\mu)$.\end{rem}

\section{Calculations}

In this section,  assume that $p\geq 2h-2$ is odd, and that the Lusztig character formula holds (or, equivalently, that the isomorphisms (\ref{LCF}) hold). If $V$ is a (finite dimensional) rational $G$-module
having a $\nabla$-filtration $\mathscr F$, then the number of times  that a given module $\nabla(\gamma)$ appears as a
section in $\mathscr F$  depends only on $V$ (and not on $\mathscr F$); this multiplicity equals
$\dim\Hom_G(\Delta(\gamma),V)$. This well-known observation is immediate since
the functor $\Hom_G(\Delta(\gamma),-)$ is exact on the category of modules with a $\nabla$-filtration and
since $\dim\Hom_G(\Delta(\gamma),\nabla(\mu))=\delta_{\gamma,\mu}$. If $V$ has a $\nabla$-filtration, let $[V:\nabla(\gamma)]$ denote the multiplicity of $\nabla(\gamma)$ as a section of $V$ in a $\nabla$-filtration.

Recall that Theorem \ref{nextmainresult}(a) established that, if $\lambda,\mu\in X_{\text{\rm reg}}(T)_+$ and $n$ is a nonnegative integer, then the rational $G$-module $\Ext^n_{G_1}(\rDelta(\lambda),\rnabla(\mu))^{[-1]}$ has a $\nabla$-filtration.  Also, Theorem \ref{Jantzen} showed that both $\Ext^n_{G_1}(\Delta(\lambda),\rnabla(\mu))^{[-1]}$
and $\Ext^n_{G_1}(\rDelta(\mu),\nabla(\lambda))^{[-1]}$ have $\nabla$-filtrations.

This section describes
how the multiplicity of a $\nabla(\tau)$, $\tau\in X(T)_+$, in a $\nabla$-filtration of $\Ext^n_{G_1}(\Delta(\lambda),\rnabla(\mu))^{[-1]}$
 can be combinatorially determined in terms of the coefficients of certain Kazhdan-Lusztig polynomials $P_{x,y}$ for the
the affine Weyl group $W_p$ of $G$, plus a well-known multiplicity result of Steinberg. We regard $P_{x,y}$ as a polynomial in $t:=\sqrt{q}$---in fact, it is
a polynomial in $t^2$. Also, in the formulas below, $\overline{P}_{x,y}$ is obtained from $P_{x,y}$ by replacing $t$ by $t^{-1}$.

First, consider  $\Ext^n_{G_1}(\rDelta(\lambda),\rnabla(\mu))^{[1]}$.  Write $\lambda=\lambda_0+p\lambda_1$
and $\mu=\mu_0+p\mu_1$, where $\lambda_0,\mu_0\in X_1(T)$, $\lambda_1,\mu_1\in X(T)_+$.
Hence, $\rDelta(\lambda)\cong L(\lambda_0)\otimes\Delta(\lambda_1)^{[1]}$ and $\rnabla(\mu)\cong L(\mu_0)
\otimes\nabla(\mu_1)^{[1]}$. Thus,
$$\begin{aligned}\Ext^n_{G_1}(\rDelta(\lambda), \rnabla(\mu))^{[-1]} &\cong \Hom_k(\Delta(\lambda_1),\nabla(\mu_1))\otimes
\Ext^n_{G_1}(L(\lambda_0),L(\mu_0))\\ &\cong \nabla(\lambda_1^\star)\otimes\nabla(\mu_1)\otimes \Ext^n_{G_1}(L(\lambda_0),L(\mu_0)).\end{aligned}$$
It is well-known (and has been already used several times in this paper)
that the tensor product of modules of the form $\nabla(\tau)$, $\tau\in X(T)_+$, has a $\nabla$-filtration,
the terms of which can be determined by character-theoretic calculations, using Steinberg's theorem
\cite[24.2]{Hump}. Thus, it suffices to
determine the multiplicities of $\nabla$-sections in $\Ext^n_{G_1}(L(\lambda_0),L(\mu_0))^{[-1]}$.
Observe that  $L(\lambda_0)\cong\rDelta(\lambda_0)$ and $L(\mu_0)=\rnabla(\mu_0)$.

Thus, we can assume from the start that $\lambda=\lambda_0$ and $\mu=\mu_0$ are restricted dominant weights. Then,
if $\tau\in X(T)_+$, the multiplicity of $\nabla(\tau)$ as a section in a $\nabla$-filtration of
$\Ext^n_{G_1}(\rDelta(\lambda_0),\rnabla(\mu_0))^{[-1]}$ is
$$\begin{aligned}
 \left[(\Ext^n_{G_1}(\rDelta(\lambda_0),\rnabla(\mu_0))^{[-1]}:\nabla(\tau)\right]& =\dim\Hom_G(\Delta(\tau)^{[1]},
\Ext^n_{G_1}(\rDelta(\lambda_0),\rnabla(\mu_0))\\
&=\dim\Ext^n_{G_1}(\rDelta(\lambda_0)\otimes\Delta(\tau)^{[1]},\rnabla(\mu_0))^G\\
&=\dim{\left(\Ext^n_{G_1}(\rDelta(\lambda_0+p\tau),\rnabla(\mu_0))^{[-1]}\right)}^{G}\\
&=\dim\Ext^n_G(\rDelta(\lambda_0+p\tau),\rnabla(\mu_0)).\end{aligned}
$$
The last equality holds because the Hochschild-Serre spectral sequence (using $G_1$ as the normal
subgroup scheme) for computing
$\Ext^n_G(\rDelta(\lambda+p\tau),\rnabla(\mu_0))$ has $E_2^{a,b}$-term ($a+b=n$) given by
$$E_2^{a,b}=\opH^a(G,\Ext^b_{G_1}(\rDelta(\lambda_0+p\tau),\rnabla(\mu_0))^{[-1]}).$$
However, $E_2^{a,b}=0$ if $a>0$, since $\opH^a(G,V)=0$, for $a>0$ and any rational $G$-module $V$
having a $\nabla$-filtration.

Write $\lambda':=\lambda_0+p\tau=x\cdot\lambda^-$ and $\mu_0=y\cdot\mu^-$, where $\lambda^-,\mu^-$ belong to the $p$-alcove $C^-_p$ containing $-2\rho$, and $x,y$ are (uniquely determined) elements of $W_p$. We can assume that $\lambda^-=\mu^-$, otherwise all the $\Ext$ groups are 0 by the linkage principle. Then since $p\geq 2h-2$ is odd and since (\ref{LCF}) is assumed to hold, \cite[Thms. 5.4 \& 6.7]{CPS7} implies that

$$\begin{aligned}
\dim\Ext_{G}^n(\rDelta(\lambda'),& \rnabla(\mu_0))\\
& =\sum_{m=0}^n\sum_\nu \dim\Ext_{G}^m(\rDelta(\lambda'),\nabla(\nu))
\cdot\dim\Ext_{G}^{n-m}(\Delta(\nu),\rnabla(\mu_0)).\end{aligned}
$$
The dimensions of the $\Ext$-groups appearing in the sum are all coefficients of Kazhdan-Lusztig
polynomials, as shown in \cite[\S5]{CPS7}. More precisely,
for a given $\nu$, above Ext groups are 0, unless $\nu=z\cdot\lambda^-$, for some $z\in W_p$.
Then
\begin{equation}\begin{aligned}\label{KLcom} t^{l(x)-l(z)} \overline{P}_{z,x}
&=\sum_{n\geq 0}\dim\,\Ext^n_G(\rDelta(\lambda'),\nabla(z\cdot
\lambda^-))t^n\\
&=\sum_{n\geq 0}\dim\,\Ext^n_G(\Delta(z\cdot\lambda^-),\rnabla
(\lambda'))t^n.\end{aligned}
\end{equation}

Thus, the multiplicity of $\nabla(\tau)$ can be combinatorially calculated in terms of Kazhdan-Lusztig polynomial coefficients. We give the formula explicitly below, up to Steinberg's formula for multiplicities
in tensor products mentioned above, which calculates the multiplicities $\left[\nabla(\lambda^\star)\otimes\nabla(\mu_1)\otimes\nabla(\tau):\nabla(\omega)\right]$ in (\ref{formula}). Given $u,v\in W_p$ and $s\in {\mathbb Z}$,  $c(u,v,s)$ denotes the coefficient
of $t^s$ in $P_{u,v}$.
Thus,
\begin{equation}\label{BobSteinberg}
P_{u,v}=\sum_{s\geq 0}c(u,v,s)t^s.\end{equation}
For $p$-regular dominant
weights $\lambda,\mu$, write  $\lambda=x\cdot\lambda^-$ and $\mu=y\cdot\mu^-$, for unique $x,y\in W_p$,
and unique $\lambda^-,\mu^-\in C^-$. Using (\ref{BobSteinberg}), put
$$C(\lambda,\mu,n):=\begin{cases} 0, \quad {\text{\rm when}}\quad\lambda^-\not=\mu^-;\\
\sum_{z}\sum_{m=0}^n
c(z,x,l(x)-l(z)-m)\cdot c(z,y,l(y)-l(z)-n+m),\\ \quad {\text{\rm when}}\quad\lambda^-=\mu^-, \end{cases}$$
where $\sum_z$ is the sum over all $z\in W_p$ satisfying $z\cdot\lambda^-\in X(T)_+$.

Now we can state
\begin{thm}\label{calcthm}Let $\lambda,\mu\in X_{\text{\rm reg}}(T)$ and let $n$ be a nonnegative integer.
For any $\omega\in X(T)_+$,
\begin{equation}\label{formula}\begin{aligned}
\rule{0pt}{0pt}[ \Ext^n_{G_1}(\rDelta(\lambda),& \rnabla(\mu))^{[-1]}:\nabla(\omega) ] \\
&=\sum_{\tau\in X(T)_+}C(\lambda_0+p\tau,\mu_0,n)
 \left[\nabla(\lambda_1^\star)\otimes\nabla(\mu_1)\otimes\nabla(\tau):\nabla(\omega)\right].\end{aligned}\end{equation}

\end{thm}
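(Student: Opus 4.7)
The plan is to assemble the identifications already developed in the preceding paragraphs of this section, so no substantially new ideas are required; the work lies entirely in organizing the combinatorics so that it matches the definition of $C(\lambda_0+p\tau,\mu_0,n)$.

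First, I would invoke the tensor product decomposition
$$\Ext^n_{G_1}(\rDelta(\lambda),\rnabla(\mu))^{[-1]}\cong \nabla(\lambda_1^\star)\otimes \nabla(\mu_1)\otimes \Ext^n_{G_1}(L(\lambda_0),L(\mu_0))^{[-1]}$$
derived above. Since each factor $\nabla(\lambda_1^\star)$, $\nabla(\mu_1)$ and, by Theorem \ref{nextmainresult}(a), $\Ext^n_{G_1}(L(\lambda_0),L(\mu_0))^{[-1]}$ admits a $\nabla$-filtration, a routine argument using exactness of $\Hom_G(\Delta(\omega),-)$ on the category of $G$-modules with a $\nabla$-filtration gives the multiplicativity formula
$$[\Ext^n_{G_1}(\rDelta(\lambda),\rnabla(\mu))^{[-1]}:\nabla(\omega)]=\sum_{\tau\in X(T)_+}[\Ext^n_{G_1}(L(\lambda_0),L(\mu_0))^{[-1]}:\nabla(\tau)]\cdot[\nabla(\lambda_1^\star)\otimes\nabla(\mu_1)\otimes\nabla(\tau):\nabla(\omega)].$$
This reduces matters to computing the restricted case multiplicity $[\Ext^n_{G_1}(L(\lambda_0),L(\mu_0))^{[-1]}:\nabla(\tau)]$.

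Next, the preceding chain of equalities in this section identifies that multiplicity with $\dim\Ext^n_G(\rDelta(\lambda_0+p\tau),\rnabla(\mu_0))$, using that $\opH^a(G,-)$ vanishes on modules with $\nabla$-filtrations for $a>0$. Combining this with the convolution identity
$$\dim\Ext^n_G(\rDelta(\lambda'),\rnabla(\mu_0))=\sum_{m=0}^n\sum_\nu \dim\Ext^m_G(\rDelta(\lambda'),\nabla(\nu))\cdot\dim\Ext^{n-m}_G(\Delta(\nu),\rnabla(\mu_0))$$
from \cite[Thms.~5.4 \& 6.7]{CPS7} (valid since (\ref{LCF}) holds and $p\geq 2h-2$) reduces the problem to a sum over $\nu=z\cdot\lambda^-$ where $\lambda'=\lambda_0+p\tau=x\cdot\lambda^-$ and $\mu_0=y\cdot\mu^-$. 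By the linkage principle, only the terms with $\lambda^-=\mu^-$ can contribute, matching the case analysis in the definition of $C$.

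Finally, I would substitute (\ref{KLcom}): reading off the coefficient of $t^m$ (respectively $t^{n-m}$) in $t^{l(x)-l(z)}\overline{P}_{z,x}$ (resp.\ $t^{l(y)-l(z)}\overline{P}_{z,y}$) gives exactly $c(z,x,l(x)-l(z)-m)$ and $c(z,y,l(y)-l(z)-n+m)$, so the inner double sum collapses to $C(\lambda_0+p\tau,\mu_0,n)$. Substituting back into the multiplicativity formula yields (\ref{formula}). The only step requiring real care is this last bookkeeping of indices and length functions, to ensure that the bar-involution on the Kazhdan--Lusztig polynomials is handled correctly and that the summation over $z\in W_p$ is restricted to those with $z\cdot\lambda^-\in X(T)_+$; this is the main potential obstacle, but it is purely combinatorial.
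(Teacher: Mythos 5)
Your proposal follows the paper's own argument essentially step for step: the tensor decomposition $\rDelta(\lambda)\cong L(\lambda_0)\otimes\Delta(\lambda_1)^{[1]}$ (and its dual) to factor the $\Ext$-group, the reduction to the restricted case, the identification $[\Ext^n_{G_1}(\rDelta(\lambda_0),\rnabla(\mu_0))^{[-1]}:\nabla(\tau)]=\dim\Ext^n_G(\rDelta(\lambda_0+p\tau),\rnabla(\mu_0))$ via the collapsing Hochschild--Serre spectral sequence, and finally the CPS convolution formula together with (\ref{KLcom}) to produce $C(\lambda_0+p\tau,\mu_0,n)$. This is correct and matches the paper's proof.
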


For the case of $\Ext^n_{G_1}(\Delta(\lambda),\rnabla(\mu))$, the calculations are easier (but use Theorem \ref{Jantzen}) and are
left to the reader. Given $p$-regular weights  $\lambda=x\cdot\lambda^-$ and $\mu=y\cdot\mu^-$ as
above, define, for $n\in\mathbb Z$,
$$c(\lambda,\mu,n):=\begin{cases} 0, \quad{\text{\rm when}}\quad \lambda^-\not=\mu^-;\\
c(x,y,l(x)-l(y)-n), \quad{\text{\rm when}}\quad  \lambda^-=\mu^-.\end{cases}$$

\begin{thm}\label{calcthm2} Let $\lambda,\mu\in X_{\text{\rm reg}}(T)$ and let $n$ be a nonnegative integer.
For any $\omega\in X(T)_+$,
\begin{equation}\label{formla2}
\begin{aligned} \rule{0pt}{0pt}[\Ext^n_{G_1}(\Delta(\lambda),& \rnabla(\mu))^{[-1]}:\nabla(\omega)] \\
  &=\sum_{\tau\in X(T)_+} c(\lambda,\mu_0+p\tau^\star,n)[\nabla(\tau)\otimes\nabla(\mu_1):\nabla(\omega)].
  \end{aligned}  \end{equation}
  and
\begin{equation}\label{formla3}
\begin{aligned}\rule{0pt}{0pt}[\Ext^n_{G_1}(\rDelta(\lambda),\nabla(\mu))^{[-1]}: & \nabla(\omega)] \\
&=\sum_{\tau\in X(T)_+} c(\mu,\lambda_0+p\tau,n)[\nabla(\lambda_1)\otimes\nabla(\tau):\nabla(\omega)].
\end{aligned}\end{equation}
\end{thm}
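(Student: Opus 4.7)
The plan is to mirror the proof of Theorem~\ref{calcthm}, swapping the pair $(\rDelta,\rnabla)$ for the pairs $(\Delta,\rnabla)$ and $(\rDelta,\nabla)$ respectively. For (\ref{formla2}), I would first use the LCF identification $\rnabla(\mu)\cong L(\mu_0)\otimes\nabla(\mu_1)^{[1]}$ together with the $G_1$-triviality of the twisted factor to obtain a $G$-module isomorphism
\[
\Ext^n_{G_1}(\Delta(\lambda),\rnabla(\mu))^{[-1]}\;\cong\;\nabla(\mu_1)\otimes\Ext^n_{G_1}(\Delta(\lambda),L(\mu_0))^{[-1]}.
\]
The right-hand factor has a $\nabla$-filtration by Theorem~\ref{Jantzen}(a), and since tensor products of $\nabla$-filtered rational $G$-modules remain $\nabla$-filtered, the multiplicity $[-:\nabla(\omega)]$ expands as $\sum_\tau [\Ext^n_{G_1}(\Delta(\lambda),L(\mu_0))^{[-1]}:\nabla(\tau)]\cdot[\nabla(\tau)\otimes\nabla(\mu_1):\nabla(\omega)]$, producing the tensor-product factor in~(\ref{formla2}).

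Next, I would compute each summand multiplicity $[\Ext^n_{G_1}(\Delta(\lambda),L(\mu_0))^{[-1]}:\nabla(\tau)]$ as $\dim\Hom_G(\Delta(\tau),-)$ using the Hochschild--Serre spectral sequence for $G_1\lhd G$, exactly as in the proof of Theorem~\ref{Jantzen}. The $\nabla$-filtration input of Theorem~\ref{Jantzen}(a) forces $\opH^a(G,-)$ to vanish on the relevant $E_2$-page for $a>0$, so the spectral sequence collapses onto its edge and yields
\[
\dim\Hom_G(\Delta(\tau),\Ext^n_{G_1}(\Delta(\lambda),L(\mu_0))^{[-1]})\;=\;\dim\Ext^n_G(\Delta(\lambda)\otimes\Delta(\tau)^{[1]},L(\mu_0)).
\]
Tensor--Hom adjunction and the LCF identity $L(\mu_0)\otimes\nabla(\tau^\star)^{[1]}\cong\rnabla(\mu_0+p\tau^\star)$ (valid because $\mu_0$ is a $p$-regular restricted weight) rewrite the right-hand side as $\dim\Ext^n_G(\Delta(\lambda),\rnabla(\mu_0+p\tau^\star))$, which by~(\ref{KLcom}) and the definition of $c$ equals $c(\lambda,\mu_0+p\tau^\star,n)$; assembling yields~(\ref{formla2}).

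The proof of~(\ref{formla3}) is entirely parallel, with the roles of $\Delta$/$\rDelta$ and $\rnabla$/$\nabla$ interchanged. Here one uses the LCF decomposition $\rDelta(\lambda)\cong L(\lambda_0)\otimes\Delta(\lambda_1)^{[1]}$ to split off the $G_1$-trivial factor $\Delta(\lambda_1)^{[1]*}\cong\nabla(\lambda_1^\star)^{[1]}$ from the Ext, leaving the factor $\Ext^n_{G_1}(L(\lambda_0),\nabla(\mu))^{[-1]}$, which has a $\nabla$-filtration by Theorem~\ref{Jantzen}(b). The same Hochschild--Serre argument, combined with the LCF identity $L(\lambda_0)\otimes\Delta(\tau)^{[1]}\cong\rDelta(\lambda_0+p\tau)$, yields
\[
[\Ext^n_{G_1}(L(\lambda_0),\nabla(\mu))^{[-1]}:\nabla(\tau)]\;=\;\dim\Ext^n_G(\rDelta(\lambda_0+p\tau),\nabla(\mu))\;=\;c(\mu,\lambda_0+p\tau,n)
\]
by~(\ref{KLcom}). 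The principal technical obstacle is ensuring that the Hochschild--Serre spectral sequence degenerates onto its bottom edge; this is precisely what the $\nabla$-filtration conclusion of Theorem~\ref{Jantzen} is designed to provide, via vanishing of higher $\opH^a(G,-)$ on $\nabla$-filtered modules. Everything else---the Frobenius trivialization of twisted tensor factors across $G_1$, tensor--Hom adjunction, and the translation of~(\ref{KLcom}) into the $c$-notation---is formal and mirrors the proof of Theorem~\ref{calcthm}.
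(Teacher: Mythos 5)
Your proof is correct and follows precisely the approach the paper outlines: it says the calculations for $\Ext^n_{G_1}(\Delta(\lambda),\rnabla(\mu))$ (and dually) "are easier (but use Theorem \ref{Jantzen}) and are left to the reader," and you supply exactly that calculation, mirroring the derivation of Theorem \ref{calcthm}. The three key moves---splitting off the $G_1$-trivial Frobenius-twisted tensor factor, invoking Theorem \ref{Jantzen} to collapse the Hochschild--Serre spectral sequence onto its bottom edge, and then translating $\dim\Ext^n_G(\Delta(\lambda),\rnabla(\mu_0+p\tau^\star))$ (resp.\ $\dim\Ext^n_G(\rDelta(\lambda_0+p\tau),\nabla(\mu))$) into the $c$-notation via (\ref{KLcom})---are all sound and match the paper's intent.

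One point worth flagging: your derivation of (\ref{formla3}) produces the factor $\nabla(\lambda_1^\star)$ (from dualizing $\Delta(\lambda_1)^{[1]}$ in the Hom-adjunction), not $\nabla(\lambda_1)$ as written in the paper's statement of (\ref{formla3}). Your version is the correct one, consistent with Theorem \ref{calcthm}, whose formula (\ref{formula}) does carry $\nabla(\lambda_1^\star)$; the absence of the $\star$ in (\ref{formla3}) is a typo in the paper, and you should not silently match it. Apart from that editorial issue, your argument is complete: the applications of Theorem \ref{Jantzen}(a) and (b) to $\Ext^b_{G_1}(\Delta(\lambda),L(\mu_0))^{[-1]}$ and $\Ext^b_{G_1}(L(\lambda_0),\nabla(\mu))^{[-1]}$ are exactly the $\nabla$-filtration inputs needed for the vanishing of $E^{a,b}_2$ for $a>0$, and the LCF identifications $L(\mu_0)\otimes\nabla(\tau^\star)^{[1]}\cong\rnabla(\mu_0+p\tau^\star)$ and $L(\lambda_0)\otimes\Delta(\tau)^{[1]}\cong\rDelta(\lambda_0+p\tau)$ are both valid since $\mu_0,\lambda_0$ are $p$-regular restricted weights.
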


\begin{rems}(a) Choose a total ordering $\lambda_0\prec\lambda_1\prec\cdots$ of $X_{\text{\rm reg}}(T)_+$ with the property that
$\lambda\leq \mu\implies \lambda\prec\mu$. Since $\Ext^1_G(\nabla(\lambda),\nabla(\mu))\not=0$ implies
that $\mu<\lambda$, an explicit description (in some sense) of a $\nabla$-filtration of any of the above
$\Ext^n_{G_1}$-groups can be given once the $\nabla$-multiplicities are calculated.

(b) Observe that
$$\begin{cases}\dim\Ext^n_G(\Delta(\lambda),\rnabla(\mu_0+p\tau))=\dim\Ext^n_{U_\zeta}(\Delta_\zeta(\lambda),
L_\zeta(\mu_0+p\tau)),\\
\dim\,\Ext^n_{G}(\rDelta(\lambda),\rnabla(\mu_0))=\dim\,\Ext^n_{U_\zeta}
(L_\zeta(\lambda),L_\zeta(\mu_0))
\end{cases}$$

(c) In (\ref{formla3}), if $\lambda=0$, we find, using \cite[Lem. 4.1(b)]{CPS7} that the total multiplicity
of $\nabla(\tau)$ as a section in a $\nabla$-filtration of $H^\bullet(G_1,\nabla(\mu))^{[-1]}$ equals
the dimension $\dim\Delta(\tau)_\xi$ of the $\xi$-weight space in $\Delta(\tau)$. Here we write
$\mu=w\cdot 0 +p\xi$, $\sigma\in C_p$.
\end{rems}

 \section{Appendix I: syzygies}

 This appendix coordinates the representation theory of a positively graded ``subalgebra" $\fa$ with that of a larger algebra, which is allowed to be graded or ungraded.  In fact, both cases arise, and we will use $B$ for an algebra which is graded, and $A$ for an algebra that may not have a grading. We will assume that $\fa$ is an actual subalgebra of $A$, but, for $B$ we require only that we have only a natural homomorphism $\fa\to B$ of graded algebras, which might well also be injective. In applications, $B$ will arise as the graded algebra $\wgr A$ associated with a filtration of $A$, and the map $\fa\to B$ will occur naturally from this construction. We set this up in reasonable generality in \S8.2, which is aimed at coordinating the representation theory of all three algebras. The first \S8.1 deals with the graded algebras $\fa$ and $B$ only.   We largely have in mind here the case where $\fa$ is a Koszul algebra, though the results are formulated under only the assumption
that  $\fa$ is positively graded. A central issue addressed is how to formulate the notion of a nice resolution in $B$-grmod of a module which, in $\fa$-grmod, has a linear resolution. This leads to the notation of a semilinear
 resolution, formulated below. Another concept in \S8.1 is the notion of the ``flat" radical of a
 (graded or ungraded) module over a graded algebra. When $\fa$ and $B$ are sufficiently closely related
 (see Definition \ref{flathead}), the flat radical $\rad^\flat M$ of any $B$-module $M$, whether taken with respect to $\fa$ or $B$, give the same subspace. In \S8.2, the quotient module $\hd^\flat M:=M/\rad^\flat M$ is also a $A$-module. In this way, the representation theories of $A$, $\fa$ and $B$ can be coordinated. The consequent results---here all cast in an abstract finite dimensional algebra setting---play an
 important role in the algebraic group results in \S5. This is discussed more at the end of this section.

 In this section, all algebras and modules for them will always be finite dimensional over the field $k$.

 \subsection{Syzygies of graded modules.}
	   Let $\fa=\bigoplus_{n\geq 0}\fa_n$ be a positively graded algebra.  Generalizing slightly the terminology of \S2.5, a graded $\fa$-module $M$ is said to be {\it linear of degree}
   $m\in{\mathbb Z}$ if the following conditions hold:
   \begin{enumerate}
   \item[(i)] $M$ is generated by its grade $m$-component $M_m$, and
   \item[(ii)] if $M$ has a graded projective resolution $\cdots\to P_{m+1}\to P_m\to M\to 0$ such that, for each $i\geq m$,
 $\Omega_{i+1}:=\ker(P_i\to P_{i-1})$ is generated by its grade $i+1$-component $\Omega_{i+1,i+1}$.
 (Here $P_{m-1}:=M$.)
 \end{enumerate}

 Clearly, $M$ is linear of degree $m$ if and only if it satisfies condition (i), and condition (ii) holds for
 its minimal graded projective resolution. In this case,
 $\Omega_{i+m}$ is called the $i$th syzygy module of $M$.

 Thus, the usual notion of a linear (or Koszul) module is the same as that of an $\fa$-module which is
 linear of degree 0. The $m$th syzygy of such a module is linear of degree $m$.

 It is useful to have a notion which applies to syzygies in more general resolutions. A graded $\fa$-module $M$ will be called {\it semilinear of degree $m$} if $M$ is a direct sum $M=N\oplus P$, where $N$ is linear of
 degree $m$ and $P$ is projective and is generated by its components in grades $< m$, i.~e., $P=\fa(P_{<m})$, where $P_{<m}:=
 \bigoplus_{i<m}P_i$.  Many important resolutions that we encounter of linear modules have $m$th syzygies
 which are semilinear of degree $m$. We are able to show this by proving that semilinearity is ``inherited" in the short exact sequences building the resolutions we require, and it provides considerable  structure for these
 resolutions. Before stating the main theorem in this direction, we introduce more notation and give some
   general preliminary results.

   \begin{defn}\label{flathead} Let $E$ be any graded  or ungraded $\fa$-module. Define the ``flat radical" of $E$ to be
   $$\rad^\flat E:=\fa_{\geq 1}E:=\sum_{i\geq 1}\fa_iE.$$
   Also, the ``flat head" of $E$ is
   $$\hd^\flat E:=E/\rad^\flat E.$$
   \end{defn}

   Observe that $\rad^\flat E=(\rad^\flat\fa)E\subseteq (\rad\fa)E=\rad E,$
   since $\fa_{\geq 1}=\rad^\flat\fa$ is a nilpotent ideal of $\fa$.

   Now suppose that $E$ is graded $\fa$-module. Both $\rad^\flat E$ and $\hd^\flat E$ are graded
   $\fa$-modules, and $\hd^\flat E$ decomposes as an $\fa$- (or $\fa_0$-) module as
   $\hd^\flat E=\bigoplus_{i\in{\mathbb Z}}(\hd^\flat E)_i$. There is also a natural identification
   $(\hd^\flat E)_i=E_i/\sum_{j>0}\fa_jE_{i-j}$, for each $i\in\mathbb Z$.

   For any graded $\fa$-module $E$, and $s\in{\mathbb Z}$, define graded $\fa$-submodules
   \begin{equation}\label{syzy1.5}\begin{cases} E^s:=\fa E_s,\\ E^{\leq s}:=\sum_{j\leq s}E^j, \\ E^{<s}:=E^{\leq s-1}, \\
   E^{\# s}=E^{\leq s}/E^{<s}.\end{cases}
  \end{equation}
   There is a natural filtration
   \begin{equation}\label{syzy1} \cdots \subseteq E^{\leq s}\subseteq E^{\leq s+1}\subseteq\cdots
   \end{equation}
   with, of course, only finitely many distinct terms. There is a corresponding filtration of the graded quotient
   module $\hd^\flat E$ of $E$, and we have, for each $s\in{\mathbb Z}$, natural isomorphisms

  \begin{equation}\label{syzy2}\begin{cases}
  \hd^\flat E^{\leq s}\cong (\hd^\flat E)^{\leq s},\\ \hd^\flat E^{\# s}\cong(\hd^\flat E)^{\#s}\cong(\hd^\flat E)_s.
  \end{cases}
  \end{equation}
  Any homomorphism $E\to F$ of graded $\fa$-modules  induces maps $E^{\leq s}\to F^{\leq s}$ and
  $E^{\# s}\to F^{\# s}$, both surjections whenever the original map is a surjection.

  \begin{defn}\label{tighty} If $\fa \to B$ is a morphism of graded algebras, we say that $B$ is (left) {\it tight
  over} $\fa$ if $\fa B_0=B$. (There is, of course, a corresponding right hand notion.\footnote{The word
  ``tight" in this paper is an adjective applying in many not necessarily related contexts. In particular, $B=\fa$ is always tight over $\fa$, but $\fa$ is not
  necessarily a tightly graded algebra---which means that it is generated by $\fa_0$ and $\fa_1$.})\end{defn}

  When $B$ is tight over $\fa$, and $E=E'|_\fa$, for a graded $B$-module $E'$, then all the graded
  $\fa$-modules listed in (\ref{syzy1.5}) inherit natural graded $B$-module structures from $E'$, for
  any $s\in{\mathbb Z}$. In fact, $E^s=E^{\prime s}|_\fa$, etc.

  \begin{lem} \label{syzyLem} Suppose that $M$ is a graded semilinear $\fa$-module of degree $m$.

  (a)  All the inclusions
  in the filtration (\ref{syzy2}) are split as graded $\fa$-modules, and there is a direct sum decomposition
  $M\cong\bigoplus_{s\in{\mathbb Z}} M^{\# s}$ in which $M^{\# m}$ is linear of degree $m$, $M^{\# s}$
  is projective (and generated in grade $s$) for $s\not=m$, and $M^{\# s}=0$ for
  $s> m$.

  (b) Moreover, $M^{\# m}$ naturally inherits a $B$-module structure $M^{\prime\# m}$, whenever
  $B$ is a graded algebra which is tight over $\fa$ and $M=M'|_\fa$, for a graded $B$-module $M'$.
  Also, the natural surjection $M\twoheadrightarrow M^{\# m}$ agrees by restriction with the natural
  surjection $M'\twoheadrightarrow M^{\prime \# m}$.
  \end{lem}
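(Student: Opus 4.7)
The plan is to reduce the lemma to a careful analysis of how the canonical filtration $\{M^{\leq s}\}$ interacts with a chosen decomposition $M=N\oplus P$ witnessing the semilinearity. First I would fix such a decomposition, with $N$ linear of degree $m$ and $P=\fa P_{<m}$ projective. By Krull--Schmidt in $\fa$-grmod, $P$ splits as a direct sum of indecomposable graded projectives, each of the form $\fa e\langle -t\rangle$ and so generated in a single grade $t$; grouping by $t$ gives $P=\bigoplus_{t<m}P^{(t)}$, with $P^{(t)}$ projective and generated in grade $t$ (the restriction $t<m$ comes from $P=\fa P_{<m}$). Two elementary observations then drive everything: $N_j=0$ for $j<m$ (because $N=\fa N_m$ and $\fa$ is positively graded), and $P^{(t)}_j=0$ for $j<t$ while $P^{(t)}_j\subseteq\fa_{\geq 1}P^{(t)}_t$ for $j>t$.

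Next I would compute $M^{\leq s}$ directly from the decomposition. For $s<m$ one gets $M^{\leq s}=\sum_{j\leq s}\fa P_j=\bigoplus_{t\leq s}P^{(t)}$, whence $M^{\#s}\cong P^{(s)}$, projective and generated in grade $s$. For $s=m$ the crucial point is that $P$ is generated below grade $m$, so $P_m\subseteq\fa_{\geq 1}P_{<m}\subseteq M^{<m}$ and therefore $\fa P_m\subseteq M^{<m}$; hence $M^{\leq m}=N+M^{<m}$, the sum is direct since $N\cap P=0$, and $M^{\#m}\cong N$ is linear of degree $m$. For $s>m$, both $N_s\subseteq\fa_{\geq 1}N\subseteq\fa M_{<s}$ and $P_s\subseteq\fa M_{<s}$ give $\fa M_s\subseteq M^{<s}$, so $M^{\#s}=0$. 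These identifications exhibit each inclusion $M^{<s}\subseteq M^{\leq s}$ as split with complement $P^{(s)}$ (respectively $N$ when $s=m$), and they assemble into the claimed direct sum decomposition $M\cong\bigoplus_s M^{\#s}$. In particular $M=M^{\leq m}$.

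For part (b) the tightness hypothesis $B=\fa B_0$ does all the work. For any $B_0$-stable subspace $V\subseteq M'$ one has $BV=\fa B_0 V\subseteq\fa V$, with the reverse inclusion automatic from $\fa\subseteq B$. Applied to the homogeneous pieces $M'_j$ (which are $B_0$-stable by gradedness), this shows $\fa M'_j$ is a graded $B$-submodule of $M'$. Hence $M^{\leq s}$ and $M^{<s}$ are $B$-submodules for every $s$, so $M^{\#m}=M^{\leq m}/M^{<m}$ inherits a graded $B$-module structure $M^{\prime\# m}$; since $M=M^{\leq m}$ by (a), the surjection $M'\twoheadrightarrow M^{\prime\# m}$ is just the quotient by the $B$-submodule $M^{<m}$, which on restriction to $\fa$ coincides with $M\twoheadrightarrow M^{\# m}$.

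I expect no genuine obstacle; the argument is essentially bookkeeping once the Krull--Schmidt decomposition $P=\bigoplus P^{(t)}$ is in hand. The only place intuition might slip is in conflating ``generated in grades $<m$'' with ``supported in grades $<m$'' for the projective summand $P$, and the whole weight of the $M^{\#m}\cong N$ identification rests on the identity $P_m\subseteq\fa_{\geq 1}P_{<m}$, which is precisely the former condition in action.
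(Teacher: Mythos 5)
Your proof is correct and is essentially the paper's own argument with the recursion unrolled: both hinge on identifying $M^{<m}=P$ and $M^{\# m}\cong N$ from the fixed decomposition $M=N\oplus P$, and on observing (via tightness) that each $M^{\leq s}$ is a $B$-submodule when $M=M'|_\fa$. The only presentational difference is that the paper peels off one grade of $P$ at a time (treating $P$ as semilinear of degree $m-1$ and iterating), whereas you decompose $P=\bigoplus_{t<m}P^{(t)}$ upfront and compute all the $M^{\# s}$ directly.
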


  \begin{proof} By definition, $M\cong N\oplus P$, where $N$ is linear of degree $m$ and $P$ is a graded
  projective $\fa$-module generated in grades $<m$.  Of course, $M^{<m}$ is also generated in grades $<m$,
  hence projects to 0 in $N$, which has $N_s=0$ for $s<m$. Therefore, $M^{<m}=P$, $N\cong M^{\# m}$,
  and $M\cong M^{\# m}\oplus M^{<m}$ with $M^{<m}=P$. The projective module $P$ qualifies as a graded
  semilinear $\fa$-module of degree $m-1$, so the process can be repeated, obtaining $M^{<n}\cong
  M^{\# m-1}\oplus M^{< m-1}$ with
  $M^{< m-1}$ projective, etc. This proves (a).

  Finally, (b) follows from the discussion preceding the statement of the lemma. \end{proof}

  \begin{rems}\label{discussion} We have implicitly assumed that projective covers exist in the
  category of graded $B$-modules, for any positively graded algebra $B$. We will elaborate on this
  a little.

(a)  First, consider the case of the category $B$-mod of ungraded $B$-modules.
   Observe that the exact restriction functor $B{\text{\rm --mod}}\longrightarrow B_0$--mod
  has a right exact left adjoint $B\otimes_{B_0}-$. Thus, if
  $P$ be any projective
  $B_0$-module, then $B\otimes_{B_0}P$ is a  projective $B$--module.
  Every projective $B$-module has this form. In fact, the irreducible $B$-modules naturally identify with
  the irreducible $B_0$-modules. If $L$ is an irreducible $B_0$-module with projective cover $P$ in $B_0$--mod,
  then $B\otimes_{B_0}P$ is the projective cover of $L$ regarded as an $B$-module.

 (b) Second, a similar construction
  works at the graded level. First, regard $B_0$ as a positively graded algebra concentrated in grade 0. The
  graded projective $B_0$-modules are just projective $B_0$-modules $P$ equipped with a direct sum
  decomposition $P=\bigoplus_{i\in\mathbb Z} P_i$ in $B_0$-mod, with $P_i$ viewed as a graded $B_0$-module
  concentrated in grade $i$. In this way, $P$ is a graded $B_0$-module. Again, the exact restriction
  functor $B{\text{\rm --grmod}}\longrightarrow B_0$-grmod has right exact left adjoint $B\otimes_{B_0}-$.
  In fact, if $X=\bigoplus_{i\in \mathbb Z}X_i\in B_0$--grmod, then $(B\otimes_{B_0} X)_j :=
  \bigoplus_{i\in\mathbb Z}B_i\otimes_{B_0}X_{j-i}$, for each $j\in\mathbb Z$, defines $B\otimes_{B_0}X$
  as a graded $B$-module. If $X=P$ is projective in $B_0$-grmod, then
  $R:=B\otimes_{B_0}P$ is projective in $B$-grmod. We have
  $$R=\bigoplus_{s\in\mathbb Z} R^{\# s}\quad{\text{\rm and}}\quad R^{\# s}\cong {B\otimes_{B_0}P_s,\quad (s\in\mathbb Z}).$$
  If $R\to N$ is a homomorphism in $B$-grmod, then the image of $R^{\# s}$ is contained in $N^{\leq s}$.
  Moreover, $R\to N$ is surjective if and only if all the composite maps $R^{\#s}\to N^{\leq s}\to N^{\# s}$ are
  surjective.   If $P$ is the projective cover in $B_0$-grmod of $\hd^\flat N=
  \bigoplus_{s\in\mathbb Z}\hd^\flat N^{\# s}$, then $R=B\otimes_{B_0}P$ is the projective cover of $N$ in
  $B$-grmod. So each $R^{\# s}\to N^{\# s}$ is surjective in this case. While $R^{\# s}$ is a direct summand of $R$, the module $N^{\# s}$ is, in general,
  only a section of $N$. Finally, forgetting the gradings, $R$ is the projective cover of $N$ in $B$--mod.
   \end{rems}

 We now state the main theorem of this subsection.
 \begin{thm}\label{mainSecThm} Suppose that $\fa\to B$ is morphism of positively graded algebras such that
 $B$ is tight over $\fa$. Let $N$ be a graded $B$-module such that $N|_\fa$ is semilinear of degree $m$. Suppose
 there is given a projective $B$-module $P$ such that $P|_\fa$ is also projective and such that there is a
 surjection
 $P\twoheadrightarrow N$ in $B$-mod. Then the following statements hold:

 (a)  Let $R\twoheadrightarrow N$ be the projective cover in the category $B$-grmod. Then
  $R|_\fa$
 projective in $\fa$-grmod.

 (b) In the short exact sequence $0\to E\to R\to N$ in $B$-grmod,  $E:=\ker(R\twoheadrightarrow N)$ is semilinear of degree $m+1$ in $\fa$-grmod.

 (c) The graded $B$-modules $E^{\# m+1}$ and $N^{\# m}$, when restricted to $\fa$, are linear of degrees
 $m+1$ and $m$, respectively.

 (d) There is, up to isomorphism, a unique graded $B$-module $P'$ for which there is a graded $B$-module
 homomorphism $P'\to N^{\# m}$ becoming a projective cover upon restriction to $\fa$. The kernel of this map
 is isomorphic to $E^{\# m+1}$, and the resulting short exact sequence $0\to E^{\# m+1}\to P'\to N^{\# m}\to 0$
 in $B$-grmod is unique up to isomorphism (assuming $P'|_\fa$ is projective).

 (e) The short exact sequences in (b) and (d) (in $B$-grmod) fit into a commutative diagram with exact rows and
 natural surjective vertical maps:
 $$\begin{CD}
 0 @>>> E^{\# m+1} @>>> P' @>>> N^{\# m} @>>> 0\\
 @.  @AAA @AAA @|| @.  \\
 0 @>>> X @>>> R^{\# m} @>>> N^{\# m} @>>> 0\\
 @. @AAA @AAA  @AAA @.\\
 0 @>>> E @>>> R @>>> N @>>> 0.\end{CD}
 $$
 \end{thm}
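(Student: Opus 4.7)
The plan is to prove (a)--(e) in sequence, with (a), (b), and (d) doing the real work while (c) and (e) fall out as consequences. For (a), I would realize the projective cover $R$ of $N$ in $B$-grmod as a graded direct summand of the given $P$, using the standard fact that any projective module surjecting onto $N$ contains the projective cover as a summand. Restricting to $\fa$-grmod then exhibits $R|_\fa$ as a graded direct summand of the projective module $P|_\fa$, hence itself projective. The foundational observation powering the rest is that tightness $B=\fa B_0$ forces $B_{\geq 1}=\fa_{\geq 1}B_0$, so for any $B$-module $M$ one has $B_{\geq 1}M=\fa_{\geq 1}B_0M=\fa_{\geq 1}M$. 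Consequently $\rad^\flat_BM=\rad^\flat_\fa M$, and the filtration $M^{\leq s}$, the head $\hd^\flat M$, and the summand decomposition $M=\bigoplus_s M^{\#s}$ of Lemma \ref{syzyLem} are insensitive to whether one works over $\fa$ or $B$.

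For (b), write $R=B\otimes_{B_0}P_0$ with $P_0=\bigoplus_sP_{0,s}$ graded projective over $B_0$, giving $R=\bigoplus_sR^{\#s}$ with $R^{\#s}=B\otimes_{B_0}P_{0,s}$ generated in grade $s$. Since $N|_\fa$ is semilinear of degree $m$, its head $\hd^\flat N$ lives in grades $\leq m$, forcing $P_{0,s}=0$ and $R^{\#s}=0$ for $s>m$; hence $R|_\fa$ is $\fa$-generated in grades $\leq m$. Writing $R|_\fa=R^{\min}\oplus Q$ with $R^{\min}$ the \emph{minimal} projective cover of $N|_\fa$ in $\fa$-grmod and $Q$ a projective complement, the kernel decomposes as $E|_\fa=E^{\min}\oplus Q$. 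By Lemma \ref{syzyLem}(a) the minimal first syzygy $E^{\min}$ is linear of degree $m+1$; meanwhile $Q$, as a summand of $R|_\fa$, inherits the property of being generated in grades $\leq m<m+1$. Hence $E|_\fa$ is semilinear of degree $m+1$, which is (b). Part (c) is then immediate from Lemma \ref{syzyLem}(a) applied to $N|_\fa$ and $E|_\fa$.

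For (d), the natural candidate is $P':=$ the projective cover of $N^{\#m}$ in $B$-grmod, using that $N^{\#m}$ carries a natural $B$-module structure by Lemma \ref{syzyLem}(b). I would verify that $P'|_\fa$ is the $\fa$-grmod projective cover of $N^{\#m}|_\fa$ using the agreement of flat radicals and the fact that $\hd^\flat N^{\#m}\cong (N^{\#m})_m$ sits in a single grade. Uniqueness then reduces to the statement that a graded $B$-module whose restriction to $\fa$ is projective cover of $N^{\#m}|_\fa$ is determined up to isomorphism by the $B_0$-head. The diagram in (e) is then assembled row by row: the top row is given by (d); the middle row has $R^{\#m}$ (the summand of $R$ generated in grade $m$) surjecting onto $N^{\#m}$ via the composite $R^{\#m}\hookrightarrow R\twoheadrightarrow N\twoheadrightarrow N^{\#m}$, with $X$ defined as the kernel; the bottom row is the original sequence from (b). The vertical surjections are the tautological projections $R\twoheadrightarrow R^{\#m}$ and $R^{\#m}\twoheadrightarrow P'$, the latter obtained by splitting off the non-minimal projective summand of $R^{\#m}|_\fa$ (shown to be $B$-stable using tightness).

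The main obstacle will be the uniqueness half of (d): one must show that the $B$-structure on $P'$ is completely pinned down by the $\fa$-projective-cover property together with the identification of the head as a $B_0$-module. This is essentially a statement that tightness $B=\fa B_0$ allows one to reconstruct $B$-module structures from $\fa$-grmod data plus $B_0$-head data, and it must be handled with care since the map $\fa\to B$ is not assumed injective. Once this uniqueness is in hand, the identification of $X$ as $E^{\#m+1}$ in the top-left corner of the diagram follows because $R^{\#m}$ differs from the minimal cover $P'$ only by a projective $\fa$-summand generated in grade $m$, so passing to the minimal cover peels off exactly the non-linear projective contribution $Q$ absorbed into $E|_\fa$ in (b), leaving the pure linear piece $E^{\#m+1}$.
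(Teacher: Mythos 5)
Your arguments for (a), (b), and (c) are essentially sound, and in fact (b) by a direct Schanuel/Krull--Schmidt comparison of $R|_\fa\to N|_\fa$ against the minimal $\fa$-grmod cover of $N|_\fa$ is a tidier route than the paper's, which first splits $R=R^{\#m}\oplus R^{<m}$, analyzes the two pieces separately, and only then invokes Schanuel; (c) then drops out from Lemma \ref{syzyLem}(a), as you say. One small imprecision in (a): the given $P$ is only a projective $B$-module, not a graded one, so $R$ is a $B$-mod (not ``graded'') direct summand of $P$, using that the $B$-grmod projective cover is, upon forgetting grades, the $B$-mod projective cover (Remark \ref{discussion}(b)); the conclusion that $R|_\fa$ is projective is unaffected.

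Part (d), however, contains a genuine gap. You take $P'$ to be the projective cover of $N^{\#m}$ in $B$-grmod, but this is wrong in general: the theorem does not assert $P'$ is $B$-projective, and the $B$-grmod projective cover of $N^{\#m}$ need not restrict to the $\fa$-grmod projective cover of $N^{\#m}|_\fa$. A minimal counterexample: take $\fa=k$ and $B=k[x]/(x^2)$, both concentrated in grade $0$, with $k\hookrightarrow B$. Then $B$ is tight over $\fa$, $B_0=B$ is not semisimple, $N=N^{\#0}=k$ (the trivial one-dimensional $B$-module) is semilinear of degree $0$, and all hypotheses hold (take $P=B$). Your $P'$ would be the $B$-cover $B$ itself, whose restriction to $\fa$ is two-dimensional and therefore not an $\fa$-projective cover of $k$; the correct $P'$ is $k$. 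The paper instead constructs $P'$ as the \emph{quotient} $R^{\#m}/X^{m}$, where $X=\ker(R^{\#m}\twoheadrightarrow N^{\#m})$ and $X^m=\fa X_m$, after showing $X^m$ is a graded $B$-submodule which splits off $\fa$-projectively inside $R^{\#m}|_\fa$. You hint at something like this when you describe $R^{\#m}\twoheadrightarrow P'$ as ``splitting off the non-minimal projective summand,'' but this is inconsistent with identifying $P'$ as the $B$-grmod cover (which would be a \emph{summand} of $R^{\#m}$, not a quotient by one). You also flag the uniqueness half of (d) as the main obstacle and leave it unresolved; the paper settles it by lifting any competing $P^\dagger\twoheadrightarrow N^{\#m}$ (with $P^\dagger|_\fa$ a projective cover, hence generated in grade $m$) against the $B$-grmod projectivity of $R$ to obtain a surjection $R^{\#m}\twoheadrightarrow P^\dagger$, observing it must kill $X^m$ because $\ker(P^\dagger|_\fa\to N^{\#m}|_\fa)$ is linear of degree $m+1$, and then comparing dimensions. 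Part (e), which you assemble from (d), inherits the gap.
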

 \begin{proof} Consider (a). First,   $R$ is the projective cover of $N$ in $B$--mod, so $R$ is a
 $B$-direct summand of $P$. Since $P|_\fa$ is projective, we conclude that $R|_\fa$ is projective
 in $\fa$-mod. Hence, it is projective as a graded $\fa$-module.

 For parts (b)---(d), by Remark \ref{discussion}, $R=\bigoplus_{s\in\mathbb Z}R^{\# s}$, and,
 in this case, $R^{\# s}=0$ if $s> m$ (the semilinearity degree of $N$). In addition to (a), this is the main property of $R$ that
 will be needed.

 Now we prove (c).  Observe, by Remark \ref{discussion}, the surjection $R\twoheadrightarrow N$
 induces surjections $R^{\# s}\twoheadrightarrow N^{\# s}$, for all $s$. Also, because $N$ is semilinear
 of degree $m$, $N|_\fa=(N|_\fa)^{\# m}\oplus (N|_\fa)^{< m}$.  Also, $(N|_\fa)^{< m}=\bigoplus_{s<m}
 N^{\# s}$.

   First, let $X:=
 \ker(R^{\# m}\twoheadrightarrow N^{\# m})$. The module $N^{\# m}|_\fa$ is linear by Lemma
 \ref{syzyLem} and the map from $R^{\# m}$ is surjective. So $X|_\fa$ must be the direct sum of a linear
 module of degree $m+1$ and a graded projective $\fa$-module, the latter a summand of $R^{\# m}$. (This is
 a standard argument using minimal projective covers in $\fa$-grmod, and it is left to the reader.) All
 summands of $R^{\# m}$ are generated in grade $m$, so that $X|_\fa\cong(X|_\fa)^{\# m+1}
 \oplus (X|_\fa)^m$, and $(X|_\fa)^m$ is projective in $\fa$-grmod.

 Second, let $Y=\ker(R^{< m}\twoheadrightarrow N^{<m})$. As noted above,
 $N|_\fa=(N|_\fa)^{\# m}\oplus (N|_\fa)^{< m}$. Clearly, $Y|_\fa$ is a direct sum of projective modules
 generated in grades $<m$.

 However, the given surjection $R\twoheadrightarrow N$ in $B$-grmod need not be the direct sum of
 above surjections $R^{\# m}\twoheadrightarrow N^{\# m}$ and $R^{<m}\twoheadrightarrow N^{<m}$, i.~e., there is a (possibly) different graded $\fa$-module
 surjection $R|_\fa\twoheadrightarrow N|_\fa$. But  Schanuel's lemma and the Krull-Schmidt theorem in
 $\fa$-grmod, $X|_\fa\oplus Y|_\fa\cong E|_\fa$. Consequently, $E|_\fa$ is semilinear of degree $m+1$.
 This proves (b). We also obtain that $X|_\fa$ is semilinear of degree $m+1$.

 By the above decomposition of $E|_\fa$ and of $X|_\fa$, together with Lemma \ref{syzyLem},
 $E^{\# m+1}|_\fa\cong X^{\# m+1}|_\fa$ is linear of degree $m+1$, and that $N^{\# m}|_\fa\cong
 (N|_\fa)^{\# m}$ is linear of degree $m$. This proves (c).

 To prove (d), observe that $X^m$ is a $B$-grmod submodule of $R^{\# m}$, and, as noted above (with the proof
 left to the reader), an $\fa$-summand of $R^{\# m}|_\fa$. In fact, the same analysis shows that the inclusion
 $X^m\subseteq R^{\# m}$ is split upon restriction to $\fa$, with $P':=R^{\# m}/X^m$ projective upon
 restriction to $\fa$. This gives the existence of an exact sequence $0\to E^{\# m+1}\to P'\to N^{\# m}\to 0$
 as required in the existence part of (d). Here we have used the identifications of graded $B$-modules
 $$\begin{aligned}
 E^{\# m+1}=E/E^{\leq m} &\cong (E/E^{< m})/(E^{\leq m}/E^{< m})\\
 & \cong (E/E^{<m})/(E/E^{< m})^{\leq m} \\ & \cong
 X/X^{\leq m} \\
 & =X^{\# m+1}.\end{aligned} $$
 Next, suppose that $P^\dagger\twoheadrightarrow N^{\# m}$ is any surjection in $B$-grmod with
 $P^\dagger|_\fa\twoheadrightarrow N^{\# m}|_\fa$ a projective cover. Then $P^\dagger|_\fa$ is generated in
 grade $m$, so there a commutative diagram (in $B$-grmod)
 $$\begin{CD}
 0 @>>> \Omega @>>> P^\dagger @>>> N^{\# m} @>>> 0 \\
 @.  @AAA  @AAA @AAA  @.\\
 0 @>>> X @>>> R^{\# m} @>>> N^{\#m} @>>> 0\end{CD}
 $$
  with horizontal rows exact. The middle vertical map arises from the projectivity of
  $R\in B$-grmod, the fact that $P^\dagger$ is generated in grade $m$, and the description
  $R^{\# m}=R^{\leq m}/R^{<m}$. This middle vertical map is surjective by Nakayama's lemma. (Note that  $P^\dagger|_\fa\to
  N^{\# m}|_\fa$ is a projective cover as an ungraded map, whether given as a graded or ungraded
  cover, by Remark \ref{discussion}.) The module $\Omega|_\fa$, as a first syzygy, in a minimum
  graded projective resolution of $N^{\# m}|_\fa$, is necessarily linear of degree $m+1$. So the
  vertical map $X|_\fa\to \Omega$ must kill $X^m$. Thus, there is an induced commutative diagram
  $$\begin{CD}
  0 @>>> \Omega @>>> P^\dagger @>>> N^{\# m} @>>> 0\\
  @. @AAA @AAA @|| @.\\
  0 @>>> X^{\# m+1} @>>> P' @>>> N^{\#m} @>>> 0,\end{CD}
  $$
  where $P'=R^{\# m}/X^m$ is as constructed above.  Since
  $P'|_\fa$ and $P^\dagger|_\fa$, as projective covers of $N^{\# m}|_\fa$, both have the same dimension,
  the surjective middle vertical map is an isomorphism. We have already identified $X^{\# m+1}\cong
  E^{\# m+1}$, so $\Omega\cong E^{\# m+1}$. If we are given any exact sequence $0\to E^{\# m+1}\to
  P^{\prime\prime}\to N^{\# m}\to 0$ with $P^{\prime\prime}|_\fa$ projective, then $P^{\prime\prime}$ has the same dimension as $P'$, so the above argument gives both an isomorphism $P^{\prime\prime}\cong P'$,
  and a similar isomorphism of exact sequences with end terms $E^{\# m+1}$ and $N^{\# m}$. This proves
  (d).

  Finally, (e) is easily obtained from the descriptions of $X=\ker(R^{\# m}\to N^{\# m})$, and
  $P'=R^{\# m}/X^m$ in the discussion above.  The map $E\to X$ is surjective by a snake lemma argument.
  (Note that $R^{<m}\to N^{<m}$ is surjective.)
  \end{proof}

  \subsection{Gradings induced by graded subalgebras.} An important case occurs when the
  grading of the algebra $B$ results from a filtration of another algebra $A$, induced by a sufficiently ``normal"
  graded subalgebra $\fa$. More precisely, throughout this subsection, the
  following conditions are in force:

  \begin{enumerate}
  \item [(i)] $\fa$ is positively graded and $\fa\to A$ is a homomorphism of algebras.

  \item [(ii)] For each $j\geq 0$, put $\fa_{\geq j}:=\bigoplus_{i\geq j}\fa_i$. Then $\fa_{\geq j}A$ is
  required to be
  an ideal in $A$. That is, $A\fa_{\geq j}A=\fa_{\geq j}A$. (In applications,
  $A\fa_{\geq j}=\fa_{\geq j}A$.)

  \item [(iii)] Define
  $$B=\wgr A:=\bigoplus_{j\geq 0} \fa_{\geq j}A/\fa_{\geq j+1}A.$$
  \end{enumerate}

  Condition (ii) implies that the algebra $B$ defined above is  positively graded. There is a graded morphism
  $\fa\to B$ such that
   $\fa_jB_0=B_j$, for each $j\geq 0$. That is, $B$ is tight over $\fa$, as per Definition \ref{tighty}. In most applications, the map $\fa\to B$ will be an inclusion.

  Every $A$-module $M$ is naturally an $\fa$-module,  so the $\fa$-modules $\hd^\flat M$ and
  $\rad^\flat M$ are defined, using Definition \ref{flathead}. By (ii), they are also modules for $A/\fa_{\geq 1} A=B_0$.

  \begin{defn} \label{hybrid}
  An $A$-module equipped with a fixed graded $\fa$-module structure
  will be called {\it hybrid}. Morphisms of hybrid $A$-modules are just morphisms of $A$-modules which
  preserve the given $\fa$-gradings.
  \end{defn}

  The hybrid $A$-modules form an abelian category, exactly embedded
  in the category of $A$-modules. A hybrid $A$-module $N$ is {\it admissible} if each subpace $N_{\geq j}:=
  \bigoplus_{i\geq j}N_i$ is an $A$-submodule. The admissible objects form a full abelian subcategory of the category of hybrid $A$-modules. Given an admissible hybrid $A$-module $N$, one can form a
  graded $B$-module
  $$\wGr N:=\bigoplus_{j\in\mathbb Z}N_{\geq j}/N_{\geq j+1}.$$
  Here the capitalizing $\wGr$ is used to help distinguish this module from
  $$\wgr N:=\bigoplus_{j\geq 0}\fa_{\geq j}N/\fa_{\geq j+1}N$$
  defined in (\ref{gradedzoo}).

  The category of hybrid $A$-modules is equipped with natural grade shifting functors $N\mapsto N\langle r\rangle $, for every $r\in{\mathbb Z}$. Recall from \S1.1 that
  $N\langle r\rangle _j:=N_{j-r}$. If $N$ is admissible, so is $N\langle r\rangle $, and
  $$\wGr N\langle r\rangle =(\wGr N)\langle r\rangle , \quad r\in{\mathbb Z}.$$
  Finally,
  $$(\wGr N)|_\fa \cong N|_\fa, \quad{\text{\rm in $\fa$-grmod}}.$$

We now construct some admissible hybrid $A$-modules. Suppose that $R$ is an $A$-module equipped with a
decreasing filtration  by $A$-submodules $\{{^iR}\}$, $i\in\mathbb Z$; thus, $\cdots \supseteq{^{i-1}R}\supseteq {^iR}\supseteq{^{i+1}R}\supseteq\cdots$. Assume that $^iR/{^{i+1}R}$ is projective
as an $\fa$-module, for each $i$. Also, assume that $^iR=R$, for $i$ sufficiently small, and $^iR=0$, for $i$ sufficiently large. In particular, each $^iR$ is projective as an $\fa$-module and has a decomposition
$^iR={^iR}/{^{i+1}R} \oplus {^{i+1}R}$ as a direct sum of projective modules.
Choose any $\fa_0$-stable complement $h_i$ to $^{i+1}R + \rad^\flat({^iR})$ in $^iR$. (Such an $h_i$ exists
because all projective $\fa$-modules $X$ may be given a grading $X\cong \fa\otimes_{\fa_0}\hd^\flat X$
corresponding to any $\fa_0$-grading of $\hd^\flat X$.) Then $h:=\sum_{i\in\mathbb Z}h_i\cong\bigoplus_{i\in\mathbb Z}h_i$ is an $\fa_0$-submodule of $R$ and a complement to $\rad^\flat R$. As an $\fa$-module,
$$R=\fa h\cong\fa\otimes_{\fa_0}h\cong \bigoplus_i \fa\otimes_{\fa_0}h_i.$$
 We now give $R$ an $\fa$-grading by assigning each $h_i$ grade $i$. The resulting hybrid structure on $R$ is admissible, since
$$R_{\geq s}={^sR} + \fa_{\geq 1}({^{s-1}R}) + \fa_{\geq 2}({^{s-2}R}) +\cdots.$$
The flexibility to choose the $\fa_0$-submodules $h_i$ generating the $\fa$-grading is quite useful.

\begin{prop}\label{hybrid} Let the $A$-module $R$ have a decreasing filtration $\{{^iR}\}_{i\in\mathbb Z}$ as above. Suppose that $N$ is an admissible hybrid $A$-module, and $\phi:R\twoheadrightarrow N$ is a surjection of $A$-modules such that $\phi({^iR})
=N_{\geq i}$, for each $i\in \mathbb Z$. Then there is a choice of $\fa_0$-submodules $h_i$ so that, as
above, $h_i$ is an $\fa_0$-stable complement to $^{i+1}R+ \rad^\flat({^iR})$ in $^iR$, and, additionally,
$\phi(h_i)\subseteq N_i$ ($i\in\mathbb Z$). The induced $\fa$-grading on $R$ gives $R$ an admissible hybrid
structure and $\phi$ becomes a surjective homomorphism of admissible hybrid $A$-modules.\end{prop}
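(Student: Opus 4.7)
The plan is to refine the standard construction of the $h_i$'s recorded just before the proposition so that the compatibility $\phi(h_i)\subseteq N_i$ is built in from the start. Fix $i\in\mathbb Z$ and set
$$V_i := {^iR}/({^{i+1}R}+\rad^\flat({^iR}))\cong \hd^\flat({^iR}/{^{i+1}R}),$$
which is an $\fa_0$-module. Because ${^iR}/{^{i+1}R}$ is $\fa$-projective and $\fa_{\geq 1}$ is a two-sided ideal of $\fa$ with $\fa/\fa_{\geq 1}=\fa_0$, the quotient $V_i$ is automatically projective as an $\fa_0$-module, and the standard Nakayama-style identification gives ${^iR}/{^{i+1}R}\cong\fa\otimes_{\fa_0}V_i$.

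Next, I would introduce the $\fa_0$-stable subspace
$$T_i := {^iR}\cap \phi^{-1}(N_i),$$
where stability under $\fa_0$ uses that $\phi$ is $A$-linear and that $\fa_0$ preserves $N_i$ grade-wise. The key claim is that the canonical projection $T_i\to V_i$ is an $\fa_0$-surjection. Once this is shown, projectivity of $V_i$ produces an $\fa_0$-splitting $V_i\hookrightarrow T_i$, and I take $h_i$ to be its image; by construction $h_i$ is both an $\fa_0$-stable complement to ${^{i+1}R}+\rad^\flat({^iR})$ in ${^iR}$ and contained in $T_i$, giving $\phi(h_i)\subseteq N_i$. Surjectivity of $T_i\to V_i$ reduces to showing $T_i+{^{i+1}R}={^iR}$: given $x\in{^iR}$, use admissibility of $N$ to write $\phi(x)=y+z$ with $y\in N_i$ and $z\in N_{\geq i+1}$; then pick $x''\in{^{i+1}R}$ with $\phi(x'')=z$, which is possible since $\phi({^{i+1}R})=N_{\geq i+1}$, and observe that $x-x''\in T_i$.

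With the $h_i$ in hand, the discussion immediately preceding the proposition supplies the admissible hybrid structure on $R$: one has $R\cong\bigoplus_i \fa\otimes_{\fa_0} h_i$ as $\fa$-modules, and placing each $h_i$ in grade $i$ produces
$$R_{\geq s}={^sR}+\fa_{\geq 1}({^{s-1}R})+\fa_{\geq 2}({^{s-2}R})+\cdots,$$
which is $A$-stable using hypothesis (ii) of \S8.2. Finally, $\phi(h_i)\subseteq N_i$ together with $A$-linearity of $\phi$ gives $\phi(\fa_j h_i)\subseteq \fa_j N_i\subseteq N_{j+i}$, so $\phi$ preserves grades and is therefore a (still surjective) morphism of admissible hybrid $A$-modules.

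The principal obstacle is simultaneously achieving both required properties of $h_i$ in the first step: being a complement at the level of flat heads, and lifting into the right $\phi$-preimage. The key realization is that the correct space to lift $V_i$ into is not all of ${^iR}$ but the $\phi$-preimage $T_i$ of the grade-$i$ part $N_i$ of $N$; the lifting then exists because $V_i$ is automatically $\fa_0$-projective as the flat head of the $\fa$-projective module ${^iR}/{^{i+1}R}$. Everything after that is routine bookkeeping with the $\fa\otimes_{\fa_0}\hd^\flat$ decomposition of $\fa$-projective modules.
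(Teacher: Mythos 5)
Your proof is correct, and the core mechanism is the same as the paper's: restrict the quotient map ${^iR} \to \hd^\flat({^iR}/{^{i+1}R})$ to the $\phi$-preimage of $N_i$, observe that it is still surjective, and then split it using $\fa_0$-projectivity of the flat head. Where you differ is in organization. The paper constructs the $h_i$ one grade at a time by induction on $\dim R$ (equivalently, descending through the filtration starting from the maximal $m$ with ${^mR}=R$); you instead construct all $h_i$ simultaneously and independently, via the elementary decomposition that every $x\in{^iR}$ can be corrected by an element of ${^{i+1}R}$ so that its image lands in $N_i$, which gives $T_i + {^{i+1}R} = {^iR}$ directly. The paper obtains the corresponding fact $S+{^{m+1}R}=R$ indirectly, from $\ker\phi\subseteq S$ together with $\phi(S+{^{m+1}R})=\phi(R)$. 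Your version is a little cleaner: it makes transparent that the choice of $h_i$ at grade $i$ is independent of the choices at other grades, so the induction hypothesis in the paper is doing no structural work, and the surjectivity check is explicit rather than inferred. Both routes buy the same result; yours is the more direct packaging of the same idea.
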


\begin{proof} The proposition is trivial if $R=0$, in which case just take all $h_i=0$. We may, thus, proceed by
induction on $\dim R$. Let $m\in \mathbb Z$ be maximal with $^mR=R$. Thus $^{m+1}R\not\subseteq R$.
So, the proposition holds, by induction, when $^{m+1}R$, $N_{\geq m+1}$, and $\phi|_{^{m+1}R}$ play the roles of $R$, $N$, and $\phi$, respectively. This gives $h_{m+1}, h_{m+2}, \cdots$ contained in
$^{m+1}R$, $^{m+2}R, \cdots$, respectively, such that each $h_i$ is an $\fa_0$-stable complement
to $^{j+1}R + \hd^\flat{^jR}$ in $^jR$ ($j\geq m+1)$. We need to find an $h_j$ for
$j=m$ with this property.

Put $S=\phi^{-1}(N_m)$. Then $\phi(S + {^{m+1}R})=N_{\geq m}=\phi({^mR})=\phi(R)$. Since
$\ker\phi\subseteq S\subseteq S+{^{m+1}R}$, we must have $S+{^{m+1}R}=R={^mR}$. The
surjection
$$S\to {^mR}/{^{m+1}R}\twoheadrightarrow\hd^\flat({^mR}/{^{m+1}R})$$
 is $\fa_0$-split, since the projective $\fa$-module
$X:=({^mR}/{^{m+1}R})|_\fa$ has $\hd^\flat X=X/\fa_{\geq 1}X$ as a natural projective $\fa_0\cong
\fa/\fa_{\geq 1}$-quotient module. Let $h_m$ be the image in $S$ of the splitting. By construction, the image
$$(h_m +{^{m+1}R} +\rad^\flat {^mR})/
({^{m+1}R} +\rad^\flat {^mR})$$
of $h_m$ under the map
$$S\subseteq {^mR}\to {^mR}/{^{m+1}R}\to \hd^\flat({^mR}/{^{m+1}R})\cong {^mR}/({^{m+1}R}+
\rad^\flat {^mR})$$
is an $\fa_0$-isomorphic copy of $h_m$, equal to the target of map. That is, $^mR$ is a direct sum
$h_m\oplus ({^{m+1}R} +\rad^\flat {^mR})$. Finally, $\phi(h_m)\subseteq\phi(S)\subseteq N_m$.
This proves the proposition, since $\phi$ now becomes an $\fa$-graded map on the constructed
$\fa$-grading of $R$. (Recall that $R=\fa h \cong \fa\otimes_{\fa_0} h$ as an $\fa$-module, where
$h=\sum_{j\in\mathbb Z}h_j=\bigoplus_{j\in\mathbb Z} h_j$. \end{proof}

\begin{cor}\label{corlast1} Let $N$ be an admissible hybrid $A$-module, and suppose, for each $s\in\mathbb Z$,
there is given a projective $A$-module $^{\#s}R$ and a surjection
$$\phi_s:{^{\#s}R}\twoheadrightarrow  N_{\geq s}/N_{\geq s+1}.$$
Assume that each $^{\#s}R|_\fa$ is projective and that $^{\#s}R=0$, for $|s|\gg 0$.
Lift each $\phi_s$ in any way to a $A$-module homomorphism  $\phi_{\geq 0}:{^{\#s}R}\to N_{\geq s}$. Put
$R:=\bigoplus_{s\in\mathbb Z}{^{\# s}R}$ and let $\phi:R\to N$ denote the sum of the maps $\phi_{\geq s}$.

Then $R$ has the structure of an admissible hybrid $A$-module in such a way that $\phi$ becomes a surjective homomorphism of admissible hybrid $\fa$-modules. In particular, if $E=\ker \phi$, then $E$
is admissible, and
$$0\to E\longrightarrow R\longrightarrow N\to 0$$
remains exact upon applying the functors $-|_\fa$ and $\wGr$, giving graded exact sequences in each case
(in $\fa$-grmod and $B$-grmod, respectively). The modules $R|_\fa$ and $\wGr R$ are
projective in $\fa$-grmod and $B$-grmod, respectively.
\end{cor}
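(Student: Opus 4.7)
The plan is to apply Proposition \ref{hybrid} to $R=\bigoplus_s {^{\#s}R}$ equipped with the decreasing $A$-submodule filtration ${^iR}:=\bigoplus_{s\geq i}{^{\#s}R}$. The vanishing assumption on $^{\#s}R$ makes this filtration stationary in both directions; each quotient ${^iR}/{^{i+1}R}\cong{^{\#i}R}$ is $\fa$-projective by hypothesis; and the identity $\phi({^iR})=N_{\geq i}$ would be established by descending induction on $i$, using that each $\phi_s$ surjects onto $N_{\geq s}/N_{\geq s+1}$ while $\phi_{\geq s}({^{\#s}R})\subseteq N_{\geq s}$ by construction of the lifts. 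Proposition \ref{hybrid} then furnishes $\fa_0$-submodules $h_i\subseteq{^{\#i}R}$ (complementary to $\fa_{\geq 1}({^{\#i}R})$, once one observes that $\rad^\flat({^iR})+{^{i+1}R}$ meets $^{\#i}R$ in $\fa_{\geq 1}({^{\#i}R})$) inducing an admissible hybrid $A$-module structure on $R$ for which $\phi$ is a surjective homomorphism of admissible hybrid $A$-modules.

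Next I would set $E:=\ker\phi$ and note that $E$ is admissible because $E_{\geq i}=E\cap R_{\geq i}$ is $A$-stable as the intersection of two $A$-submodules. Exactness of $0\to E\to R\to N\to 0$ in $\fa$-grmod is then just the restriction of an $A$-module exact sequence. The heart of the argument — and the step I expect to require the most care — is that exactness persists after $\wGr$. The key is that $\phi$ is \emph{strict} for the hybrid filtrations: since $\phi$ is a graded morphism we have $\phi(R_{\geq i})\subseteq N_{\geq i}$, while $\phi({^iR})=N_{\geq i}$ together with ${^iR}\subseteq R_{\geq i}$ gives the reverse inclusion. Given strictness, a routine diagram chase shows that
\[
0\longrightarrow E_{\geq i}/E_{\geq i+1}\longrightarrow R_{\geq i}/R_{\geq i+1}\longrightarrow N_{\geq i}/N_{\geq i+1}\longrightarrow 0
\]
is exact for every $i$, which is exactly the grade-$i$ component of exactness of the $\wGr$-sequence in $B$-grmod.

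For the projectivity claims, the construction of Proposition \ref{hybrid} presents $R|_\fa\cong\fa\otimes_{\fa_0}h$ with $h=\bigoplus_i h_i$ a graded $\fa_0$-module (placing $h_i$ in grade $i$), so $R|_\fa$ is graded projective in $\fa$-grmod. For $\wGr R=\bigoplus_s\wGr({^{\#s}R})$, the projectivity of ${^{\#s}R}$ as an $A$-module together with the ideal property $A\fa_{\geq 1}A=\fa_{\geq 1}A$ identifies $h_s\cong{^{\#s}R}/\fa_{\geq 1}({^{\#s}R})\cong B_0\otimes_A {^{\#s}R}$ as a projective module over $B_0=A/\fa_{\geq 1}A$; then Remark \ref{discussion}(b) yields $\wGr({^{\#s}R})\cong B\otimes_{B_0}h_s\langle s\rangle$ projective in $B$-grmod, and summing over $s$ completes the proof. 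The one genuinely substantive ingredient throughout is the strictness of $\phi$ with respect to the hybrid filtration — without it, $\wGr$ would not preserve the short exact sequence — and this is precisely what Proposition \ref{hybrid} is designed to deliver.
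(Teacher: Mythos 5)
Your proof follows the paper's own argument — apply Proposition \ref{hybrid} to the filtration ${}^iR=\bigoplus_{s\geq i}{}^{\#s}R$, note that exactness survives $|_\fa$ and $\wGr$ because admissible hybrid modules form an abelian category (strictness of $\phi$ being automatic once $\phi$ is a graded morphism), and identify $\wGr R\cong\bigoplus_s\wgr({}^{\#s}R)\langle s\rangle$ to get projectivity — and it is correct. The one imprecision is your claim that Proposition \ref{hybrid} furnishes $h_i\subseteq{}^{\#i}R$: it only gives $h_i\subseteq{}^iR=\bigoplus_{s\geq i}{}^{\#s}R$, so you cannot in general realize ${}^{\#s}R$ as a hybrid submodule of $R$ by restricting the grading. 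This does not break the argument, since the filtration $R_{\geq j}={}^jR+\fa_{\geq 1}({}^{j-1}R)+\cdots$ (and hence $\wGr R$) is determined by $\{{}^iR\}$ alone, independently of the choice of $h_i$; a direct computation gives $R_{\geq j}\cap{}^{\#s}R=\fa_{\geq\max(0,\,j-s)}({}^{\#s}R)$ and thus the decomposition $\wGr R\cong\bigoplus_s\wgr({}^{\#s}R)\langle s\rangle$ you need, which is also the paper's formula.
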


\begin{proof} Put $^jR=\bigoplus_{s\geq j} {^{\# s}R}\subseteq R$, for each $j\in\mathbb Z$. The hypotheses of
Proposition \ref{hybrid} are then satisfied. So there are admissible hybrid $A$-module structures
on the object $R$ and the morphism $\phi$, required in the first assertion. The second
assertion is just a property of all exact sequences in the category of admissible hybrid $A$-modules, and
has been essentially previously noted below Definition \ref{hybrid} (and is obvious, in any case). The final assertion, regarding graded projectivity, follows from
the following isomorphisms of graded $B$-modules:
$$ \wGr({^sR})/\wGr ({^{m+1}R})\cong (\wgr{^{\# s}R})\langle s\rangle ,\quad s\in\mathbb Z,$$
which is easily obtained by inspecting the construction. The right hand side is clearly projective both as
a graded $B$-module and as a graded $\fa$-module. This completes the proof. \end{proof}

\begin{rem}\label{remgraded}
We can sometimes trim some of the terms $^{\# s}R$ from $R$. Let $m$ be an integer such that
$N_{\geq m}=\fa N_m$. (If $\fa$ is tightly graded---that is, if $\fa$ is generated by $\fa_0$ and $\fa_1$---and if $N|_\fa$ is semilinear of degree $m$, then this equality
holds.) In this case, we do not need any $^{\# s}R$ with $s>m$, and we may redefine $^{\#  s}R=0$
and $\phi_s=0$ in the definition of $R$ and $\phi$, ignoring the requirements in the hypothesis
of Corollary \ref{corlast1} and Proposition \ref{hybrid}, so that $\phi({^i}R)=N_{\geq i}$ is assumed only
for $i\leq m$ with $^iR=0$ assumed for $i>m$. The modified analogue of Proposition \ref{hybrid}
is proved essentially as above, but beginning the argument by observing, for $S:=\phi^{-1}(N_m)\cap
{^mR}$,
$$\phi(S+\fa_{\geq 1}{^m R})=N_m+\fa_{\geq 1}N_{\geq m}=
N_{\geq m}=\phi({^mR}).$$
The revised corollary then follows as before from the modified proposition. For the convenience of the
reader, we state these two results as Proposition \ref{revisedproposition} and Corollary \ref{revisedcorollary}, without
further details of their proofs.
\end{rem}

\begin{prop}\label{revisedproposition} Let the $A$-module $R$ have a decreasing filtration $\{{^iR}\}_{i\in\mathbb Z}$ as above the statement of Proposition \ref{hybrid}. Suppose that $N$ is an admissible hybrid $A$-module, and  $m$ is an integer with
$\fa N_m=N_{\geq m}$. Suppose $\phi:R\twoheadrightarrow N$ is a surjection of $A$-modules such that $\phi({^iR})
=N_{\geq i}$, for each $i\in \mathbb Z$ with $i\leq m$, and $^iR=0$, for $i>m$. Then there is a choice of $\fa_0$-submodules $h_i$ so that $h_i$ is an $\fa_0$-stable complement to $^{i+1}R+ \rad^\flat{^iR}$ in $^iR$, and, additionally,
$\phi(h_i)\subseteq N_i$ ($i\in\mathbb Z$). The induced $\fa$-grading on $R$ gives $R$ an admissible hybrid
structure and $\phi$ becomes a surjective homomorphism of admissible hybrid $A$-modules.\end{prop}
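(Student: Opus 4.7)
The plan is to follow the proof of Proposition \ref{hybrid} essentially verbatim, by induction on $\dim R$, changing only the step that constructs the bottom complement, as directed by the hint. The base case $R=0$ is trivial. For the inductive step with $R \neq 0$, let $m_0$ be maximal with ${^{m_0}R}=R$; since ${^iR}=0$ for $i>m$, one has $m_0 \leq m$. Next, check that $({^{m_0+1}R},\, N_{\geq m_0+1},\, \phi|_{^{m_0+1}R})$ satisfies the revised hypotheses with the same integer $m$, so the inductive hypothesis produces $\fa_0$-stable complements $h_i$ for $i > m_0$ contained in the appropriate ${^iR}$.

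It remains to construct $h_{m_0}$. Set $S := \phi^{-1}(N_{m_0}) \cap {^{m_0}R}$. If $m_0 < m$, then ${^{m_0+1}R}$ is a proper submodule of ${^{m_0}R}$ and the original computation $\phi(S + {^{m_0+1}R}) = N_{m_0} + N_{\geq m_0+1} = N_{\geq m_0}$ applies verbatim. If $m_0 = m$, then ${^{m+1}R} = 0$, and the modified observation indicated in the hint takes its place:
$$\phi(S + \fa_{\geq 1}{^m R}) = N_m + \fa_{\geq 1}N_{\geq m} = N_m + N_{\geq m+1} = N_{\geq m} = \phi({^m R}),$$
where the key equality $\fa_{\geq 1}N_{\geq m} = N_{\geq m+1}$ is extracted from the hypothesis $\fa N_m = N_{\geq m}$ by comparing graded degrees. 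In either case $S$ together with the relevant radical exhausts ${^{m_0}R}$, and splitting $S$ onto the projective $\fa_0$-module $\hd^\flat({^{m_0}R}/{^{m_0+1}R})$---projective because $({^{m_0}R}/{^{m_0+1}R})|_\fa$ is $\fa$-projective---yields $h_{m_0}$ with $\phi(h_{m_0}) \subseteq N_{m_0}$. The concluding verification that the resulting $\fa$-grading makes $R$ admissible hybrid and $\phi$ a morphism of such is identical to the corresponding passage in Proposition \ref{hybrid}.

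The main issue demanding care is transporting the hypotheses through the inductive call when $m_0 < m$: one needs ${^i({^{m_0+1}R})}=0$ for $i>m$, the compatibility $\phi({^i({^{m_0+1}R})})=(N_{\geq m_0+1})_{\geq i}$ for $i \leq m$, and, most importantly, that $\fa N_m = N_{\geq m}$ passes to $\fa (N_{\geq m_0+1})_m = (N_{\geq m_0+1})_{\geq m}$. All three reduce to routine checks on restricted gradings. The actual conceptual novelty concentrates at the boundary case $m_0 = m$, which requires no inductive call at all, and is precisely where the new hypothesis $\fa N_m = N_{\geq m}$ is consumed through the modified computation above.
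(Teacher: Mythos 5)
Your proof is correct and follows essentially the same route as the paper's own sketch in Remark \ref{remgraded}, which explicitly defers details to the reader. You correctly locate the one place the new hypothesis $\fa N_m = N_{\geq m}$ is used---the boundary case ${^{m+1}R}=0$ (equivalently $m_0=m$), where the modified computation $\phi(S+\fa_{\geq 1}{^mR})=N_{\geq m}$ replaces the original $\phi(S+{^{m+1}R})=N_{\geq m}$---and you also carry out the verification, which the paper leaves implicit, that the revised hypotheses (including $\fa N_m = N_{\geq m}$) pass to the inductive call $({^{m_0+1}R},\,N_{\geq m_0+1},\,\phi|_{^{m_0+1}R})$ with the same fixed $m$.
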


\begin{cor}\label{revisedcorollary}  Let $N$ be an admissible hybrid $A$-module, and let $m\in\mathbb Z$
be such that $\fa N_m=N_{\geq m}$. Suppose, for each $s\leq m$,
there is given a projective $A$-module $^{\#s}R$ and a surjection
$$\phi_s:{^{\#s}R}\twoheadrightarrow  N_{\geq s}/N_{\geq s+1}.$$
Assume that $^{\#s}R|_\fa$ is projective and that $^{\#s}R=0$, for $|s|\gg 0$ (or
$s>m$).
Lift each $\phi_s$ in any way to an $A$-module homomorphism  $\phi_{\geq s}:{^{\#s}R}\to N_{\geq s}$. Put
$R:=\bigoplus_{s\in\mathbb Z}{^{\# s}R}$ and let $\phi:R\to N$ denote the sum of the maps $\phi_{\geq s}$.

Then $R$ has the structure of an admissible hybrid $A$-module in such a way that $\phi$ becomes a surjective homomorphism of admissible hybrid $\fa$-modules. In particular, if $E=\ker \phi$, then $E$
is admissible, and
$$0\to E\longrightarrow R\longrightarrow N\to 0$$
remains exact upon applying the functors $-|_\fa$ and $\wGr$, giving graded exact sequences in each case
(in $\fa$-grmod and $B$-grmod, respectively). The modules $R|_\fa$ and $\wGr R$ are
projective in $\fa$-grmod and $B$-grmod, respectively.
\end{cor}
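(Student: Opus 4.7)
The plan is to reduce this corollary to Proposition \ref{revisedproposition} in exactly the same way that Corollary \ref{corlast1} is reduced to Proposition \ref{hybrid}. The only genuinely new ingredient is the truncation at grade $m$, which is controlled by the hypothesis $\fa N_m = N_{\geq m}$.

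First I would set
$$^jR := \bigoplus_{s \geq j} {^{\# s}R} \subseteq R, \qquad j \in \mathbb{Z},$$
which gives a decreasing filtration of $R$ by $A$-submodules. Because $^{\# s}R = 0$ for $|s| \gg 0$ (in particular for $s > m$), we automatically have $^jR = 0$ for $j > m$ and $^jR = R$ for $j$ sufficiently small. Also, $^jR/{^{j+1}R} \cong {^{\# j}R}$, which is projective as an $A$-module and, by hypothesis, as an $\fa$-module. Next I would assemble the maps $\phi_{\geq s}$ into $\phi : R \to N$ and check that $\phi({^jR}) = N_{\geq j}$ for all $j \leq m$: this holds by a decreasing induction on $j$ starting from $j = m$, where the base case $\phi({^mR}) = N_{\geq m}$ uses precisely the hypothesis $\fa N_m = N_{\geq m}$ together with the surjectivity of $\phi_m : {^{\# m}R} \twoheadrightarrow N_{\geq m}/N_{\geq m+1} = N_m$, and the inductive step $\phi({^jR}) \supseteq N_{\geq j}$ follows from surjectivity of $\phi_j$ modulo $N_{\geq j+1}$.

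With these facts in hand, the hypotheses of Proposition \ref{revisedproposition} are satisfied, and so that proposition produces $\fa_0$-submodules $h_i \subseteq {^iR}$ assembling into an $\fa$-grading of $R$ with $\phi(h_i) \subseteq N_i$. This grading makes $R$ an admissible hybrid $A$-module and $\phi$ a surjective homomorphism of admissible hybrid $A$-modules. In particular, $E := \ker \phi$ inherits an admissible hybrid structure, and the sequence $0 \to E \to R \to N \to 0$ is exact in the category of admissible hybrid $A$-modules. Exactness is then preserved by the restriction functor $-|_{\fa}$ and by $\wGr$, since these are exact on admissible hybrid $A$-modules.

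Finally, for the projectivity assertion I would argue exactly as in Corollary \ref{corlast1}: the filtration of $R$ yields natural isomorphisms of graded $B$-modules
$$\wGr({^sR})/\wGr({^{s+1}R}) \cong (\wgr {^{\# s}R})\langle s\rangle, \qquad s \leq m,$$
and each summand on the right is projective both as a graded $\fa$-module and as a graded $B$-module because ${^{\# s}R}|_{\fa}$ is projective by hypothesis. The main (mild) obstacle is verifying that the truncation $^jR = 0$ for $j > m$ does not prevent $\phi$ from being surjective onto all of $N$; this is exactly what the condition $\fa N_m = N_{\geq m}$ buys us, so once this is checked the rest of the argument is a direct transcription of the proof of Corollary \ref{corlast1}.
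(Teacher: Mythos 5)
Your proof is correct and follows exactly the route the paper intends, reducing to Proposition~\ref{revisedproposition} via the filtration ${^jR} := \bigoplus_{s\geq j}{^{\# s}R}$ in precisely the way Corollary~\ref{corlast1} is reduced to Proposition~\ref{hybrid}. The one point worth making explicit in your base case is the small Nakayama-type step that it conceals: $\phi({^mR})$ is an $A$-submodule (hence $\fa$-stable) of $N_{\geq m}$ with $\phi({^mR}) + \fa_{\geq 1}N_{\geq m} = N_{\geq m}$, and since the hypothesis $\fa N_m = N_{\geq m}$ forces $N_{\geq m+1} = \fa_{\geq 1}N_{\geq m}$ while $\fa_{\geq 1}$ is nilpotent, Nakayama's lemma gives $\phi({^mR}) = N_{\geq m}$.
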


We reformulate these latter conclusions in part (a) of the proposition below. The hypotheses of
Corollary \ref{revisedcorollary} above are assumed, as they are in Theorem \ref{secondmain}.

\begin{prop} \label{revised} Let $N$ be an admissible hybrid $A$-module. Let $\phi:R\twoheadrightarrow N$ be the morphism of admissible hybrid $A$-modules constructed above, with $R=\bigoplus {^{\# s}}R$. (In particular, $R$ is $A$-projective.) Let $E=\ker(\phi)$ and form the exact sequence
\begin{equation}\label{sex}
0\to E\to R\to N\to 0\end{equation}
in the category of admissible hybrid $A$-modules, as in Corollary \ref{revisedcorollary}.

(a) The modules $R$, $R|_\fa$, $\wGr R$
are projective in $A$-mod, $\fa$-grmod, and $\wgr A$-grmod, respectively. Further, (\ref{sex}) gives rise
to three exact sequences
$$\begin{cases} 0\to E\to R\to N\to 0;\\ 0\to E|_\fa\to R|_\fa\to N|_\fa\to 0;
\\ 0\to \wGr E\to \wGr R\to \wGr N\to 0\end{cases}$$
in the categories $A$-mod, $\fa$-grmod, and $B$-grmod, respectively.

(b)  In addition, if $N|_\fa$ is semilinear of degree $m$ and if $\fa$ is Koszul (or just tight---generated by $\fa_0$ and $\fa_1$), then $E|_\fa$ is also semilinear, of degree $m+1$ (as is $\wGr E|_\fa\cong E|_\fa$).  Let $E',N'$ denote the maximal linear quotients of
$\wGr E$ and $\wGr N$ of degrees $m+1$ and $m$, respectively. Then there is an induced exact
sequence
\begin{equation}\label{sex2}0\to E'\to R'\to N'\to 0\end{equation}
in $B$-grmod. Here $R'=\wGr R/X$, where $X$ is the image in $\wGr R$ of $\ker(\wGr E\to E')$. Also, $R'|_\fa$ is projective
in $\fa$-grmod.
\end{prop}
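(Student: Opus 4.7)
My plan is to realize both parts of the proposition as direct consequences of the machinery already built in Corollary \ref{revisedcorollary} and Theorem \ref{mainSecThm}, so the proof is essentially an assembly job rather than a new construction.

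For part (a), I would first note that $R=\bigoplus_{s\in\mathbb Z}{^{\#s}R}$ is $A$-projective because each summand ${^{\#s}R}$ is assumed $A$-projective. The projectivity of $R|_{\fa}$ in $\fa$-grmod and of $\wGr R$ in $B$-grmod were already recorded in Corollary \ref{revisedcorollary} (via the identification $\wGr({^sR})/\wGr({^{s+1}R})\cong(\wgr{^{\#s}R})\langle s\rangle$ established in its proof). The three exact sequences then come for free: the first is the definition of $E$; the second follows because restriction to $\fa$ is exact; and the third follows from the ``giving graded exact sequences'' clause of Corollary \ref{revisedcorollary}, which asserts that $\wGr$ preserves exactness of $0\to E\to R\to N\to 0$ in this setting.

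For part (b), I would apply Theorem \ref{mainSecThm} to the data $(\fa\to B,\ \wGr R\twoheadrightarrow \wGr N)$. The hypotheses to check are (i) $\wGr N|_{\fa}\cong N|_{\fa}$ is semilinear of degree $m$, which is the assumption; (ii) $\wGr R$ is a graded $B$-module surjecting onto $\wGr N$ with $(\wGr R)|_{\fa}$ projective in $\fa$-grmod; both follow from part (a). (The tightness hypothesis on $\fa\to B$ is part of the setup of \S8.2, and the Koszul or tightly-graded hypothesis on $\fa$ just reinforces that semilinearity is preserved.) Theorem \ref{mainSecThm}(b) then gives that $\wGr E|_{\fa}$ is semilinear of degree $m+1$, and so $E|_{\fa}\cong(\wGr E)|_{\fa}$ is semilinear of degree $m+1$. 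Theorem \ref{mainSecThm}(c) identifies $E'=(\wGr E)^{\#\,m+1}$ and $N'=(\wGr N)^{\#\,m}$ as linear of degrees $m+1$ and $m$ (matching the ``maximal linear quotient'' description via Lemma \ref{syzyLem}).

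The short exact sequence $0\to E'\to R'\to N'\to 0$ and the projectivity of $R'|_{\fa}$ are then exactly the content of Theorem \ref{mainSecThm}(d)--(e): one builds $R'$ by dividing $\wGr R$ by the subobject that must die in order for $\wGr E$ to land in its maximal linear quotient $E'$. A short diagram chase in the three-row commutative diagram of Theorem \ref{mainSecThm}(e), with $\wGr R$ replacing $R$ throughout, reconciles the description $R'=\wGr R/X$, where $X$ is the image of $\ker(\wGr E\to E')$, with the presentation $P'=R^{\#m}/X^{m}$ used in the theorem, and delivers the required sequence in $B$-grmod. The main obstacle is notational rather than mathematical: making sure the passages between $(\wGr R)^{\#m}$, the full module $\wGr R$, and the various kernels that Theorem \ref{mainSecThm} tracks all line up with the formulation given here; the graded decomposition $\wGr R\cong\bigoplus_s(\wGr R)^{\#s}$, guaranteed by the projectivity established in (a), is what makes this matching routine.
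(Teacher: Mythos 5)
Your overall plan matches the paper's: part (a) falls out of Corollary \ref{revisedcorollary}, and part (b) is extracted from Theorem \ref{mainSecThm}. But there is a step you elide that needs to be made explicit before the application of Theorem \ref{mainSecThm} is legitimate.

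The conclusions (b)--(e) of Theorem \ref{mainSecThm} concern the \emph{projective cover} $R_{\mathrm{cov}}\twoheadrightarrow \wGr N$ in $B$-grmod and its kernel $E_{\mathrm{cov}}$, not the kernel of an arbitrary graded projective surjection. The module $\wGr R$ produced by Corollary \ref{revisedcorollary} is graded projective but need not be a cover of $\wGr N$: the summands $^{\#s}R$ are only assumed to be $A$-projective modules (with $^{\#s}R|_{\fa}$ projective) surjecting onto $N_{\geq s}/N_{\geq s+1}$, not projective covers. So ``Theorem \ref{mainSecThm}(b) then gives that $\wGr E|_{\fa}$ is semilinear of degree $m+1$'' is not an immediate reading of the theorem, and the phrase ``with $\wGr R$ replacing $R$ throughout'' in your treatment of diagram (e) is not licensed: the minimality of $R$ is used in the proof (e.g., to see $R$ as a $B$-direct summand of the given projective). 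The missing step is a Schanuel-type splitting. Since $\wGr R$ is projective in $B$-grmod and surjects onto $\wGr N$, the map factors as $\wGr R\twoheadrightarrow R_{\mathrm{cov}}\twoheadrightarrow \wGr N$ with the first arrow split, giving $\wGr R\cong R_{\mathrm{cov}}\oplus K$ and hence $\wGr E\cong E_{\mathrm{cov}}\oplus K$ with $K$ graded projective. Because $\wGr R$ is generated in grades $\leq m$ (this is where Remark \ref{remgraded} is used), so is its direct summand $K$; in particular $K^{\#\,m+1}=0$ and $K|_{\fa}$ is projective generated in grades $<m+1$. Hence $\wGr E|_{\fa}$ is semilinear of degree $m+1$ and $(\wGr E)^{\#\,m+1}=E_{\mathrm{cov}}^{\#\,m+1}$, so the degree-$(m+1)$ linear quotient $E'$ is unaffected by the discrepancy. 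This is exactly the splitting the paper performs explicitly in the proof of Theorem \ref{secondmain} (the paragraph introducing $P$, the split surjection $\wGr R\twoheadrightarrow P$, and the decomposition $\wGr E\cong F\oplus \ker(P\to\wGr N)$).

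Once that bridge is in place, your use of Theorem \ref{mainSecThm}(d) for the last assertion ($R'|_{\fa}$ projective) is the paper's intended argument, and the rest of your write-up is sound.
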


\begin{proof} Everything except the last assertion has been outlined in the Remark \ref{remgraded}. For that last assertion, the argument in the proof of Theorem \ref{mainSecThm}(d) may be used.\end{proof}

Before stating the second main theorem of this section, note that, for any admissible hybrid $A$-module $X$,
there is an obvious ungraded isomorphism
$$\hd^\flat(\wGr X)\cong\hd^\flat X \quad{\text{\rm in}}\,\,A/{\fa_{\geq 1} A}{\text{\rm --mod}}.$$
The algebra $A/\fa_{\geq 1}A$ is $(\wgr A)_0$, by definition. There is even a natural isomorphism
of graded $(\wgr A)_0$-modules
$$\hd^\flat(\wGr X)\cong\wGr(\hd^\flat X).$$

\begin{thm}\label{secondmain} Let $0\to E\to R\to N\to 0$ be as in (\ref{sex}) and let $V$ be any
$B_0=A/\fa_{\geq 1}A$-module.  Then the following statements hold.

(a) There is a natural isomorphism
\begin{equation}\label{natiso}
\coker(\Hom_A(R,V)\to\Hom_A(E,V))\cong\coker(\Hom_{B}(\wGr R,V)\to
\Hom_{B}(\wGr E,V)).\end{equation}

(b) If $\fa$ is tightly graded,  if $N|_\fa$ is semilinear of degree $m$, and if $0\to E'\to R'\to N'\to 0$ is as in
(\ref{sex2}), then there  is a natural isomorphism
$$\Hom_\fa(E',V)\cong\coker(\Hom_\fa(R,V)\to\Hom_\fa(E,V))$$
of vector spaces induced by the quotient maps $\wGr E\to E'$, $\wGr R\to R'$, together with the analogue of (\ref{natiso}) for $\fa$.
\end{thm}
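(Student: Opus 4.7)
My plan for (a) is to reduce both cokernels to a common $\Hom_{B_0}$ object via the flat head functor. Since $V$ is a $B_0$-module, any $A$-homomorphism $X \to V$ annihilates $\fa_{\geq 1}X$ and therefore factors uniquely through $\hd^\flat X = X/\fa_{\geq 1}X$, giving a natural isomorphism $\Hom_A(X,V) \cong \Hom_{B_0}(\hd^\flat X, V)$. Similarly, as $V$ is killed by $B_{\geq 1}$, $\Hom_B(\wGr X, V) \cong \Hom_{B_0}(\hd^\flat(\wGr X), V)$. Combining these with the natural isomorphism $\hd^\flat X \cong \hd^\flat(\wGr X)$ noted immediately before the theorem, applied functorially for $X = R$ and $X = E$, yields a commuting square of $\Hom_{B_0}$-groups whose horizontal rows are precisely the maps appearing in (\ref{natiso}). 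The isomorphism of cokernels is then immediate.

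For (b) the plan is to identify both sides with $\Ext^1_\fa(N', V)$. First, invoke the ``analog of (\ref{natiso}) for $\fa$,'' which is essentially tautological: since $R|_\fa \cong (\wGr R)|_\fa$ and $E|_\fa \cong (\wGr E)|_\fa$ as $\fa$-modules (a standard fact about admissible hybrid modules), the right-hand cokernel equals $\coker(\Hom_\fa(\wGr R, V) \to \Hom_\fa(\wGr E, V))$. By Proposition \ref{revised}(a), $\wGr R|_\fa$ is projective in $\fa$-grmod, so applying $\Hom_\fa(-,V)$ to the exact sequence $0 \to \wGr E \to \wGr R \to \wGr N \to 0$ identifies this cokernel with $\Ext^1_\fa(\wGr N, V)$. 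The semilinearity of $\wGr N|_\fa$ of degree $m$ splits off a projective summand, which contributes nothing to $\Ext^1_\fa$, reducing the calculation to $\Ext^1_\fa(N', V)$, where $N'$ is the maximal linear quotient of degree $m$.

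To complete the identification, Proposition \ref{revised}(b) supplies the short exact sequence $0 \to E' \to R' \to N' \to 0$, a linear $\fa$-projective presentation of $N'$ with $R'$ projective and $E'$ linear of degree $m+1$. Applying $\Hom_\fa(-,V)$ produces a four-term exact sequence terminating in $\Ext^1_\fa(N', V)$. The crucial observation is that the map $\Hom_\fa(R', V) \to \Hom_\fa(E', V)$ vanishes: since $V$ is killed by $\fa_{\geq 1}$, any $\fa$-homomorphism out of $R'$ annihilates $\fa_{\geq 1} R' = R'_{\geq m+1}$, which contains $E'$. Hence $\Hom_\fa(E', V) \cong \Ext^1_\fa(N', V)$, completing the chain of isomorphisms.

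The main subtlety, and the only point I expect to require real care, is naturality --- confirming that the composite isomorphism above genuinely is induced by the quotient maps $\wGr E \twoheadrightarrow E'$ and $\wGr R \twoheadrightarrow R'$ together with the $\fa$-analog of (\ref{natiso}), as the statement asserts. This means tracking the canonical composite $\Hom_\fa(E',V) \hookrightarrow \Hom_\fa(\wGr E, V) \twoheadrightarrow \coker$ (pullback through the quotient, then projection) and checking that it coincides with the $\Ext^1$-based identification above. Injectivity is transparent: any $f \in \Hom_\fa(E',V)$ whose pullback to $\wGr E$ lifts to some $g \in \Hom_\fa(\wGr R, V)$ forces $g$ to factor through $R' = \wGr R/X$ by construction of $X$, after which the degree argument shows $g|_{E'} = 0$, so $f = 0$. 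Surjectivity then follows either by matching dimensions (via the $\Ext^1$ computation) or, more directly, by extending homomorphisms from the projective summand of $\wGr E|_\fa$ to $\wGr R$.
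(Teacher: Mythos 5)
Part (a) of your proposal is exactly the paper's argument: factor $\Hom_A(X,V)$ and $\Hom_B(\wGr X,V)$ through the flat head, use $\hd^\flat X\cong\hd^\flat(\wGr X)$ functorially, and compare the resulting commuting squares. Nothing to add there.

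For part (b), your argument is correct but takes a genuinely different route from the paper. You identify all three quantities with $\Ext^1_\fa(N',V)$: the original cokernel equals $\Ext^1_\fa(\wGr N,V)$ because $\wGr R|_\fa$ is projective; the projective summand of the semilinear module $\wGr N|_\fa$ contributes nothing to $\Ext^1$; and $\Hom_\fa(E',V)\cong\Ext^1_\fa(N',V)$ since $\Hom_\fa(R',V)\to\Hom_\fa(E',V)$ vanishes (as $E'\subseteq R'_{\geq m+1}\subseteq\fa_{\geq 1}R'$ and $V$ is killed by $\fa_{\geq 1}$). You then recover naturality by showing the quotient-induced map $\Hom_\fa(E',V)\to\coker$ is injective directly (tracking that $g$ factors through $R'=\wGr R/X$ and restricts correctly to $E'$) and invoking the dimension count. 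The paper instead avoids $\Ext^1$ entirely: it manufactures a projective cover $P\twoheadrightarrow\wGr N$, splits $\wGr R\twoheadrightarrow P$ with kernel $F$, decomposes $\wGr E\cong F\oplus\ker(P\to\wGr N)$, invokes the uniqueness in Theorem \ref{mainSecThm}(d) to obtain the commutative diagram of short exact sequences with surjective verticals, and then observes the bottom complex is the direct sum of the top one and a split-exact complex of projectives, from which the cokernel identification is immediate after applying $\Hom_\fa(-,V)$. The paper's diagrammatic approach makes the naturality of the final isomorphism transparent with no dimension count; your $\Ext^1$ approach is conceptually cleaner but makes naturality an explicit verification. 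Both arguments share the key vanishing step: the paper phrases it via $\hd^\flat P'$ living in pure grade $m$ while $\hd^\flat E'$ lives in pure grade $m+1$, and you phrase it via $E'\subseteq\fa_{\geq 1}R'$; these are the same observation. One small remark: the containment $E'\subseteq\fa_{\geq 1}R'$ that you use follows just from $R'$ being generated in grade $m$ and $E'$ vanishing in grades $\leq m$, with no actual appeal to tight grading, though tightness is of course in the hypotheses anyway.
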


\begin{proof} Assertion (a)  is a consequence of the natural isomorphisms
$$\begin{aligned} \Hom_A(X,V) &\cong \Hom_{A/\fa_{\geq 1} A}(\hd^\flat X,V)\\
&\cong(\Hom_{(\wgr A)_0}(\hd^\flat X,V)\\
&\cong\Hom_{B}(\wGr X,V),\end{aligned}$$
for all admissible hybrid $A$-modules $X$.

For (b), apply Theorem \ref{mainSecThm} (though not with the same notation). First, as above,
$$\ck(\Hom_\fa(R,V)\to\Hom_\fa(E,M))\cong\ck(\Hom_\fa(\wGr R,V)\to\Hom_\fa(\wGr E,M)).$$
Next, we construct a projective cover  $P\twoheadrightarrow \wGr N$ in $\wgr A$-grmod, as in Remark
\ref{discussion}, with $P=P(\hd^\flat\wGr N)$. Note that $P=\bigoplus_{s\in\mathbb Z} P^{\# s}$ by
construction, using the notation of  Remark \ref{discussion}, except that $P^{\# s}$ is
the projective cover of $(\hd^\flat \wGr N)_s\cong(\wGr \hd^\flat N)_s$. Recall that $R$,
constructed as in Remark \ref{remgraded}, is generated in grades $\leq m$ over $\fa$, as is $\wGr R$ (over
$\fa$ or over $\wgr A$). The surjection $\wGr R\twoheadrightarrow \wGr N$ lifts to a split surjection
$\wGr R\twoheadrightarrow P$ in $\wgr A$-grmod. Let $F$ be its kernel. Standard diagram arguments
show that $\wGr E\cong F\oplus \ker(P\to\wGr N)$ in $\wgr A$-grmod (and in $\fa$-grmod, consequently).
By Theorem \ref{mainSecThm}, $\ker(P\to\wGr N)$ is semilinear of degree $m+1$. Clearly, $F|_\fa$ is projective in
$\fa$-grmod and is generated in grades $\leq m$ (properties inherited from $\wGr R$). Therefore,
$\ker(P\to\wGr N)|_\fa$ and $(\wGr E)|_\fa\cong E|_\fa$ share the same maximal quotient $E'$ which is linear
of degree $m+1$. Also, $E'$ carries the same $\wgr A$-module structure from $\wGr R$ as from $\ker(P\to\wGr N)$. Theorem \ref{mainSecThm} guarantees that there is, up to isomorphism, a unique exact sequence
$$0\to E'\to P'\to N'\to 0
\quad {\text{\rm in}}\,\,\wgr A{\text{\rm --grmod}},$$
with $N'$ the degree $m$ maximal linear quotient of $\wGr N$, $P'$ an object
in $\wgr A$-grmod with $P|_\fa$ projective. The commutative diagram in Theorem \ref{mainSecThm}(a) may now
be used to produce a commutative diagram
$$\begin{CD} 0 @>>> E' @>>> P' @ >>> N' @>>> 0\\
@.  @AAA @ AAA @ AAA @.\\
0 @>>> \wGr E @>>> \wGr R @>>> \wGr N @>>> 0\end{CD}$$
 in $\wgr A$-mod, with exact rows and with all vertical maps surjective. Both $\wGr R|_\fa$ and $P'|_\fa$
 are projective in $\fa$-grmod, and $P$ is (consequently) generated in grades $\leq m$. So $P'_{\geq m+1}
 \subseteq\fa_{\geq 1}P'$. Thus,
 $$\begin{aligned}\ck(\Hom_\fa(P',V)\to\Hom_\fa(E',V)) &\cong\ck(\Hom_\fa(\hd^\flat(P',V)\to\Hom_\fa(\hd^\flat E',V))\\
& \cong \Hom_\fa(\hd^\flat E',V).\end{aligned}$$

 Finally, it is clear that the complex consisting of the bottom row is the direct sum of a complex of
 projective modules in $\fa$-grmod. So there is a natural isomorphism
 $$\ck(\Hom_\fa(P',V)\to\Hom_\fa(E',V))\cong\ck(\Hom_\fa(\wGr R,V)\to\Hom_\fa(\wGr E,V)).$$
 Together with the identifications previously noted. This proves the second assertion of the theorem.
 \end{proof}

\begin{rem}\label{lastlastremark} The theory resulting from Proposition \ref{revised} and Theorem \ref{secondmain} is a recursive
one, for the purpose of building resolutions one step at a time.  The recursive design is made necessary by the
hypothesis, appearing in Corollary \ref{revisedcorollary} (and earlier), that requires sufficient projective $A$-modules exist with
projective restrictions to $\fa$ to be able to make the constructions. In the situations we must deal with in 
Section 5, this generality requires enlarging the algebra $A$. We refer the reader to the proofs of Theorem \ref{nextmainresult} and Theorem \ref{GExt} for  cases where this can be done successfully, the latter requiring results
of this section only through Corollary \ref{corlast1}. The algebras $A$ are various $A_\Gamma$'s and $B=\wgr A$. The algebra $\fa$ is introduced in \S2.5. The hypothesis in Theorem \ref{mainSecThm} that $B$ is
tight over $\fa$ follows from the definition of $\wgr A$ and the tightness of $\wA$ over $\wfa$.   \end{rem}

  \section{Appendix II: vanishing of Tor} This appendix proves the following general fact about integral quasi-hereditary
algebras. Let $\wA$ be an
integral quasi-hereditary  over $\sO$ with poset $\Lambda$. Recall that given $\lambda\in\Lambda$, $\wDelta(\lambda)$
is the standard {\it left} module defined by $\lambda$. Let $\wDelta(\lambda)^\circ$ be the standard
{\it right} module defined by $\lambda$.

\begin{prop}\label{appendixprop} Let $\wA$ be a split quasi-hereditary algebra over $\sO$ with weight poset $\Lambda$. For $\lambda,\mu\in
\Lambda$, we have
$$\Tor^\wA_n(\wDelta(\lambda)^\circ,\wDelta(\mu))\cong\begin{cases} \sO\quad {\text{\rm when $n=0$ and $\lambda=\mu$}};\\
0 \quad{\text{\rm otherwise}}.\end{cases} \quad\forall n\in{\mathbb N}.$$
A similar result holds when $\sO$ is replaced by a field.
\end{prop}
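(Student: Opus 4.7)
My plan is to proceed by induction on $|\Lambda|$. The base case $|\Lambda|=1$ is immediate: split quasi-heredity forces $\wA$ to be Morita equivalent to $\sO$, so that (up to the equivalence) $\wDelta(\lambda)^\circ\otimes_\wA\wDelta(\lambda)\cong\sO$ and all higher Tors vanish by projectivity.

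For the inductive step, I would choose a maximal weight $\lambda_0\in\Lambda$ with primitive idempotent $e_0$, and let $J=\wA e_0\wA\cong\wA e_0\otimes_{e_0\wA e_0}e_0\wA$ be the associated heredity ideal, with $e_0\wA e_0\cong\sO$ by splitness. Since $\lambda_0$ is maximal, I identify $\wDelta(\lambda_0)=\wA e_0$ and $\wDelta(\lambda_0)^\circ=e_0\wA$, both projective on their respective sides, and $\wA/J$ is split quasi-hereditary with poset $\Lambda\setminus\{\lambda_0\}$, its standard modules at $\nu\ne\lambda_0$ agreeing with those for $\wA$. If $\mu=\lambda_0$ (the case $\lambda=\lambda_0$ is symmetric), then $\wDelta(\mu)$ is projective, so $\Tor^\wA_n$ vanishes for $n\geq 1$; while $\wDelta(\lambda)^\circ\otimes_\wA\wA e_0=\wDelta(\lambda)^\circ e_0$ equals $e_0\wA e_0\cong\sO$ when $\lambda=\lambda_0$, and otherwise vanishes because maximality of $\lambda_0$ forces all composition factors of $\wDelta(\lambda)^\circ$ to have weight $\ne\lambda_0$.

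If $\lambda,\mu\ne\lambda_0$, I would exploit the length-one projective resolution $0\to J\to\wA\to\wA/J\to 0$ of $\wA/J$. Since $\lambda_0$ is not a weight of $\wDelta(\mu)$, one has $J\otimes_\wA\wDelta(\mu)\cong\wA e_0\otimes_{e_0\wA e_0}e_0\wDelta(\mu)=0$, and therefore $\Tor^\wA_q(\wA/J,\wDelta(\mu))=0$ for all $q\geq 1$. Consequently, for any projective $\wA$-resolution $P_\bullet\to\wDelta(\mu)$, the complex $(\wA/J)\otimes_\wA P_\bullet$ is a projective $\wA/J$-resolution of $\wDelta(\mu)$, and because $\wDelta(\lambda)^\circ\otimes_{\wA/J}((\wA/J)\otimes_\wA P_\bullet)=\wDelta(\lambda)^\circ\otimes_\wA P_\bullet$, I obtain $\Tor^\wA_n(\wDelta(\lambda)^\circ,\wDelta(\mu))\cong\Tor^{\wA/J}_n(\wDelta(\lambda)^\circ,\wDelta(\mu))$, at which point the inductive hypothesis closes the argument. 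The parallel statement over a field is handled by the same argument with $\sO$ replaced by $k$.

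The main thing to check carefully is the integral bookkeeping: the identification $J\cong\wA e_0\otimes_{e_0\wA e_0}e_0\wA$ and the left $\wA$-projectivity of $J$ reduce to the flatness of $e_0\wA$ over $e_0\wA e_0\cong\sO$, which is automatic because $e_0\wA$ is an $\sO$-lattice. Analogous care is needed for the inheritance of split quasi-heredity by $\wA/J$ with the smaller poset and the matching of its standard modules with the $\wDelta(\nu)$ for $\nu\ne\lambda_0$. All of these are standard features of split quasi-hereditary orders, and I would invoke \cite{CPS-1,CPS1a} for the precise integral framework rather than reprove them here.
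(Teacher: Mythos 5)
Your proposal is correct, but takes a genuinely different route from the one in the paper. The paper first reduces to the case of a quasi-hereditary algebra over a field (citing an argument in the style of \cite{CPS7}), then replaces the poset by a linear refinement, and handles $n=0$ by a direct idempotent computation and $n>0$ by a decreasing induction over the poset using the $\Delta$-filtration of the projective indecomposable $P(\mu)$ together with the long exact sequence in $\Tor$. You instead induct on $|\Lambda|$ directly over $\sO$: split off the heredity ideal $J=\wA e_0\wA$ at a maximal weight, dispose of the cases with $\lambda_0\in\{\lambda,\mu\}$ using projectivity of $\wDelta(\lambda_0)$ and $\wDelta(\lambda_0)^\circ$, and otherwise use $J\otimes_\wA\wDelta(\mu)=0$ to show that applying $(\wA/J)\otimes_\wA-$ carries a projective $\wA$-resolution of $\wDelta(\mu)$ to a projective $\wA/J$-resolution, which collapses $\Tor^\wA_n$ to $\Tor^{\wA/J}_n$. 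Your route buys you two things the paper's does not explicitly address: you never need to pass to the residue or fraction field (so the integral statement over $\sO$ is proved directly, rather than via a reduction whose details are deferred to a reference), and you never need to linearize the poset, so the incomparable case $\lambda\not\leq\mu$, $\mu\not\leq\lambda$ is handled uniformly. The price is that you must invoke the structural facts about split quasi-hereditary $\sO$-orders (the form $J\cong\wA e_0\otimes_{e_0\wA e_0}e_0\wA$ with $e_0\wA e_0\cong\sO$, projectivity of $J$ on both sides, and inheritance of split quasi-heredity by $\wA/J$), which you correctly note are contained in the integral framework of \cite{CPS1a}; these are exactly the ingredients the paper leans on implicitly in its ``without loss'' step for $n=0$. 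Both arguments are short and standard; yours is arguably the more self-contained as a statement over $\sO$.
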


\begin{proof}One can reduce easily to the case of a quasi-hereditary algebra $A$ over a field $F$.
(The argument is similar to that used in \cite[p. 5243]{CPS7}.) For
convenience, we assume that the poset $\Lambda$ is linear.

 First, consider the case $n=0$.  We assume
that $\lambda\geq \mu$ (and leave the other case to the reader). Without loss, we can replace
$A$ by a quotient quasi-hereditary $B$ with $\lambda$ maximal in the poset of $B$. Thus, $\Delta(\lambda)^\circ$
is a projective indecomposable module $eB$ with $e$ a primitive idempotent. Then $\Delta(\lambda)^\circ
\otimes_B\Delta(\mu)=eB\otimes_B\Delta(\mu)\cong e\Delta(\mu)$. If $\lambda\not=\mu$, then $e\Delta(\mu)=0$.
If $\lambda=\mu$, then $e\Delta(\mu)=eBe=F$, since $B$ is split. This completes the proof when
$n=0$.

Now assume that $n>0$. Again, we treat only the case $\mu\geq\lambda$, leaving the other
case to the reader. Then
then the projective indecomposable $A$-module $P(\mu)$ has a $\Delta$-filtration with
top section $\Delta(\lambda)$ and lower sections of the form $\Delta(\tau)$, for $\tau>\mu$. Now
an evident induction on $\mu$ completes the proof. (Observe the assertion is trivial if $\mu$ is
maximal.)
\end{proof}

 \end{document}